\newcommand{\N}{\mathbb{N}}
\newcommand{\Z}{\mathbb{Z}}
\newcommand{\eps}{\varepsilon}
\newtheorem{theorem}{Theorem}
\newtheorem{corollary}[theorem]{Corollary}
\newtheorem{lemma}[theorem]{Lemma}
\begin{document}

\title{\textbf{Constructing Triangle Decomposable Multigraphs with Minimum Multi-edges}}
\author{C. M. Mynhardt\footnote{Funded by a Discovery Grant from the Natural Sciences and Engineering Research Council of Canada
} and A. K. Wright\footnote{Based on work done for an Honours Project at the University of Victoria} \\Department of Mathematics and Statistics\\University of Victoria, Victoria, BC, \textsc{Canada}}
\date{}
\maketitle

\begin{abstract}
\noindent
We study triangle decompositions of graphs. We consider constructions of classes of graphs where every edge lies on a triangle and the addition of the minimum number of multiple edges between already adjacent vertices results in a strongly triangle divisible graph that is also triangle decomposable. We explore several classes of planar graphs as well as a class of toroidal graphs. 
\end{abstract}

\section{Introduction}

The study of triangle decompositions falls under the larger umbrella of graph decompositions. Graph decompositions involve partitioning the edge set of a graph into subsets that induce a predefined type of subgraph. The research into other types of decompositions of graphs, including other cliques, bipartite graphs, and trees, is vast. A graph $G$ that is decomposable into subgraphs $H$ can also be thought of as a graph generated by $H$. Formally, a graph or multigraph is said to be \emph{triangle decomposable} if its edge set can be partitioned such that the subgraph induced by each set in the partition is isomorphic to $K_3$. \\

The study of triangle decompositions is rather old, beginning with the following problem from W.S.B Woolhouse in 1844 (\cite{woolhouse} as seen in \cite{kieka}): 
\begin{addmargin}[1em]{2em}
\center{\small{``Determine the number of combinations that can be made of $n$ symbols, $p$ symbols in each; with this limitation, that no combination of $q$ symbols which may appear in any one of them shall be repeated in any other." }}
\end{addmargin}
Structures of the above type where $p = 3$ and $q = 2$ are known as \emph{Steiner triple systems}, named after the Swiss mathematician Jacob Steiner. The existence of these systems implies that complete graphs of order $n$, where $n \equiv 1, 3 \text{ (mod 3)}$, are triangle decomposable.   \\

In 1968 Folkman, as cited in \cite{Spencer}, discussed the desirability of characterizing triangle decomposable graphs. However, in 1970 Nash-Williams \cite{nash} mentioned that such a characterization would be difficult because of its connection to the Four Colour Conjecture, now known as the Four Colour Theorem. The Four Colour Theorem states that all planar graphs can be properly coloured using at most four colours. The Four Colour Theorem was eventually proved in 1976 by Appel and Haken, using computer aided methods \cite{appel}. The simplest proof to date was found in 2005 by Georges Gontheir, also using computer aided methods. \\

The Four Colour Theorem is equivalent to the statement that every bridgeless planar cubic graph is 3-edge colourable (see Theorem 9.12 in \cite{Bondy}). This is because the dual of any maximal planar graph of order 3 or more is a bridgeless planar cubic graph. Suppose $G$ is a bridgeless planar cubic graph that is 3-edge colourable. Then it follows that the join of $G$ and $\overline{K_3}$ is triangle decomposable, that is $H \cong G \vee \overline{K_3}$ is triangle decomposable. \\

\begin{figure}[h]
\centering
\begin{tikzpicture}
\hspace{-6cm}
\begin{scope}
[scale=.75,auto=right,every node/.style={circle,fill=gray!30},nodes={circle,draw, minimum size=.01cm}]
\foreach \lab/\ang in{1/90,2/150,3/210,4/270,5/330,6/30}
\node(\lab)at(\ang:3){};
\node(7)at(0,0){};
\draw[blue, very thick](1) -- (7);
\draw[red, very thick](1) -- (3);
\draw[green, very thick](1) -- (5);
\draw[red, very thick](5) -- (7);
\draw[blue, very thick](5) -- (3);
\draw[green, very thick](7) -- (3);
\draw(1) -- (2) -- (3) -- (4) -- (5) -- (6) -- (1);
\draw(2) -- (7);
\draw(4) -- (7);
\draw(6) -- (7);
\draw(6) to[in=50, out =190] (3);
\draw(2) to[in=130, out =-10] (5);
\draw(1) to[out=290, in =70] (4);
\end{scope}
\hspace{3cm}
$\rightarrow$
\hspace{3cm}
\begin{scope}
[scale=.75,auto=right,every node/.style={circle,fill=gray!30},nodes={circle,draw, minimum size=.01cm}]
\foreach \lab/\ang in{1/90,2/150,3/210,4/270,5/330,6/30}
\node(\lab)at(\ang:3){};
\node(7)at(0,0){};
\draw[blue, very thick](1) -- (7);
\draw[red, very thick](1) -- (3);
\draw[green, very thick](1) -- (5);
\draw[red, very thick](5) -- (7);
\draw[blue, very thick](5) -- (3);
\draw[green, very thick](7) -- (3);
\draw[very thick, magenta](1) -- (2);
\draw[very thick, magenta](2) -- (3);
\draw[very thick, blue](3)-- (4) -- (5);
\draw[very thick, green](5) -- (6) -- (1);
\draw[very thick, red](2) -- (7);
\draw[very thick, cyan](4) -- (7);
\draw[very thick, olive](6) -- (7);
\draw[very thick, olive](6) to[in=50, out =190] (3);
\draw(2)[very thick, red] to[in=130, out =-10] (5);
\draw[very thick, cyan](1) to[out=290, in =70] (4);
\end{scope}
\hspace{3cm}
$\rightarrow$
\hspace{3cm}
\begin{scope}
[scale=.75,auto=right,every node/.style={circle,fill=gray!30},nodes={circle,draw, minimum size=.01cm}]
\foreach \lab/\ang in{1/90,2/150,3/210,4/270,5/330,6/30}
\node(\lab)at(\ang:3){};
\node(7)at(0,0){};
\draw[cyan, very thick](1) -- (7);
\draw[magenta, very thick](1) -- (3);
\draw[green, very thick](1) -- (5);
\draw[red, very thick](5) -- (7);
\draw[blue, very thick](5) -- (3);
\draw[olive, very thick](7) -- (3);
\draw[very thick, magenta](1) -- (2);
\draw[very thick, magenta](2) -- (3);
\draw[very thick, blue](3)-- (4) -- (5);
\draw[very thick, green](5) -- (6) -- (1);
\draw[very thick, red](2) -- (7);
\draw[very thick, cyan](4) -- (7);
\draw[very thick, olive](6) -- (7);
\draw[very thick, olive](6) to[in=50, out =190] (3);
\draw(2)[very thick, red] to[in=130, out =-10] (5);
\draw[very thick, cyan](1) to[out=290, in =70] (4);
\end{scope}
\end{tikzpicture}
\caption{An example of a triangle decomposition of $G \vee \overline{K_3}$ where $G \cong K_4$. }
\end{figure}
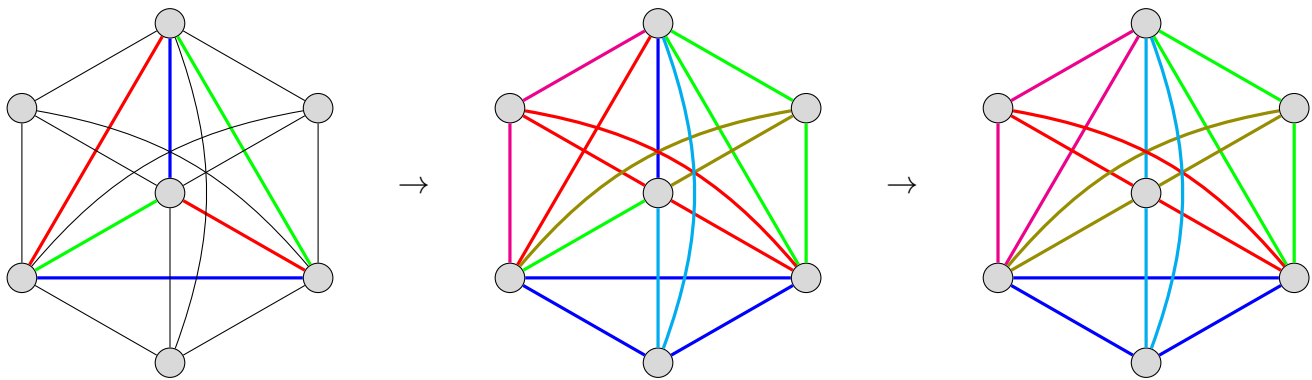

Generally, if $G$ is a bridgeless planar cubic graph then vertices $u, v, w$ can be added to $G$ such that $u, v, w$ are all adjacent to every vertex in $G$ but not each other to form a new graph $H$. A triangle decomposition of $H$ can be formed by choosing, for each edge  $e$ in $G$, one of the vertices $u, v, w$ and colouring the two edges that connect $e$ to the chosen vertex the same colour as that of $e$. The 3-edge colouring of $G$ ensures this method produces a triangle decomposition of $H$. See Figure 1. \\

This connection with the Four Colour Theorem prompted Nash-Williams to look at a  less ambitious problem by instead examining a ``reasonable" graph class $\mathcal{C}$, in which every graph is triangle decomposable. A graph that is Eulerian, has an edge set with cardinality divisible by 3, and all edges of the graph are contained in at least one triangle, is said to be \emph{strongly $K_3$-divisible}. Nash-Williams conjectured such a class $\mathcal{C}$ consists of graphs $G$ that are all sufficiently large, are strongly $K_3$-divisible and have minimum degree $\delta \geq \frac{3}{4}|V(G)|$. The existence of this class is known as the Nash-Williams conjecture. This conjecture is one of the principal parts of the study of triangle decompositions, as it remains open. The best known result comes from Delcourt and Postle \cite{postle} and is as follows: 
\begin{addmargin}[1em]{2em}
\begin{center}\small{``Let $G$ be a $K_3$-divisible graph with $n$ vertices and minimum degree $\delta(G) \geq (\frac{7+\sqrt{21}}{14} + \epsilon)n$. If $n$ is sufficiently large, then $G$ has a triangle decomposition for any $\epsilon > 0$." }
\end{center}
\end{addmargin}
Results aiming to prove the Nash-Williams conjecture or improve the lower bound for $\delta$ dominate much of the study of triangle decompositions. This project will not go into further detail regarding these probabilistic problems but they are worth discussing due to their importance to the topic. \\

Niezen in 2020 \cite{joey} posed a question regarding the assignment of edge multiplicities of graphs in order to create multi-graphs with triangle decompositions. Niezen noted that one simple case is the assignment of edge multiplicities  $\{0,1,...{v \choose 2} -1\}$ to a complete graph $K_v$, as this problem is equivalent to the existence of Sarvate-Beam triple systems. Niezen stated that, for the graph $K_v$, the resulting graph, when adding multiplicities $\{0,1,...{v \choose 2} -1\}$, contains a triangle decomposition as long as $v \equiv 0, 1 \text{ (mod 3)}$.  When looking at a general graph $G$, the question of what edge multiplicities can result in a triangle decomposable graph remains. This project seeks to explore this question by finding constructions of graphs such that adding the minimum number of edges to a graph $G$ to form a strongly $K_3$-divisible multigraph $H$ results in $H$ being triangle decomposable. \\

\section{Notation, tools, and outline}

In 2015, Mynhardt and Van Bommel \cite{kieka} found a necessary and sufficient condition for triangle decomposable planar graphs. They looked at the \emph{depletion} of multigraphs, which are graphs where some number of duplicate and faceless triangles are deleted. A triangle is \emph{faceless} if no plane embedding exists where the triangle is a face of the graph. 

\begin{theorem}[Theorem 1 in \cite{kieka}]
A planar multigraph G is triangle decomposable if and only if some depletion $G_{\Delta}$ of $G$ has a plane embedding whose dual is a bipartite multigraph in which all vertices of some partite set have degree $3$. 
\end{theorem}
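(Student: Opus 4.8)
The plan is to prove both implications via the classical correspondence between proper $2$-colourings of the faces of a plane graph and bipartiteness of its dual. Recall that in any plane embedding each edge borders exactly two faces, and that the degree of a vertex of the dual equals the boundary length of the corresponding face. Hence a bipartite dual means precisely that the faces can be properly $2$-coloured, say black and white, so that every edge separates a black face from a white face, and the requirement that one partite set has all degrees equal to $3$ says that every face of one colour class (say black) is a triangle. This reformulation is what I would carry through both directions.

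For sufficiency ($\Leftarrow$), suppose a depletion $G_\Delta$ admits such an embedding. Since the dual is bipartite, every edge of $G_\Delta$ lies on exactly one black face (bipartiteness also forbids a bridge, whose dual would be a loop, consistent with every edge lying on a triangle). As each black face is a triangle, the black faces therefore partition $E(G_\Delta)$ into triangles, which is a triangle decomposition of $G_\Delta$. It then remains to reattach the duplicate and faceless triangles that were removed to form the depletion; each is itself a copy of $K_3$, so adjoining them as further blocks extends the decomposition to all of $G$.

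Necessity ($\Rightarrow$) is the heart of the statement and where I expect the difficulty. Beginning with an arbitrary triangle decomposition of $G$, the aim is to delete precisely the blocks that obstruct realising every block simultaneously as a face — the faceless triangles, which separate the graph and so can bound a face in no embedding, together with suitable duplicate triangles sharing a common vertex triple — and to show that the surviving blocks form a depletion $G_\Delta$ carrying a plane embedding in which each surviving block is a face. The main obstacle is this embedding step: I would need to argue that once the obstructing blocks are removed, the remaining triangular blocks are pairwise compatible, so that an embedding exists in which every one of them bounds a face and no stray bounded region is left over on their edges.

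Granting such an embedding, I would finish by colouring each surviving block black. Because the surviving blocks decompose $G_\Delta$, each edge lies on exactly one black face, so black faces meet only white faces across edges; this is a proper $2$-face-colouring, equivalently a bipartite dual. Every black face is a triangle by construction, so the black partite set of the dual has all degrees $3$, yielding the stated condition. The crux of the whole argument is the claim that ``faceless'' and ``duplicate'' are exactly the two obstructions to simultaneous face-realisability, making their removal both necessary for the embedding to exist and harmless for the decomposition.
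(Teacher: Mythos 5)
Your ($\Leftarrow$) direction is correct and complete, and it is the standard argument: bipartiteness of the dual is equivalent to a proper $2$-colouring of the faces (and excludes bridges, whose dual edges are loops), the degree-$3$ partite set makes every black face a triangle, each edge of $G_{\Delta}$ borders exactly one black and one white face, so the black faces partition $E(G_{\Delta})$ into triangles, and since each deleted duplicate or faceless triangle is itself a copy of $K_3$, reattaching them extends the decomposition to all of $G$. Note that the paper you are working from does not prove this statement at all --- it quotes it as Theorem 1 of Mynhardt and Van Bommel \cite{kieka} --- so the only proof to measure against lives in that source, and your sufficiency half does match its face-colouring mechanism.

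The ($\Rightarrow$) direction, however, contains a genuine gap, which you have in effect conceded yourself: everything after ``Granting such an embedding'' is conditional on the single claim that carries the whole content of the theorem, namely that after deleting suitable duplicate and faceless blocks of a given triangle decomposition, the surviving blocks can be realised \emph{simultaneously} as faces of one plane embedding of the depletion. Nothing in your sketch establishes this, and the two justifications you gesture at do not suffice. First, ``faceless triangles separate the graph'' reverses the logic: facelessness is \emph{defined} by bounding a face in no embedding, it does not coincide with vertex-separation in multigraphs of low connectivity, and in any case facelessness of a triangle in $G$ is neither obviously preserved nor reflected in the depleted graph once other blocks have been removed, so ``delete the faceless ones and the rest are fine'' is not a stable induction. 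Second, individual face-realisability of each block does not give joint realisability: whether a block bounds a face is a global property of the embedding (already in a doubled triangle, both blocks are faces only in embeddings that nest the digons between them, and in larger graphs drawing one block as a face can force vertices into the interior of another). Closing the gap requires an actual mechanism --- for instance an extremal or inductive choice of a depletion and embedding maximising the number of blocks realised as faces, with a case analysis showing any non-facial block can either be made facial by re-embedding or is duplicate/faceless and may legitimately be deleted --- and that mechanism, the heart of the theorem, is precisely what your proposal assumes rather than proves.
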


The above theorem has the following corollary, which we will frequently use in our constructions as it is quite simple to check whether a graph is Eulerian. 
\begin{corollary}[Corollary 5 in \cite{kieka}]
A maximal planar graph is triangle decomposable if and only if it is Eulerian. 
\end{corollary}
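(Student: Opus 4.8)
The plan is to prove the two implications separately, handling the forward direction (triangle decomposable $\Rightarrow$ Eulerian) by an elementary counting argument that requires neither planarity nor maximality, and reserving Theorem 1 for the reverse direction (Eulerian $\Rightarrow$ triangle decomposable), where the maximality hypothesis does the real work. Throughout I would use that a maximal planar graph is connected (indeed $3$-connected for order $n \ge 4$), so that "Eulerian" reduces to "every vertex has even degree" and the plane embedding, hence the dual, is well defined.

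For the forward direction I would fix a triangle decomposition $\mathcal{T}$ of $G$ and consider an arbitrary vertex $v$. Since $\mathcal{T}$ partitions $E(G)$ and each triangle of $\mathcal{T}$ containing $v$ uses exactly two of the edges incident with $v$, the edges at $v$ are partitioned into pairs, so $\deg(v)$ is even. As $G$ is connected with all degrees even, it is Eulerian. (The divisibility of $|E(G)|$ by $3$ is automatic, being a disjoint union of triangles.) This argument is completely general and shows that the entire content of the corollary lies in the converse.

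For the reverse direction I would exploit two standard consequences of plane duality applied to the (essentially unique) embedding of $G$. First, maximality forces every face of $G$ — including the outer face — to be a triangle, so each vertex of the dual $G^*$ records exactly three bounding edges and $G^*$ is $3$-regular. Second, via $(G^*)^* = G$ the length of each face of $G^*$ equals the degree of the corresponding vertex of $G$; hence $G$ Eulerian makes every face of $G^*$ have even length, which for a connected plane graph is equivalent to $G^*$ being bipartite (the face boundaries generate the cycle space over $\mathrm{GF}(2)$). Combining these, an Eulerian maximal planar $G$ has a dual that is a cubic bipartite multigraph, so both partite sets consist entirely of degree-$3$ vertices. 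Taking the trivial depletion $G_{\Delta} = G$, with no duplicate or faceless triangles removed, the hypothesis of Theorem 1 is satisfied, and therefore $G$ is triangle decomposable.

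The main obstacle is the bookkeeping in the reverse direction: verifying that "all faces triangular" translates to a cubic dual and that "Eulerian" translates to "bipartite dual" through the face-length/vertex-degree correspondence, and then observing that a cubic bipartite dual satisfies Theorem 1's requirement that \emph{some} partite set have all vertices of degree $3$ without any depletion being needed. I would also check the base case $n = 3$ separately, where $G \cong K_3$ is trivially both Eulerian and triangle decomposable and its dual is a multigraph, to confirm the duality statements behave correctly in the non-simple case.
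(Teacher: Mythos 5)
Your proof is correct and takes essentially the same route as the source: the paper states this corollary without proof, importing it as Corollary 5 of \cite{kieka}, where it is derived from Theorem 1 exactly as you do --- maximality makes every face (including the outer face) a triangle so the dual is cubic, the Eulerian condition is equivalent to the dual being bipartite via the face-length/vertex-degree correspondence, and the trivial depletion (deleting zero triangles, which the definition permits) then satisfies the hypothesis of Theorem 1. Your elementary forward-direction argument (each triangle through $v$ pairs up two edges at $v$, so all degrees are even) is also sound and correctly identifies that the converse is where the content lies.
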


We consider constructions of classes of graphs $G$ where every edge lies on a triangle and the addition of the minimum number of multi-edges between already adjacent vertices results in a strongly triangle divisible graph that is also triangle decomposable. We use $\eps_\Delta(G)$ to denote the minimum number of multi-edges to be added to a graph $G$ between already adjacent vertices to form a triangle decomposable multigraph. \\

We begin, in Section 3.1, with \emph{maximal outerplanar} graphs of order at least 3. An \emph{outerplanar} graph is a graph with a plane embedding such that all vertices of the graph lie on the boundary of one face. We say an outplanar graph is \emph{maximal outerplanar} if the addition of any edge between two non-adjacent vertices results in a graph that is not outerplanar. All maximal outerplanar graphs of order at least 3 are Hamiltonian, by definition, and so we consider this class of graphs from the viewpoint of triangulating the inside of an $n$-cycle. We show that for all $n \geq 3$, we can construct a maximal outerplanar graph with the minimum number of edges that is triangle decomposable. In contrast, we also show that for all $n \geq 3$ there exists a graph that requires the maximum, namely $n -3$, edges of multiplicity 2 in order to be triangle decomposable. In section 3.2 we define and briefly consider $k$-outerplanar graphs. \\

A natural progression from maximal outerplanar graphs is Hamiltonian, maximal planar graphs. We describe the construction of Eulerian, Hamiltonian, maximal planar graphs as, from Corollary 2, this is an equivalent problem to constructing triangle decomposable maximal planar graphs. In Section 3.3 we show that for $n \geq 3$ and $n \neq 4,5,7$, there are no edge multiplicities required to form a triangle decomposable maximal outerplanar, Hamilonian graph on $n$ vertices. \\

In Section 3.4 we look at are simple-clique 2-trees. Generally, a \emph{k-tree} is a graph with $n \geq k+1$ vertices that is created by first forming a $K_k$ and then adding additional vertices by connecting each new vertex to an existing $K_{k-1}$ in the graph. A \emph{simple-clique k-tree} is a $k$-tree where when adding new vertices, each existing $K_{k-1}$ can be used at most once. Thus, a simple-clique 2-tree is identically a 2-tree with no edge on 3 or more triangles. This class of graphs is natural to look at as maximal outerplanar graphs are exactly the class of \emph{simple-clique 2-trees}. It is well known that the forbidden minors of maximal outerplanar graphs are $K_4$ and $K_{2,3}$. These are exactly the forbidden minors of a simple-clique 2-tree \cite{sc3}. Therefore, we can extend our results for maximal outerplanar graphs to this class. We reprove our theorems for maximal outerplanar graphs using a different method that follows from the construction method of all simple-clique 2-trees. \\

In Section 3.5 we look at simple-clique 3-trees. All simple-clique 3-trees are maximal planar \cite{sc3}. The reverse inclusion however does not hold, as simple-clique 3-trees are not Eulerian while maximal planar graphs can be. See Figure 2. \\

\begin{figure}[th!]
\vspace{-1.5cm}
\centering
\begin{tikzpicture}
[scale=1,auto=right,every node/.style={circle,fill=gray!30},nodes={circle,draw, minimum size=.01cm}]
\foreach \lab/\ang in {a/0,b/60,c/120,d/180,e/240,f/300}
{\node(\lab) at(\ang:2){};}
\draw[ ] (a) -- (b) -- (c) -- (a);
\draw[ ] (c) -- (d) -- (e) -- (c);
\draw[ ] (e) -- (f) -- (a) -- (e);
\draw[ ] (b) to[out = 120, in = 140, looseness = 1.2] (d);
\draw[ ] (d) to[out = 220, in = 260, looseness = 1.2] (f);
\draw[ ] (b) to[out = 30, in = -30, looseness = 1.2] (f);
\end{tikzpicture}
\vspace{-1cm}
\caption{The given graph is maximal planar, but is not a simple-clique 3-tree as no vertex of degree 3 exists.}
\end{figure}
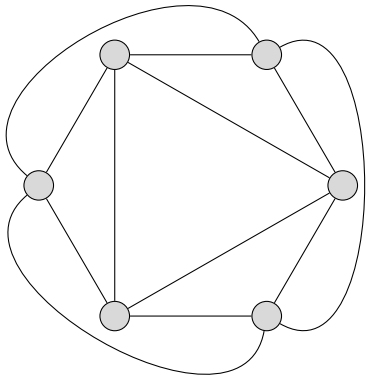

The construction we use to form Eulerian simple-clique 2-trees is relatively simple, but for simple-clique 3-trees the situation complicates. This is because a simple-clique 3-tree always has at least two vertices of odd degree, by construction. Therefore, they must be augmented by at least three multi-edges to form triangle decomposable graphs. We show that  for each $n \geq 4$ there exists a simple-clique 3-tree on $n$ vertices that requires exactly three multi-edges. \\

Finally, in Section 4 we extend the problem to look at toroidal graphs. We consider graphs that are triangulations of the torus and have all edges lying on a single face. We show constructions of graphs for a few given orders $n$ to give an upper bound on the number of multi-edges required to form a triangle decomposable toroidal graphs of this class. \\

We end with concluding remarks and open problems in Section 5. 

\section{Planar Graph Classes}

We explore a variety of graph classes, specifically focusing on planar graphs in this Section. 

\subsection{Maximal outerplanar graphs}

We denote the set of all maximal outerplanar graphs with order $n$ by $\mathcal{OP}_n$. We begin with the following simple lemma -- see \cite{textbook}, for example. 

\begin{lemma}[Theorem 6.26 in \cite{textbook}]
{A maximal outerplanar graph has size $2n - 3$.}\end{lemma}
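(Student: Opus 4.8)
The plan is to combine Euler's formula for plane graphs with the structural fact that a maximal outerplanar graph is precisely a triangulation of the interior of a fixed Hamiltonian $n$-cycle. As noted above, every maximal outerplanar graph on $n \geq 3$ vertices is Hamiltonian, so first I would fix a plane embedding in which all $n$ vertices lie on the boundary of the outer face, with this boundary being the Hamiltonian cycle $C_n$. Since the graph is Hamiltonian it is $2$-connected, so in this embedding each edge lies on the boundary of exactly two distinct faces.

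Next I would argue that every bounded face is a triangle. If some bounded face were bounded by a cycle of length at least $4$, then two of its non-consecutive vertices would be non-adjacent, and the chord joining them could be drawn inside that face without crossing any existing edge, keeping all vertices on the outer boundary and hence preserving outerplanarity. This would contradict maximality. Consequently the embedding triangulates the interior of $C_n$, and the outer face is bounded by exactly the $n$ edges of the cycle.

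I would then apply a double count of edge--face incidences together with Euler's formula $n - m + f = 2$, where $m$ is the size and $f$ the number of faces. Writing $f - 1$ for the number of (triangular) bounded faces, each bounded face contributes $3$ to the incidence count and the outer face contributes $n$, so $3(f-1) + n = 2m$. Substituting $f = 2 - n + m$ and simplifying yields $m = 2n - 3$, as required.

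The main obstacle is the structural step: verifying rigorously that maximality forces every bounded face to be a triangle and that no chord can be added without leaving some vertex off the outer boundary. Once this is established the arithmetic is immediate. An alternative route that avoids the embedding argument is induction on $n$. The base case $n = 3$ is $K_3$ with $2\cdot 3 - 3 = 3$ edges, and for $n \geq 4$ one uses the standard fact that a maximal outerplanar graph has an \emph{ear}, i.e.\ a vertex of degree $2$, whose deletion yields a maximal outerplanar graph on $n-1$ vertices; since removing such a vertex discards exactly two edges, the count $2n-3$ propagates from $2(n-1)-3$. In this approach the obstacle migrates to establishing the existence of a degree-$2$ vertex, which again rests on the triangulation structure.
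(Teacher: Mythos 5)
Your argument is correct; note that the paper itself offers no proof of this lemma, simply citing it as Theorem 6.26 of Chartrand--Lesniak--Zhang, and your Euler-formula computation is precisely the standard textbook argument (as is the inductive ear-based alternative you sketch). The one spot worth tightening is the step you yourself flag: on a bounded face of length $k \geq 4$ it is not automatic that two non-consecutive boundary vertices are non-adjacent, since the missing chord could in principle be realized by an edge drawn elsewhere in the embedding; the quick fix is to observe that if every pair of boundary vertices were adjacent, those $k \geq 4$ vertices would induce a graph containing $K_4$, contradicting outerplanarity ($K_4$ is a forbidden minor, as the paper notes in Section 3.4). With that one-line justification inserted, the double count $3(f-1) + n = 2m$ together with $n - m + f = 2$ gives $m = 2n-3$ exactly as you wrote, and the proof is complete.
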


Recall that $\eps_\Delta(G)$ is the minimum number of multi-edges that need to be added between already adjacent vertices of a graph $G$ to form a triangle decomposable multigraph. That is, $ \eps_\Delta(G) = 0$ if and only if $G$ is triangle decomposable. We define $\eps_{\Delta}(\mathcal{OP}_{n})=\min\{\eps_{\Delta}%
(G):G\in\mathcal{OP}_{n}\}$. \\

The following theorem proves the existence of triangle decomposable outerplanar graphs of order $n$, where $n$ is divisible by 3. Further, it shows that if $n$ is not divisible by 3, then a triangle decomposable, maximal outerplanar graph of order $n$ does not exist. The proof of the theorem relies on the structure of maximal outerplanar graphs. For an alternative proof that utilizes the construction method of simple-clique 2-trees, see Section 3.4. 

\begin{theorem}For $n\geq 3$, $\eps_{\Delta}(\mathcal{OP}_{n}) = 0$ if and only if $n \equiv 0 \text{\emph{ (mod 3)}}$.\end{theorem}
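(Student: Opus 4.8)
The plan is to prove the two implications separately, disposing of the necessity of $3 \mid n$ with a one-line divisibility count and reserving the real work for an explicit construction of a triangle-decomposable member of $\mathcal{OP}_n$ whenever $3 \mid n$.

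For necessity, suppose $\eps_\Delta(\mathcal{OP}_n) = 0$, so that some $G \in \mathcal{OP}_n$ is triangle decomposable. By definition a triangle decomposition partitions $E(G)$ into parts of size $3$, so $3 \mid |E(G)|$. By the preceding lemma every graph in $\mathcal{OP}_n$ has exactly $2n-3$ edges, and $3 \mid (2n-3)$ is equivalent to $3 \mid 2n$, hence to $3 \mid n$ since $\gcd(2,3)=1$. This settles the ``only if'' direction and uses nothing beyond the edge count.

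For sufficiency I would argue by induction on $k$, where $n = 3k$, carrying an auxiliary invariant strong enough to power the inductive step. The invariant I propose is that there exists a triangle-decomposable $G \in \mathcal{OP}_{3k}$ whose decomposition contains a triangle using an edge of the outer (Hamiltonian) boundary cycle. The base case $k=1$ is $K_3$, a single triangle, which satisfies the invariant trivially. For the step, start from such a $G$ with a decomposition triangle $T$ using a boundary edge $xy$, insert three new vertices $a,b,c$ into the boundary between $x$ and $y$ (so the outer cycle now reads $\dots, x, a, b, c, y, \dots$ and the former boundary edge $xy$ becomes a chord), and triangulate the resulting pentagon $x\,a\,b\,c\,y$ with the two diagonals $xb$ and $by$. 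I claim the decomposition of the new graph is the old one together with the two new triangles $\{x,a,b\}$ and $\{b,c,y\}$.

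The point that makes this work, and the step I expect to need the most care, is checking that these two new triangles are edge-disjoint from one another and from everything old while still covering every new edge. The six edges created are the new boundary edges $xa,ab,bc,cy$ and the two diagonals $xb,by$; the triangles $\{x,a,b\}=\{xa,ab,xb\}$ and $\{b,c,y\}=\{bc,cy,by\}$ partition exactly these six, meet only in the vertex $b$, and avoid the chord $xy$, so the old decomposition (which still uses $xy$ inside $T$) is left untouched. The third pentagon face $\{x,b,y\}$ is deliberately excluded from the decomposition: its edges $xb$ and $by$ are absorbed by the two new triangles and its edge $xy$ is covered by $T$, so nothing is missed and nothing is doubled. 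The invariant persists because the new triangle $\{b,c,y\}$ uses the boundary edge $bc$, which lets the construction iterate. The remaining routine verifications are that $xy$ is a genuine hull edge of $G$ (so the insertion is legitimate) and that inserting $a,b,c$ on the outer boundary keeps all vertices on one face and yields a fully triangulated polygon, so that $G' \in \mathcal{OP}_{3(k+1)}$; these are exactly the places where I would slow down to confirm the embedding is as claimed.
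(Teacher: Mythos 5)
Your proof is correct, but it is worth situating against the two proofs the paper actually gives. Your necessity argument is identical to the paper's (the $2n-3$ edge count plus divisibility). For sufficiency, however, you avoid the paper's primary proof of Theorem 4 entirely: that proof is an induction that splits into four congruence classes mod $4$, repeatedly connecting every second vertex around the Hamiltonian cycle to shrink $C_{3k}$ down to an inner cycle of length $3r$, with separate patching triangles in each case. Your single uniform step --- insert $a,b,c$ outside a hull edge $xy$, triangulate the pentagon $x\,a\,b\,c\,y$ with diagonals $xb$, $by$, and adjoin the triangles $\{x,a,b\}$ and $\{b,c,y\}$ while excluding the face $\{x,b,y\}$ --- is substantially simpler and needs no case analysis. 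In fact, up to relabeling ($b \leftrightarrow v$, $a \leftrightarrow w_1$, $c \leftrightarrow w_2$) it coincides exactly with the paper's alternative proof, Theorem 4 (V2) in Section 3.4, which grows a simple-clique 2-tree by attaching one vertex to an edge (forming the excluded triangle) and then one vertex to each of the two resulting edges (forming the two included triangles); your ``pentagon'' description and the paper's ``2-tree growth'' description are the same move. So your route validates the paper's own observation that the 2-tree construction is the cleaner proof. One small remark: your auxiliary invariant --- that the decomposition contains a triangle using a boundary edge --- is superfluous, since a triangle decomposition partitions \emph{all} of $E(G)$, so every boundary edge automatically lies in some decomposition triangle, and your step works starting from \emph{any} hull edge $xy$; what actually matters, and what you correctly verify, is that the two new triangles cover precisely the six new edges while $xy$ remains covered by the untouched old decomposition, and that the edge count $6k-3+6 = 2(3k+3)-3$ together with the internally triangulated polygon structure certifies $G' \in \mathcal{OP}_{3(k+1)}$.
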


\begin{proof}
Suppose first that $\eps_{\Delta}(\mathcal{OP}_{n}) = 0$. Then, there exists $G \in \mathcal{OP}_n$ such that $\eps_{\Delta}(G) = 0$. Let us fix such a $G$. Then $|E(G)| \equiv 0 \text{ (mod 3)}$. As $G \in \mathcal{OP}_n$, it follows that $E(G) = 2n - 3$. Therefore, $E(G) \equiv 2n - 3 \text{ (mod 3)}$ implies $n \equiv 0 \text{ (mod 3)}$, as desired. \\

Conversely, suppose $n \equiv 0 \text{ (mod 3)}$. We show there exists some graph $G \in \mathcal{OP}_n$ such that $G$ is triangle decomposable. It is convenient to look at four cases, one for each congruence class mod 4. We proceed by induction on $n$. \\

First, consider the following graphs on $n$ vertices, where $n = 3,6, 9, 12$. The order of each of these graphs represents a different congruence class mod 4. 

$$
n = 3: \\
\begin{tikzpicture}
[scale=.5,auto=right,every node/.style={circle,fill=gray!30},nodes={circle,draw, minimum size=.01cm}]
\foreach \lab/\ang in {a/90,b/210,c/330}
{\node(\lab) at(\ang:2){};}
\draw[very thick, red]  (a) -- (b) -- (c) -- (a);
\end{tikzpicture} \hspace{5mm}
n = 6: 
\begin{tikzpicture}
[scale=.5,auto=right,every node/.style={circle,fill=gray!30},nodes={circle,draw, minimum size=.01cm}]
\foreach \lab/\ang in {a/0,b/60,c/120,d/180,e/240,f/300}
{\node(\lab) at(\ang:2){};}
\draw[very thick, red] (a) -- (b) -- (c) -- (a);
\draw[very thick, green] (c) -- (d) -- (e) -- (c);
\draw[very thick, blue] (e) -- (f) -- (a) -- (e);
\end{tikzpicture}$$

$$
n = 9: \\
\begin{tikzpicture}
[scale=.5,auto=right,every node/.style={circle,fill=gray!30},nodes={circle,draw, minimum size=.01cm}]
\foreach \lab/\ang in {a/0,b/40,c/80,d/120,e/160,f/200,g/240,h/280,i/320}
{\node(\lab) at(\ang:3){};}
\draw[very thick, red] (a) -- (b) -- (c) -- (a);
\draw[very thick, green] (c) -- (d) -- (e) -- (c);
\draw[very thick, blue] (e) -- (f) -- (g) -- (e);
\draw[very thick, black] (g) -- (h) -- (i) -- (g);
\draw[very thick, yellow] (a) -- (i) -- (e) -- (a);
\end{tikzpicture} \hspace{5mm}
n = 12: \\
\begin{tikzpicture}
[scale=.5,auto=right,every node/.style={circle,fill=gray!30},nodes={circle,draw, minimum size=.01cm}]
\foreach \lab/\ang in {a/0,b/30,c/60,d/90,e/120,f/150,g/180,h/210,i/240,j/270,k/300,l/330}
{\node(\lab) at(\ang:5){};}
\draw[very thick, red] (a) -- (b) -- (c) -- (a);
\draw[very thick, green] (c) -- (d) -- (e) -- (c);
\draw[very thick, pink] (e) -- (f) --(g) -- (e);
\draw[very thick, blue] (g) -- (h) -- (i) -- (g);
\draw[very thick, orange] (i) -- (j) -- (k) -- (i);
\draw[very thick, cyan] (k) -- (l) -- (a) -- (k);
\draw[very thick, yellow] (a) -- (e) -- (i) -- (a);
\end{tikzpicture}$$ \\

Next, assume there exists a maximal outerplanar graph $G$ of order $3(k-1)$ that is triangle decomposable, for some $k \geq 2$. Recall that all maximal outerplanar graphs of order 3 or more are Hamiltonian, so this assumption is equivalent to saying the inside of any cycle of length $3(k-1)$ can be triangulated to form a triangle decomposable maximal outerplanar graph. \\

For the induction step, we consider four cases, one for each congruence class mod 4. \\

\emph{Case 1:} Suppose $k = 4r$ for some $r \in \N$ where $r < k$. Then, $n = 12r$. \\

Begin by forming a cycle $C_{12r}$. Label the vertices going around the cycle $v_1,v_2,...v_{12r}$. Form triangles around the inner edge of the cycle by connecting every second vertex with an edge, starting at $v_1$. This adds edges between $v_1$ and $v_3$, $v_3$ and $v_5$,... and, $v_{12r-1}$ and $v_1$. These triangles are included in the triangle decomposition of the graph. Notice that the innermost cycle of this graph has length $6r$. 

$$\begin{tikzpicture}
[scale=.4,auto=right,every node/.style={circle,fill=gray!30,draw, inner sep= 2pt}]
\foreach \labb/\lab /\ang in {1/$v_1$/90, 2/$v_2$/60, 3/$v_3$/30,4/$v_4$/0, 5/$v_{12r}$/120,6/$v_{12r-1}$/150,7/$v_{12r-2}$/180}
{\node(\labb)at(\ang:10){\tiny\lab};}
\node(8)at(210:10){\tiny$v_{12r-3}$};
\node(9)at(-30:10){\tiny$v_5$};
\node[fill=none,draw=none](10)at(230:9){};
\node[fill=none,draw=none](11)at(230:10){};
\node[fill=none,draw=none](12)at(-50:9){};
\node[fill=none,draw=none](13)at(-50:10){};
\draw[dashed,red,very thick](10) -- (8);
\draw[dashed](9) -- (13);
\draw[dashed,red,very thick](12) -- (9);
\draw[dashed](8) -- (11);
\draw(1)--(2)--(3)--(4);
\draw(7)--(6)--(5)--(1);
\draw(7) -- (8);
\draw(4)--(9);
\draw[very thick, red, bend right = 60] (1) -- (3);
\draw[very thick, red] (9) -- (3);
\draw[very thick, red] (8) -- (6);
\draw[very thick, red] (1) -- (6);
\end{tikzpicture}$$

Again, connect every other vertex on the inner cycle, starting at $v_1$. This forms a cycle of length $3r$, where no edges have been used in the triangle decomposition yet. Therefore, by the induction hypothesis, additional edges can be added within this cycle to form a triangle decomposable graph. 

$$\begin{tikzpicture}
[scale=.4,auto=right,every node/.style={circle,fill=gray!30,draw, inner sep= 2pt}]
\foreach \labb/\lab /\ang in {1/$v_1$/90, 2/$v_2$/60, 3/$v_3$/30,4/$v_4$/0, 5/$v_{12r}$/120,6/$v_{12r-1}$/150,7/$v_{12r-2}$/180}
{\node(\labb)at(\ang:10){\tiny\lab};}
\node(8)at(210:10){\tiny$v_{12r-3}$};
\node(9)at(-30:10){\tiny$v_5$};
\node[fill=none,draw=none](10)at(230:9){};
\node[fill=none,draw=none](11)at(230:10){};
\node[fill=none,draw=none](12)at(-50:9){};
\node[fill=none,draw=none](13)at(-50:10){};
\draw[dashed,red,very thick](10) -- (8);
\draw[dashed](9) -- (13);
\draw[dashed,red,very thick](12) -- (9);
\draw[dashed](8) -- (11);
\draw(1)--(2)--(3)--(4);
\draw(7)--(6)--(5)--(1);
\draw(7) -- (8);
\draw(4)--(9);
\draw[very thick, red] (1) -- (3);
\draw[very thick, red] (9) -- (3);
\draw[very thick, red] (8) -- (6);
\draw[very thick, red] (1) -- (6);
\node[fill=none,draw=none](14)at(-50:8){};
\node[fill=none,draw=none](15)at(230:8){};
\draw[very thick, blue] (1) -- (9);
\draw[very thick, blue] (1) -- (8);
\draw[very thick, blue,dashed] (9) -- (14);
\draw[very thick, blue,dashed] (8) -- (15);
\end{tikzpicture}$$

\emph{Case 2:} Suppose $k = 4r + 1$ for some $r \in \N$, where $r < k$. Then, $n = 12r + 3$. \\

Begin by forming a cycle $C_{12r+3}$. Label the vertices going around the cycle $v_1,v_2,...v_{12r+3}$. Starting at $v_1$, connect every other vertex with an edge, stopping at $v_{12r+3}$. All edges in the graph, other than $v_{1}v_{12r+3}$, now lie on a triangle. To form a triangle with the edge $v_{1}v_{12r+3}$, connect $v_1$ and $v_{12r+3}$ to the vertex $v_{12r-1}$. All edges in the graph now lie on a triangle. These triangles are included in the triangle decomposition of the graph. Notice the innermost cycle of the graph now has order $6r$. 

$$\begin{tikzpicture}
[scale=.67,auto=right,every node/.style={circle,fill=gray!30, inner sep= 2pt,draw}]
\foreach \labb/\lab /\ang in {1/$v_1$/90,2/$v_2$/70,3/$v_3$/50,4/$v_4$/30,5/$v_5$/10,6/$v_6$/-10,7/$v_{12r+3}$/110,8/$v_{12r+2}$/130,9/$v_{12r+1}$/150,10/$v_{12r}$/170,11/$v_{12r-1}$/190}
\node(\labb)at(\ang:7){\tiny\lab};
\node[fill=none,draw = none](12)at(-20:7){};
\node[fill=none,draw = none](13)at(-20:6.5){};
\node[fill=none,draw = none](14)at(200:7){};
\node[fill=none,draw = none](15)at(200:6.5){};
\draw[dashed](12)--(6);
\draw[dashed,very thick, red](5)to[bend right=15](13);
\draw[dashed](11)--(14);
\draw[dashed,very thick, red](11)--(15);
\draw (11) -- (10) -- (9)--(8)--(7)--(1)--(2)--(3)--(4)--(5)--(6);
\draw[very thick, red] (1) to[bend right=15](3) ;
\draw[very thick, red] (3) to[bend right=15] (5);
\draw[very thick, red] (11) to[bend right=15] (9);
\draw[very thick, red] (9)to[bend right=15] (7);
\draw[very thick, red] (1) to[bend left=15] (11) to[bend right=15] (7);
\end{tikzpicture}$$

Again, connect every other vertex on the inner cycle, starting at $v_1$. This forms a cycle of length $3r$, where no edges have been used in the triangle decomposition yet. Therefore, by the induction hypothesis, additional edges can be added within the cycle to form a triangle decomposable graph. \\

$$\begin{tikzpicture}
[scale=.67,auto=right,every node/.style={circle,fill=gray!30, inner sep= 2pt,draw}]
\foreach \labb/\lab /\ang in {1/$v_1$/90,2/$v_2$/70,3/$v_3$/50,4/$v_4$/30,5/$v_5$/10,6/$v_6$/-10,7/$v_{12r+3}$/110,8/$v_{12r+2}$/130,9/$v_{12r+1}$/150,10/$v_{12r}$/170,11/$v_{12r-1}$/190}
\node(\labb)at(\ang:7){\tiny\lab};
\node[fill=none,draw = none](12)at(-20:7){};
\node[fill=none,draw = none](13)at(-20:6.5){};
\node[fill=none,draw = none](14)at(200:7){};
\node[fill=none,draw = none](15)at(200:6.5){};
\draw[dashed](12)--(6);
\draw[dashed,very thick, red](5)to[bend right=15](13);
\draw[dashed](11)--(14);
\draw[dashed,very thick, red](11)--(15);
\draw (11) -- (10) -- (9)--(8)--(7)--(1)--(2)--(3)--(4)--(5)--(6);
\draw[very thick, red] (1) to[bend right=15](3) ;
\draw[very thick, red] (3) to[bend right=15] (5);
\draw[very thick, red] (11) to[bend right=15] (9);
\draw[very thick, red] (9)to[bend right=15] (7);
\draw[very thick, red] (1) to[bend left=15] (11) to[bend right=15] (7);
\draw[very thick, blue](1)to[bend right =15](5);
\node[fill=none,draw=none](16)at(200:5.5){};
\draw[very thick, blue,dashed](1)to[bend left =15](16);
\node[fill=none,draw=none](17)at(-20:5.5){};
\draw[dashed,very thick, blue](5)to[bend right =15](17);
\end{tikzpicture}$$ \\~\\

\emph{Case 3:} Suppose $k = 4r + 2$, where $r < k$. Then, $n = 12r + 6$. \\

Begin by forming a cycle of order $C_{12r+6}$. Label the vertices going around the cycle $v_1,v_2,...v_{12r+6}$. Starting at $v_1$, connect every other vertex. Every edge in the graph now lies on a triangle. We include all of these triangles in our triangle decomposition. Notice that the innermost cycle of the graph has length $6r+3$. 

$$\begin{tikzpicture}
[scale=.6,auto=right,every node/.style={circle,fill=gray!30, inner sep = 1pt,draw}]
\foreach \labb/\lab /\ang in {1/$v_1$/90,2/$v_2$/70,3/$v_3$/50,4/$v_4$/30,5/$v_5$/10,6/$v_6$/-10,7/$v_{12r+6}$/110,8/$v_{12r+5}$/130,9/$v_{12r+4}$/150,10/$v_{12r+3}$/170,11/$v_{12r+2}$/190,12/$v_{12r+1}$/210,15/$v_{7}$/-30,13/$v_{12r}$/230,14/$v_{12r-1}$/250}
\node(\labb)at(\ang:7){\tiny\lab};
\draw(14)--(13)--(12)--(11)--(10)--(9)--(8)--(7)--(1)--(2)--(3)--(4)--(5)--(6)--(15);
\draw[red,very thick](1)to[bend right =15] (3);
\draw[red,very thick](3)to[bend right =15](5);
\draw[red,very thick](5)to[bend right =15](15);
\draw[red,very thick](14)to[bend right =15](12);
\draw[red,very thick](12)to[bend right =15](10);
\draw[red,very thick](10)to[bend right =15](8);
\draw[red,very thick](8)to[bend right =15](1);
\node[fill=none, draw = none](16)at(265:7){};
\draw[dashed](14)--(16);
\node[fill=none, draw = none](17)at(265:6){};
\draw[dashed,red,very thick](14)--(17);
\node[fill=none, draw = none](18)at(-40:7){};
\draw[dashed](15)--(18);
\node[fill=none, draw = none](19)at(-40:6){};
\draw[dashed,red,very thick](15)--(19);
\end{tikzpicture}$$
Next, form a triangle with vertices $v_1$, $v_{12r+3}$, and $v_{12r}$. This new triangle is also part of the triangle decomposition. Notice the innermost cycle is now of order $6r$.  
$$\begin{tikzpicture}
[scale=.6,auto=right,every node/.style={circle,fill=gray!30, inner sep = 1pt,draw}]
\foreach \labb/\lab /\ang in {1/$v_1$/90,2/$v_2$/70,3/$v_3$/50,4/$v_4$/30,5/$v_5$/10,6/$v_6$/-10,7/$v_{12r+6}$/110,8/$v_{12r+5}$/130,9/$v_{12r+4}$/150,10/$v_{12r+3}$/170,11/$v_{12r+2}$/190,12/$v_{12r+1}$/210,15/$v_{7}$/-30,13/$v_{12r}$/230,14/$v_{12r-1}$/250}
\node(\labb)at(\ang:7){\tiny\lab};
\draw(14)--(13)--(12)--(11)--(10)--(9)--(8)--(7)--(1)--(2)--(3)--(4)--(5)--(6)--(15);
\draw[red,very thick](1)to[bend right =15] (3);
\draw[red,very thick](3)to[bend right =15](5);
\draw[red,very thick](5)to[bend right =15](15);
\draw[red,very thick](14)to[bend right =15](12);
\draw[red,very thick](12)to[bend right =15](10);
\draw[red,very thick](10)to[bend right =15](8);
\draw[red,very thick](8)to[bend right =15](1);
\node[fill=none, draw = none](16)at(265:7){};
\draw[dashed](14)--(16);
\node[fill=none, draw = none](17)at(265:6){};
\draw[dashed,red,very thick](14)--(17);
\node[fill=none, draw = none](18)at(-40:7){};
\draw[dashed](15)--(18);
\node[fill=none, draw = none](19)at(-40:6){};
\draw[dashed,red,very thick](15)--(19);
\node[fill=none, draw = none](18)at(-40:7){};
\draw[dashed](15)--(18);
\node[fill=none, draw = none](19)at(-40:6){};
\draw[very thick, blue](14)to[bend left =0](1);
\draw[very thick, blue](1)to[bend left =5](10);
\draw[very thick, blue](10)to[bend left =5](14);
\end{tikzpicture}$$

Again, connect every other vertex of the inner cycle starting at $v_1$. This forms a cycle of length $3r$ where no edges have been used in the triangle decomposition yet. As $r < k$, the result follows from the induction hypothesis. 

$$\begin{tikzpicture}
[scale=.6,auto=right,every node/.style={circle,fill=gray!30, inner sep = 1pt,draw}]
\foreach \labb/\lab /\ang in {1/$v_1$/90,2/$v_2$/70,3/$v_3$/50,4/$v_4$/30,5/$v_5$/10,6/$v_6$/-10,7/$v_{12r+6}$/110,8/$v_{12r+5}$/130,9/$v_{12r+4}$/150,10/$v_{12r+3}$/170,11/$v_{12r+2}$/190,12/$v_{12r+1}$/210,15/$v_{7}$/-30,13/$v_{12r}$/230,14/$v_{12r-1}$/250}
\node(\labb)at(\ang:7){\tiny\lab};
\draw(14)--(13)--(12)--(11)--(10)--(9)--(8)--(7)--(1)--(2)--(3)--(4)--(5)--(6)--(15);
\draw[red,very thick](1)to[bend right =15] (3);
\draw[red,very thick](3)to[bend right =15](5);
\draw[red,very thick](5)to[bend right =15](15);
\draw[red,very thick](14)to[bend right =15](12);
\draw[red,very thick](12)to[bend right =15](10);
\draw[red,very thick](10)to[bend right =15](8);
\draw[red,very thick](8)to[bend right =15](1);
\node[fill=none, draw = none](16)at(265:7){};
\draw[dashed](14)--(16);
\node[fill=none, draw = none](17)at(265:6){};
\draw[dashed,red,very thick](14)--(17);
\node[fill=none, draw = none](18)at(-40:7){};
\draw[dashed](15)--(18);
\node[fill=none, draw = none](19)at(-40:6){};
\draw[dashed,red,very thick](15)--(19);
\node[fill=none, draw = none](18)at(-40:7){};
\draw[dashed](15)--(18);
\node[fill=none, draw = none](19)at(-40:6){};
\draw[very thick, blue](14)to[bend left =0](1);
\draw[very thick, blue](1)to[bend left =5](10);
\draw[very thick, blue](10)to[bend left =5](14);
\draw[very thick, cyan] (1) to[bend right =10](5);
\draw[very thick, cyan, dashed](5)to[bend right =10](19);
\draw[very thick, cyan, dashed](1)to[bend left =10](17);
\end{tikzpicture}$$

\emph{Case 4:} Suppose $k = 4r+3$ for some $r \in \N$, where $r < k$. Then, $n = 12r+9$. \\

Begin by forming a cycle $C_{12r+9}$. Similar to Case 2, connect every other vertex in the graph beginning at $v_1$ until $v_{12r+9}$. Every edge other than $v_1v_{12r+9}$ lies in a triangle. Form a triangle between $v_{1}$, $v_{12r+9}$ and $v_{12r+5}$ by adding an edge between $v_1$ and $v_{12r+5}$ and an edge between $v_{12r+9}$ and $v_{12r+5}$. The inner-cycle of the graph has order $6r+3$. 

$$\begin{tikzpicture}
[scale=.7,auto=right,every node/.style={circle,fill=gray!30, inner sep = 1pt,draw}]
\foreach \labb/\lab /\ang in {1/$v_1$/90,2/$v_2$/75,3/$v_3$/60,4/$v_4$/45,5/$v_5$/30,6/$v_6$/15,
7/$v_{12r+9}$/105}
\node[](\labb)at(\ang:7){\tiny\lab};
\foreach \labb/\lab /\ang in {11/$v_{12r+5}$/165,12/$v_{12r+4}$/180,13/$v_{12r+3}$/195,14/$v_{12r+2}$/210,15/$v_{12r+1}$/225,10/$v_{12r+6}$/150,9/$v_{12r+7}$/135,8/$v_{12r+8}$/120}
\node[](\labb)at(\ang:7){\tiny\lab};
\foreach \labb/\lab /\ang in {16/$v_{12r}$/240,17/$v_{12r-1}$/255}
\node[](\labb)at(\ang:7){\tiny\lab};
\node[](19)at(0:7){\tiny$v_7$};
\draw(17)--(16)--(15)--(14)--(13)--(12)--(11)--(10)--(9)--(8)--(7)--(1)--(2)--(3)--(4)--(5)--(6)--(19);
\draw[red,very thick](1)to[bend right = 22](3);
\draw[red,very thick](3)to[bend right = 22](5);
\draw[red,very thick](5)to[bend right = 22](19);
\draw[red,very thick](17)to[bend right = 22](15);
\draw[red,very thick](15)to[bend right = 22](13);
\draw[red,very thick](13)to[bend right = 22](11);
\draw[red,very thick](11)to[bend right = 22](9);
\draw[red,very thick](9)to[bend right = 22](7);
\draw[red,very thick](7)to[bend left = 22](11);
\draw[red,very thick](11)to[bend right = 22](1);
\node[fill=none,draw = none](a)at(-10:7){};
\node[fill=none,draw = none](b)at(-10:6.5){};
\draw[dashed](19) -- (a);
\draw[dashed,red,very thick](19) -- (b);
\node[fill=none,draw = none](a')at(265:7){};
\node[fill=none,draw = none](b')at(265:6.5){};
\draw[dashed](17) -- (a');
\draw[dashed,red,very thick](17) -- (b');
\end{tikzpicture}$$

Form a triangle with vertices $v_1$, $v_{12r+3}$ and $v_{12r-1}$. This new triangle is included in our triangle decomposition. The innermost cycle is of order $6r$. \\

$$\begin{tikzpicture}
[scale=.65,auto=right,every node/.style={circle,fill=gray!30, inner sep = 1pt,draw}]
\foreach \labb/\lab /\ang in {1/$v_1$/90,2/$v_2$/75,3/$v_3$/60,4/$v_4$/45,5/$v_5$/30,6/$v_6$/15,
7/$v_{12r+9}$/105}
\node[](\labb)at(\ang:7){\tiny\lab};
\foreach \labb/\lab /\ang in {11/$v_{12r+5}$/165,12/$v_{12r+4}$/180,13/$v_{12r+3}$/195,14/$v_{12r+2}$/210,15/$v_{12r+1}$/225,10/$v_{12r+6}$/150,9/$v_{12r+7}$/135,8/$v_{12r+8}$/120}
\node[](\labb)at(\ang:7){\tiny\lab};
\foreach \labb/\lab /\ang in {16/$v_{12r}$/240,17/$v_{12r-1}$/255}
\node[](\labb)at(\ang:7){\tiny\lab};
\node[](19)at(0:7){\tiny$v_7$};
\draw(17)--(16)--(15)--(14)--(13)--(12)--(11)--(10)--(9)--(8)--(7)--(1)--(2)--(3)--(4)--(5)--(6)--(19);
\draw[red,very thick](1)to[bend right = 22](3);
\draw[red,very thick](3)to[bend right = 22](5);
\draw[red,very thick](5)to[bend right = 22](19);
\draw[red,very thick](17)to[bend right = 22](15);
\draw[red,very thick](15)to[bend right = 22](13);
\draw[red,very thick](13)to[bend right = 22](11);
\draw[red,very thick](11)to[bend right = 22](9);
\draw[red,very thick](9)to[bend right = 22](7);
\draw[red,very thick](7)to[bend left = 22](11);
\draw[red,very thick](11)to[bend right = 22](1);
\node[fill=none,draw = none](a)at(-10:7){};
\node[fill=none,draw = none](b)at(-10:6.5){};
\draw[dashed](19) -- (a);
\draw[dashed,red,very thick](19) -- (b);
\node[fill=none,draw = none](a')at(265:7){};
\node[fill=none,draw = none](b')at(265:6.5){};
\draw[dashed](17) -- (a');
\draw[dashed,red,very thick](17) -- (b');
\draw[very thick, blue] (1) to[bend left = 22] (13);
\draw[very thick, blue] (13) to[bend left =22] (17);
\draw[very thick, blue] (17) -- (1);
\end{tikzpicture}$$
Connect every other vertex in the innermost cycle, beginning at $v_1$. This will form a cycle of length $3r$ where no edges have been used in the triangle decomposition of the graph. As $r < k$, the result follows from the induction hypothesis. 
$$\begin{tikzpicture}
[scale=.65,auto=right,every node/.style={circle,fill=gray!30, inner sep = 1pt,draw}]
\foreach \labb/\lab /\ang in {1/$v_1$/90,2/$v_2$/75,3/$v_3$/60,4/$v_4$/45,5/$v_5$/30,6/$v_6$/15,
7/$v_{12r+9}$/105}
\node[](\labb)at(\ang:7){\tiny\lab};
\foreach \labb/\lab /\ang in {11/$v_{12r+5}$/165,12/$v_{12r+4}$/180,13/$v_{12r+3}$/195,14/$v_{12r+2}$/210,15/$v_{12r+1}$/225,10/$v_{12r+6}$/150,9/$v_{12r+7}$/135,8/$v_{12r+8}$/120}
\node[](\labb)at(\ang:7){\tiny\lab};
\foreach \labb/\lab /\ang in {16/$v_{12r}$/240,17/$v_{12r-1}$/255}
\node[](\labb)at(\ang:7){\tiny\lab};
\node[](19)at(0:7){\tiny$v_7$};
\draw(17)--(16)--(15)--(14)--(13)--(12)--(11)--(10)--(9)--(8)--(7)--(1)--(2)--(3)--(4)--(5)--(6)--(19);
\draw[red,very thick](1)to[bend right = 22](3);
\draw[red,very thick](3)to[bend right = 22](5);
\draw[red,very thick](5)to[bend right = 22](19);
\draw[red,very thick](17)to[bend right = 22](15);
\draw[red,very thick](15)to[bend right = 22](13);
\draw[red,very thick](13)to[bend right = 22](11);
\draw[red,very thick](11)to[bend right = 22](9);
\draw[red,very thick](9)to[bend right = 22](7);
\draw[red,very thick](7)to[bend left = 22](11);
\draw[red,very thick](11)to[bend right = 22](1);
\node[fill=none,draw = none](a)at(-10:7){};
\node[fill=none,draw = none](b)at(-10:6.5){};
\draw[dashed](19) -- (a);
\draw[dashed,red,very thick](19) -- (b);
\node[fill=none,draw = none](a')at(265:7){};
\node[fill=none,draw = none](b')at(265:6.5){};
\draw[dashed](17) -- (a');
\draw[dashed,red,very thick](17) -- (b');
\draw[very thick, blue] (1) to[bend left = 22] (13);
\draw[very thick, blue] (13) to[bend left =22] (17);
\draw[very thick, blue] (17) -- (1);
\node[fill=none,draw=none](c')at(275:6){};
\draw[dashed](17) -- (a');
\node[fill=none,draw=none](c)at(-10:6){};
\draw[dashed,red,very thick](17) -- (b');
\draw[dashed,cyan,very thick](1) -- (c');
\draw[dashed,cyan,very thick](5) to[bend right = 15](c);
\draw[very thick, cyan] (1)to[bend right = 17](5);
\end{tikzpicture}$$

As Cases 1-4 exhaust all possibilities, the theorem follows by the principle of induction. 

\end{proof}
The following two results imply $\eps_\Delta(\mathcal{OP}_n)$ is dictated by the congruence class of $n$ mod 3. The proofs follow the same method as the proof of Theorem 4. See the appendix. 

\begin{theorem} For all $n \geq 3$, $n \equiv 1 \text{\emph{ (mod 3)}}$ if and only if $\eps_{\Delta}(\mathcal{OP}_{n}) = 1$.\end{theorem}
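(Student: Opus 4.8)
The plan is to prove both implications while isolating the purely arithmetic content from the one genuine construction. For the ``only if'' direction, suppose $\eps_\Delta(\mathcal{OP}_n)=1$. Then some $G\in\mathcal{OP}_n$ yields a triangle decomposable multigraph $H$ upon adding exactly one multi-edge, so by Lemma 3 we have $|E(H)|=(2n-3)+1=2n-2$. Since any triangle decomposition partitions the edge set into triples, $3$ divides $|E(H)|$, whence $2n-2\equiv 0 \pmod 3$ and therefore $n\equiv 1 \pmod 3$. This direction is entirely a divisibility count and needs no construction.

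For the ``if'' direction I would separate the lower and upper bounds. The lower bound $\eps_\Delta(\mathcal{OP}_n)\ge 1$ is immediate: for $n\equiv 1 \pmod 3$ every $G\in\mathcal{OP}_n$ has $|E(G)|=2n-3\equiv 2 \pmod 3$, which is not divisible by $3$, so no such $G$ is already triangle decomposable (equivalently, Theorem 4 gives $\eps_\Delta(\mathcal{OP}_n)\ne 0$). It then remains to exhibit a single $G\in\mathcal{OP}_n$ with $\eps_\Delta(G)=1$. Since $n-1\equiv 0 \pmod 3$ and $n-1\ge 3$, Theorem 4 supplies a triangle decomposable maximal outerplanar graph $G'$ on $n-1$ vertices; fix such a $G'$ together with one of its triangle decompositions $\mathcal{D}'$. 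Let $ab$ be any edge on the outer boundary of $G'$, and form $G$ by attaching a new vertex $v$ adjacent to $a$ and $b$ (an \emph{ear}). Then $G$ is a maximal outerplanar graph on $n$ vertices, and this is the graph I claim realises $\eps_\Delta(G)=1$.

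To see that one multi-edge suffices, note that in $G$ the vertex $v$ has degree $2$, so the only triangle that can cover the edges $va$ and $vb$ is $vab$, which requires an edge $ab$; but the unique edge $ab$ of $G$ is already consumed by $\mathcal{D}'$. Duplicating $ab$ resolves this. Let $H$ be $G$ with a second copy of $ab$ added. Then $\mathcal{D}'$ covers every edge of $G'$ (including one copy of $ab$), and the triangle $\{va,vb,ab\}$ on the second copy covers the three remaining edges, so $H$ is triangle decomposable and $\eps_\Delta(G)\le 1$. Combined with the lower bound this gives $\eps_\Delta(G)=1$ and hence $\eps_\Delta(\mathcal{OP}_n)=1$. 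A parity check confirms consistency with the Eulerian requirement: attaching the ear raises $\deg a$ and $\deg b$ by one (making them odd) while giving $\deg v=2$, and doubling $ab$ restores $\deg a$ and $\deg b$ to even values, so all degrees of $H$ are even, as $\mathcal{D}'$ already forces even degrees in $G'$.

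The only delicate point, and thus the step I expect to require the most care, is verifying that the chosen single multi-edge genuinely produces a triangle decomposition rather than merely satisfying the necessary counting and parity conditions; the ear-plus-doubling argument handles this directly by reusing $\mathcal{D}'$ intact. An alternative, in keeping with the method of Theorem 4, is to build the graph by the same layered triangulation of an $n$-cycle, peeling off shells of triangles until an inner cycle of length a multiple of $3$ remains and then invoking the induction hypothesis; in that formulation the single boundary edge that cannot be covered is exactly the one whose duplication supplies the lone multi-edge. Either route reduces the theorem to the arithmetic of $|E(G)|=2n-3$ modulo $3$ together with one explicit decomposition, so no further obstacle remains.
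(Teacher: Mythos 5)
Your proof is correct, but it takes a genuinely different route from the paper's. The paper proves the ``if'' direction by induction on $n$ with four separate cases according to the congruence class of $n$ modulo $4$ (base cases $n=4,7,10,13$), in each case triangulating an $n$-cycle by repeatedly joining every other boundary vertex to peel off shells of triangles until an inner cycle of length $3(k-1)+1$ remains, and then invoking the induction hypothesis for the \emph{same} theorem on that inner cycle. You instead reduce directly to Theorem~4: take a triangle decomposable $G'\in\mathcal{OP}_{n-1}$ (which exists since $n-1\equiv 0\pmod 3$ and $n-1\geq 3$), attach an ear $v$ to a boundary edge $ab$, and double $ab$ so that the old decomposition $\mathcal{D}'$ together with the triangle on $\{va,vb,ab\}$ decomposes the multigraph. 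This collapses the paper's four-case induction into a two-line construction, and it has a cleaner logical dependency (your induction-free argument leans only on the already-proved $n\equiv 0$ case, whereas the paper's appendix proof recurses on the $n\equiv 1$ statement itself). Your combined lower bound ($2n-3\equiv 2\pmod 3$ rules out $\eps_\Delta(G)=0$ for \emph{every} $G\in\mathcal{OP}_n$) plus the single witness is exactly what the definition $\eps_\Delta(\mathcal{OP}_n)=\min\{\eps_\Delta(G)\}$ requires. Notably, your construction is essentially the simple-clique 2-tree viewpoint the paper itself adopts in Section~3.4, where it sketches an ``alternate proof'' of this theorem starting from the graph of Figure~7 --- which is precisely your $n=4$ instance (a quadrilateral with a doubled diagonal) --- so your route is a fully worked-out version of what the paper only gestures at. The one step you should make explicit is that the ear attachment preserves maximal outerplanarity: this follows either from the edge count (the resulting outerplanar graph has $2n-3$ edges, the maximum possible) or from the observation that you are performing one step of the simple-clique 2-tree construction on a boundary edge, which lies on only one triangle of $G'$. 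What the paper's heavier shelling method buys in exchange is uniformity: the identical layered construction is reused for Theorems~4 through~7, whereas your shortcut is specific to transferring the result from one residue class to the next.
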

\begin{theorem} For all $n \geq 3$, $n \equiv 2 \text{\emph{ (mod 3)}}$ if and only if $\eps_{\Delta}(\mathcal{OP}_{n}) = 2$. \end{theorem}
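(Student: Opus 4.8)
The plan is to prove both implications, with the easy direction coming essentially for free from the two preceding theorems and the real work concentrated in the direction $n\equiv 2 \Rightarrow \eps_\Delta(\mathcal{OP}_n)=2$. I would split that direction into a matching lower and upper bound on $\eps_\Delta(\mathcal{OP}_n)$.

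For the lower bound, fix any $G\in\mathcal{OP}_n$ with $n\equiv 2\pmod 3$. By Lemma 3, $|E(G)|=2n-3\equiv 1\pmod 3$. Any triangle-decomposable multigraph has its edge set partitioned into copies of $K_3$, so its size is divisible by $3$. Hence if $H$ is obtained from $G$ by adding $\eps$ multi-edges and $H$ is triangle decomposable, then $2n-3+\eps\equiv 0\pmod 3$, i.e.\ $\eps\equiv 2\pmod 3$, so $\eps\ge 2$. This shows $\eps_\Delta(G)\ge 2$ for every such $G$, and therefore $\eps_\Delta(\mathcal{OP}_n)\ge 2$.

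For the upper bound I would exhibit one $G\in\mathcal{OP}_n$ with $\eps_\Delta(G)=2$, built directly on top of Theorem 4. Since $n\equiv 2\pmod 3$ forces $n-2\equiv 0\pmod 3$ with $n-2\ge 3$, Theorem 4 supplies a triangle-decomposable $G'\in\mathcal{OP}_{n-2}$; fix a triangle decomposition $\mathcal{T}$ of $G'$. Now form $G$ by adding two \emph{ears}: choose two distinct boundary edges $u_1v_1,u_2v_2$ of $G'$ and, for each $i$, attach a new degree-$2$ vertex $w_i$ adjacent to $u_i$ and $v_i$. Adding an ear to a maximal outerplanar graph keeps it maximal outerplanar, so $G\in\mathcal{OP}_n$. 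To decompose the augmented graph, duplicate each of the two edges $u_iv_i$; these are the two added multi-edges, and they lie between already-adjacent vertices. Then $\mathcal{T}$ together with the two new triangles $u_1v_1w_1$ and $u_2v_2w_2$ --- each consuming a fresh copy of its base edge and its two brand-new ear-edges --- forms a triangle decomposition of the augmented multigraph. Hence $\eps_\Delta(G)\le 2$, and combined with the lower bound, $\eps_\Delta(\mathcal{OP}_n)=2$. (Alternatively one can mirror the inductive cycle-triangulation of Theorem 4 verbatim, taking base cases $n=5,8,11,14$ and introducing the two multi-edges in the base case; the ear argument is merely a shortcut that uses Theorem 4 as a black box.)

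Finally, the converse $\eps_\Delta(\mathcal{OP}_n)=2 \Rightarrow n\equiv 2\pmod 3$ is immediate: $n$ lies in exactly one residue class mod $3$, and if $n\equiv 0$ then Theorem 4 gives $\eps_\Delta(\mathcal{OP}_n)=0$, while if $n\equiv 1$ then Theorem 5 gives $\eps_\Delta(\mathcal{OP}_n)=1$; since neither value is $2$, we must have $n\equiv 2$. I expect the only genuinely delicate point to be the construction: one must verify that the augmented graph is a legitimate element of $\mathcal{OP}_n$, that both duplicated edges sit between adjacent vertices, and that the two ear-triangles are edge-disjoint from $\mathcal{T}$ (which holds because each uses only its duplicate base edge and its two new edges). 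Everything else reduces to a one-line congruence count or a citation of the earlier theorems.
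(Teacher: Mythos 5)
Your proposal is correct, and it takes a genuinely different route from the paper. The paper proves the existence direction by induction on $n$, splitting into four cases according to the congruence class of $k$ mod $4$ (where $n = 3(k-1)+2$), with explicit base constructions for $n = 5, 8, 11, 14$, an ad hoc extra example at $n = 17$ (where the inductive construction breaks down), and a layered every-other-vertex triangulation of the cycle that reduces to a smaller inner cycle; the lower bound $\eps_\Delta(G) \geq 2$ is left implicit in the same divisibility count you make explicit. Your argument instead uses Theorem 4 as a black box: since $n-2 \equiv 0 \pmod 3$, take a triangle decomposable $G' \in \mathcal{OP}_{n-2}$, attach two ears to distinct boundary edges, and duplicate the two base edges so each ear triangle consumes a fresh copy of its base edge together with its two new edges. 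This is sound --- the augmented graph is again a triangulation of a polygon, hence in $\mathcal{OP}_n$, the duplicated edges join already adjacent vertices, and the ear triangles are edge-disjoint from the fixed decomposition $\mathcal{T}$ --- and it eliminates the entire mod-$4$ case analysis and the $n = 17$ patch. It is worth noting that your shortcut is close in spirit to the paper's own \emph{alternative} proofs in Section 3.4 via simple-clique 2-trees (Figures 7 and 8 play the role of your base-edge duplications, with growth by pairs of ears), so the idea is validated by the paper, just not deployed in the appendix proof you were asked to reproduce. What each approach buys: the paper's induction yields a self-contained family of explicit layered triangulations without invoking the $n \equiv 0$ case, while yours is shorter, cleanly separates the lower bound (a one-line congruence valid for \emph{every} $G \in \mathcal{OP}_n$, which is needed since $\eps_\Delta(\mathcal{OP}_n)$ is a minimum) from the upper bound (one witness), and generalizes immediately to the $\eps_\Delta = 1$ statement with a single ear.
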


Next, let us consider the maximum number of edges required for any graph in $\mathcal{OP}_n$ to be made triangle decomposable, where we only consider edge multiplicities of order two. We define $\Xi_{\Delta}(\mathcal{OP}_{n})=\max\{\eps_{\Delta}(G):G\in
\mathcal{OP}_{n}\}.$ \\

\begin{theorem}For all $n \geq 3$, {$\Xi_n(\mathcal{OP}_n) = n - 3$.} \end{theorem}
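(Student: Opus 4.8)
The plan is to translate the problem into one about \emph{face multiplicities}. Since any $G\in\mathcal{OP}_n$ is a triangulation of an $n$-gon, it has exactly $n-2$ triangular interior faces, $n$ boundary edges each lying on a single face, and $n-3$ diagonals each lying on exactly two faces; by Lemma 3 this accounts for all $2n-3$ edges. I would first observe that the only $K_3$ subgraphs available to a triangle decomposition are these faces, since adding parallel edges between already adjacent vertices creates no new pairwise-adjacent vertex-triple. Hence a decomposition of $G$ together with its added multi-edges is the same data as an assignment of nonnegative integers $m_T$ to the faces $T$, where the final multiplicity of each edge is the sum of the $m_T$ over the faces containing it, subject to that multiplicity being at least $1$ (no edge may be deleted). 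Counting edge-copies two ways gives $\sum_e \mathrm{final}(e)=3\sum_T m_T$, so the number of added edges is $\eps_\Delta = 3\sum_T m_T-(2n-3)$; because we restrict to multiplicities of order two, minimizing the number of doubled edges is exactly minimizing $\sum_T m_T$.

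For the upper bound $\Xi_\Delta(\mathcal{OP}_n)\le n-3$, I would exhibit one uniform decomposition valid for every $G\in\mathcal{OP}_n$: take $m_T=1$ for all $n-2$ faces, that is, use each triangular face exactly once. Then every boundary edge has final multiplicity $1$ and every diagonal has final multiplicity $2$, so precisely the $n-3$ diagonals are doubled, the multiplicity-two restriction is respected, and $\eps_\Delta=3(n-2)-(2n-3)=n-3$. Thus $\eps_\Delta(G)\le n-3$ for every $G$, and no member of $\mathcal{OP}_n$ can require more than $n-3$ doubled edges.

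For the matching lower bound I would analyze the fan triangulation $F_n$, in which one vertex $v_1$ is joined to all of $v_2,\dots,v_n$ on the boundary cycle. Its faces are exactly the triangles $v_1v_iv_{i+1}$, and a short check shows these are the only $K_3$ subgraphs of $F_n$: any triangle through $v_1$ needs two boundary-consecutive neighbours, while no three of $v_2,\dots,v_n$ are pairwise adjacent. Crucially, \emph{every} face of $F_n$ contains a boundary edge $v_iv_{i+1}$, and that boundary edge lies on no other face; so any triangle decomposition must use each face at least once, forcing $m_T\ge 1$ for all faces and hence $\sum_T m_T\ge n-2$. Therefore $\eps_\Delta(F_n)\ge 3(n-2)-(2n-3)=n-3$, which together with the upper bound yields $\eps_\Delta(F_n)=n-3$ and $\Xi_\Delta(\mathcal{OP}_n)=n-3$.

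The main obstacle—really the only nontrivial point—is the structural claim that in $F_n$ the triangles usable in any decomposition are precisely the interior faces, so that each boundary edge pins down its unique covering face and forces $m_T\ge 1$ everywhere. Once that is settled, the two-way edge count and the observation that the fan has no all-diagonal (interior) face do the rest; the remaining steps are bookkeeping.
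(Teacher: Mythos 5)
Your proof is correct, and it rests on the same two pillars as the paper's own argument: the fan triangulation $F_n$ as the extremal example for the lower bound, and the use-every-face-once decomposition (boundary edges on one face, diagonals on two) for the upper bound. What you add is rigor at exactly the two places where the paper is terse. For the upper bound, the paper merely asserts that doubling outer-cycle edges is ``never necessary'' and concludes $\Xi_\Delta(\mathcal{OP}_n)\le n-3$ from the count of interior edges; you instead exhibit the explicit witness, $m_T=1$ on all $n-2$ faces, which is what actually certifies $\eps_\Delta(G)\le n-3$ for \emph{every} $G\in\mathcal{OP}_n$ and not just that boundary doublings can be avoided. For the lower bound, the paper asserts that a multi-edge ``is required on every interior edge'' of the fan without justification; you replace this per-edge claim with a cleaner double count: your structural check that every $K_3$ of $F_n$ is a face is valid (parallel edges create no new pairwise-adjacent triples, a triangle through $v_1$ needs two consecutive boundary neighbours, and $v_2,\dots,v_n$ induce a path), each boundary edge then forces its unique covering face, so at least $n-2$ triangles appear in any decomposition and $\eps_\Delta(F_n)\ge 3(n-2)-(2n-3)=n-3$. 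In short: same route and same extremal graph, but your accounting identity $\eps_\Delta = 3\sum_T m_T-(2n-3)$ turns both halves of the paper's sketch into airtight counts, and as a bonus it shows the fan's optimal augmentation is unique (the multiplicity-two cap forces $m_{T_{i-1}}+m_{T_i}\le 2$ on each diagonal, hence $m_T=1$ everywhere).
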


\begin{proof}


First, we construct a graph $G$ on $n \geq 3$ vertices such that $\eps_\Delta(G) = n -3$. Begin by forming a $C_n$. Label the vertices $v_1,v_2,..v_n$. Connect $v_1$ to all other vertices in the graph. This is a maximal outerplanar graph. Notice that all edges on the interior of the cycle lie on two triangles, while all edges on the cycle lie on exactly one triangle. This implies that a multi-edge is required on every interior edge of the graph to form a triangle decomposition. Thus, $\eps_\Delta(G) = n -3$. Therefore, $\Xi_{\Delta}(\mathcal{OP}_{n}) \geq n - 3$. \\

Conversely, notice that  edges on the outer face of maximal outerplanar graphs always lie on exactly one triangle. This implies that adding multi-edges to the outerface is never necessary when adding the minimum required number of multi-edges to a graph to form a triangle decomposition. Thus, $\Xi_{\Delta}(\mathcal{OP}_{n}) \leq n - 3$ as there are $n$ edges on the outside face. The result follows. 
\end{proof}

We have now shown that there exist graphs in $\mathcal{OP}_n$ that require the minimum number of multi-edges and maximum number of multi-edges to form a triangle decomposable graph. We can also show that there exists a graph in $\mathcal{OP}_n$ that requires every possible number of edges between $\eps_\Delta(\mathcal{OP}_n)$ and $\Xi_\Delta(\mathcal{OP}_n)$ that would ensure a graph has size divisible by 3. 

\begin{theorem}{Suppose $\Xi_{\Delta}(\mathcal{OP}_{n})=\varepsilon_{\Delta}(\mathcal{OP}%
_{n})+3k$ for some $k \in \Z$. Then, there exists a graph $G_r$ such that $\eps_\Delta(G_r) = \eps_\Delta(\mathcal{OP}) + 3r$ where $r = 1, 2, ... k - 1$.} \end{theorem}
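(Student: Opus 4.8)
The plan is to trade $\eps_\Delta$ for a purely combinatorial invariant of the triangulation, and then to realise every admissible value of that invariant by prescribing a dual tree directly. Throughout, recall that a graph in $\mathcal{OP}_n$ is exactly a triangulation of its Hamiltonian outer $n$-cycle, and that in such a graph every $3$-cycle bounds a face (a $3$-cycle cannot enclose a vertex, as all vertices lie on the outer face), so the only triangles available in any decomposition are the $n-2$ faces; denote by $F(G)$ this set of faces.

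First I would pin down the cost of a decomposition. Writing $m_f\ge 0$ for the number of times a face $f$ is used, the resulting multigraph has edge-multiplicities $\mu_e=\sum_{f\ni e}m_f$, is triangle decomposable, and the number of added multi-edges is $\sum_e(\mu_e-1)=3\sum_f m_f-(2n-3)$, since each face has three edges and $|E(G)|=2n-3$ by Lemma 3. Every edge must be covered at least once, and repeating a face never improves coverage, so
\[ \eps_\Delta(G)=3M^{*}(G)-(2n-3),\qquad M^{*}(G)=\min\{|S|:\ S\subseteq F(G)\ \text{covers every edge}\}. \]
In particular all values of $\eps_\Delta$ on $\mathcal{OP}_n$ are congruent modulo $3$ and lie between $\eps_\Delta(\mathcal{OP}_n)$ and $\Xi_\Delta(\mathcal{OP}_n)=n-3$; the two endpoints are realised by Theorems 4--6 (the minimum) and Theorem 7 (the maximum $n-3$), so it suffices to realise each intermediate value $\eps_\Delta(\mathcal{OP}_n)+3r$, $1\le r\le k-1$.

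Next I would extract the governing invariant. A face may be dropped from a cover exactly when each of its edges is covered by another face; an outer-cycle edge lies on a unique face, so that face can never be dropped, and two dropped faces may not share a diagonal (a diagonal lies on only two faces). Thus only \emph{internal} faces (those all of whose sides are diagonals, equivalently the degree-$3$ nodes of the dual tree) are droppable, and a set is droppable iff it is an independent set of internal faces; since dropping any such set still leaves a cover, the complement of a minimum cover is a \emph{maximum independent set of internal faces}. Hence $M^{*}(G)=(n-2)-\beta(G)$ and
\[ \eps_\Delta(G)=n-3-3\beta(G), \]
where $\beta(G)$ is that maximum. With $k=\beta_{\max}=(\,n-3-\eps_\Delta(\mathcal{OP}_n)\,)/3$, the theorem becomes the single assertion that $\beta$ attains every value in $\{1,\dots,k-1\}$ over $\mathcal{OP}_n$. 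To realise $\beta=b$ for a given $b$, I would use the fact that the dual of a triangulation of a convex $n$-gon is precisely a tree on $n-2$ nodes of maximum degree at most $3$, and conversely every such tree is realisable: prescribe a path $I_1-D_1-I_2-\dots-D_{b-1}-I_b$ whose $I_j$ are the $b$ degree-$3$ nodes and whose $D_j$ are degree-$2$ spacers, then hang pendant paths on the $I_j$ both to raise each to degree $3$ and to absorb the remaining $n-3-3b\ge 0$ nodes. The resulting $G\in\mathcal{OP}_n$ has internal faces exactly these $b$ pairwise non-adjacent triangles, so $\beta(G)=b$ and $\eps_\Delta(G)=n-3-3b=\eps_\Delta(\mathcal{OP}_n)+3(k-b)$; letting $b$ run through $k-1,\dots,1$ gives $r=1,\dots,k-1$.

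The genuine content is the chain of identities above, and the step I expect to be the main obstacle is the claim that a \emph{minimum} face cover is complementary to a \emph{maximum} independent set of internal faces: this rests on the three observations that outer-cycle edges pin their faces, that two adjacent internal faces cannot both be dropped, and that any independent internal set may be dropped. The remaining point is a degree-sequence check showing the tree $T_b$ with exactly $b$ mutually non-adjacent degree-$3$ nodes on $n-2$ vertices exists (using $n-3-3b\ge 0$ for $b\le k$), together with the standard realisability of maximum-degree-$3$ trees as polygon-triangulation duals. An equivalent route closer in spirit to Theorem 4 is to begin with a minimum graph realising $\beta=k$ and delete internal triangles one at a time by flipping the diagonal each shares with an adjacent ear; there the only thing to verify is locality, namely that each flip produces two non-internal faces and creates no new internal triangle, so that $\beta$ drops by exactly one at every step.
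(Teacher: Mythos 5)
Your argument is correct, and it reaches the theorem by a genuinely different route than the paper. The paper's proof is constructive and split into cases by the residue of $n$ modulo $3$: it forms $C_n$, attaches a fan from $v_1$ to $v_2,\dots,v_{3r+2}$ (which, by the argument of Theorem 7, forces $3r-1$ duplicated diagonals), adds one more multi-edge on the seam $v_1v_{3r+2}$, and triangulates the remaining $(n-3r)$-cycle optimally via Theorems 4--6, totalling $\eps_\Delta(\mathcal{OP}_n)+3r$ added edges. You instead derive the closed formula $\eps_\Delta(G)=3M^{*}(G)-(2n-3)=n-3-3\beta(G)$, where $M^{*}$ is the minimum face cover and $\beta(G)$ is the maximum independent set of degree-$3$ nodes in the dual tree, and then realize each value $\beta=b$, $1\le b\le k-1$, by prescribing the dual tree directly; your chain of reductions (every triangle of a maximal outerplanar graph bounds a face, multiplicities $m_f\in\{0,1\}$ suffice, outer edges pin their faces, droppable sets are exactly independent sets of internal faces) is sound, and the degree count $n-2\ge 3b+1$ is guaranteed since $b\le k-1$ and $3k\le n-3$. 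What your route buys is considerable: it determines $\eps_\Delta(G)$ exactly for \emph{every} $G\in\mathcal{OP}_n$, identifies $k$ intrinsically as $\beta_{\max}=(n-3-\eps_\Delta(\mathcal{OP}_n))/3$, and supplies the matching lower bounds that make $\eps_\Delta(G_r)$ \emph{equal} to (not merely at most) $\eps_\Delta(\mathcal{OP}_n)+3r$ --- a point the paper's proof handles only implicitly through ``requires'' claims inherited from the fan argument; it also subsumes Theorems 4--7 in one statement and avoids all case analysis. What the paper's route buys is self-containedness: it reuses only its own earlier constructions. Two steps in your write-up deserve explicit support: the folklore fact that every tree on $n-2$ nodes with maximum degree at most $3$ is the weak dual of a triangulation of a convex $n$-gon (a one-line induction --- remove a leaf, glue an ear --- should be stated or cited), and, if you keep the alternative diagonal-flip ending, the locality claim there, which you correctly flag as unverified; the main route does not depend on it.
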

The proof of Theorem 8 can be found in the appendix.  \\

\subsection{$k$-Outerplanar Graphs}

A simple extension of the class of outerplanar graphs are $k$-outerplanar graphs and so this class of graphs is a natural one to explore next. \\

A 1-\emph{outerplanar graph} is simply an outerplanar graph. For all $k \geq 2$, a graph is \emph{$k$-outerplanar} if it has a planar embedding and when all vertices on the outer-face are removed, it leaves a $(k-1)$-outerplanar graph. We call the removal of vertices on the outerface the \emph{onion peel} of the graph. A $k$-outerplanar graph has $k$ onion peel subgraphs. \\

We define $\mathcal{KOP}_m$ to be the set of all $k$-outerplanar graphs with $m\cdot k$ vertices, that is, each layer has $m$ vertices. The results in Section 3.1 allows us to construct graphs in $\mathcal{KOP}_m$ that require the minimum number of multi-edges to form a triangle decomposition. \\

\begin{theorem}
There exists a graph $G$ in $\mathcal{KOP}_n$ such that $\eps_\Delta(G)$ is the equal to the least residue of $m \text{\emph{ (mod 3)}}$.  
\end{theorem}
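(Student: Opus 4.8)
The plan is to build an explicit member of $\mathcal{KOP}_m$ out of $k$ concentric copies of $C_m$ and then reduce its triangle decomposition to the maximal outerplanar case already settled in Theorems 4--6. Let the $j$-th layer be the cycle $L_j$ on vertices $v^j_1,\dots,v^j_m$, and join consecutive layers by the standard triangulated annulus: for each $j$ and each $i$ I would add the spoke $v^j_i v^{j+1}_i$ and the diagonal $v^j_i v^{j+1}_{i+1}$, so that the band between $L_j$ and $L_{j+1}$ splits, in each cell $i$, into the two triangles $v^j_i v^j_{i+1} v^{j+1}_{i+1}$ and $v^j_i v^{j+1}_{i+1} v^{j+1}_i$. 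Finally I would triangulate the interior of the innermost cycle $L_k$ as a maximal outerplanar graph on $m$ vertices. Peeling off $L_1$ leaves exactly the $(k-1)$-layer version of the same construction, so $G$ is genuinely $k$-outerplanar and lies in $\mathcal{KOP}_m$.

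For the lower bound I would simply count edges. The cycles $L_1,\dots,L_{k-1}$ contribute $m(k-1)$ edges outside the inner disk, the $k-1$ annuli contribute $2m(k-1)$ spokes and diagonals, and the inner maximal outerplanar graph contributes $2m-3$ edges (including those of $L_k$), giving $|E(G)| = 3mk - m - 3 \equiv -m \pmod 3$. Since any triangle-decomposable multigraph has edge count divisible by $3$, adjoining $t$ multi-edges forces $t \equiv m \pmod 3$, so $\eps_\Delta(G)$ is at least the least residue $r$ of $m \pmod 3$.

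For the matching upper bound I would exhibit a partial decomposition that strips the annuli away and leaves precisely a maximal outerplanar graph. Select, in every annulus, only the outward triangle $v^j_i v^j_{i+1} v^{j+1}_{i+1}$ of each cell. A direct check of multiplicities shows these triangles cover every spoke once, every diagonal once, and every cycle edge of $L_1,\dots,L_{k-1}$ once, while the inward triangles (which carry the edges of the inner cycles) are never chosen, so the edges of $L_k$ and the interior chords remain completely untouched. The uncovered remainder is exactly the maximal outerplanar graph triangulating $L_k$, which by Theorems 4--6 becomes triangle decomposable after adding $r$ multi-edges between already adjacent vertices. Splicing that decomposition onto the selected annulus triangles decomposes all of $G$ together with the $r$ extra edges, giving $\eps_\Delta(G) \le r$ and hence equality.

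The main obstacle is this middle step: choosing the annulus triangulation and the subfamily of triangles so that the leftover edge set is exactly a maximal outerplanar graph rather than some harder-to-analyze fragment. The bookkeeping that each spoke, each diagonal, and each non-innermost cycle edge is covered exactly once --- and that every innermost-cycle edge is covered zero times --- is what makes the reduction to Theorems 4--6 clean. Once that incidence count is verified, the residue computation supplies the lower bound and the existing outerplanar results supply the upper bound, and the two meet at $r = $ the least residue of $m \pmod 3$.
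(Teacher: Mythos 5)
Your proposal is correct and matches the paper's own proof in all essentials: the same graph (concentric copies of $C_m$ joined by a perfect matching plus clockwise diagonals, with the innermost cycle triangulated as the maximal outerplanar graph from Theorems 4--6), and the same decomposition, since your ``outward'' triangles $v^j_i v^j_{i+1} v^{j+1}_{i+1}$ are exactly the annulus triangles of the paper's layer-by-layer construction, covering each cycle edge of $L_1,\dots,L_{k-1}$, each spoke, and each diagonal once while leaving the inner graph untouched. The only difference is that you make the lower bound explicit via the edge count $|E(G)| = 3mk - m - 3 \equiv -m \pmod 3$, a divisibility step the paper leaves implicit, which slightly strengthens the write-up rather than changing the approach.
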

\begin{proof}

For $k = 1$, the results follows directly from Theorems 5-7. Assume $k > 1$. \\

From Theorems 5-7 it follows that there exists am outerplanar graph $G'$ such that $G'$ has $m$ vertices where $\eps_\Delta(G')$ is equal to the least residue of $m \text{ (mod 3)}$. Add a $C_{m}$ graph such that $G'$ lies completely inside the cycle. Join the vertices of $C_m$ to those of $G'$ by a perfect matching in such a way that no edges overlap. Next, connect each vertex in $C_{m}$ to the vertex that is clockwise from the vertex it is already connected to in $G'$. This will form triangles around the edge of the graph. This new graph $G''$ is a 2-outerplanar, triangle decomposable graph. We can continue adding vertices in this manner to eventually form a $k$-outerplanar graph that is triangle decomposable. See Figure 3. 
\end{proof}

\begin{figure}[th!]
\centering
\begin{tikzpicture}
\hspace{-3cm}
\vspace{-10cm}
\begin{scope}
[scale=.8,auto=right,every node/.style={circle,fill=gray!30},nodes={circle,draw, minimum size=.001mm,inner sep=2pt}]
\foreach \lab/\ang in {a/0,b/60,c/120,d/180,e/240,f/300}
{\node(\lab) at(\ang:1.5){};}
\draw[very thick, red] (a) -- (b) -- (c) -- (a);
\draw[very thick, green] (c) -- (d) -- (e) -- (c);
\draw[very thick, blue] (e) -- (f) -- (a) -- (e);
\end{scope}
\hspace{3cm}
$\rightarrow$
\hspace{4cm}
\begin{scope}
[scale=.8,auto=right,every node/.style={circle,fill=gray!30},nodes={circle,draw, minimum size=.001mm,inner sep=2pt}]
\foreach \lab/\ang in {a/0,b/60,c/120,d/180,e/240,f/300}
{\node(\lab) at(\ang:1.5){};}
\foreach \lab/\ang in {a'/0,b'/60,c'/120,d'/180,e'/240,f'/300}
{\node(\lab) at(\ang:3){};}
\draw[very thick, red] (a) -- (b) -- (c) -- (a);
\draw[very thick, green] (c) -- (d) -- (e) -- (c);
\draw[very thick, blue] (e) -- (f) -- (a) -- (e);
\draw[very thick, orange] (a') -- (a) -- (b') -- (a');
\draw[very thick, pink] (b') -- (b) -- (c') -- (b');
\draw[very thick, black] (c') -- (c) -- (d') -- (c');
\draw[very thick, cyan] (d') -- (d) -- (e') -- (d');
\draw[very thick, brown] (e') -- (e) -- (f') -- (e');
\draw[very thick, black] (f') -- (f) -- (a') -- (f');
\draw[very thick, yellow] (f') -- (f) -- (a') -- (f');
\end{scope}
\end{tikzpicture}
\caption{An construction of a 2-outerplanar graph with order 12, where each layer has 6 vertices. }
\end{figure}
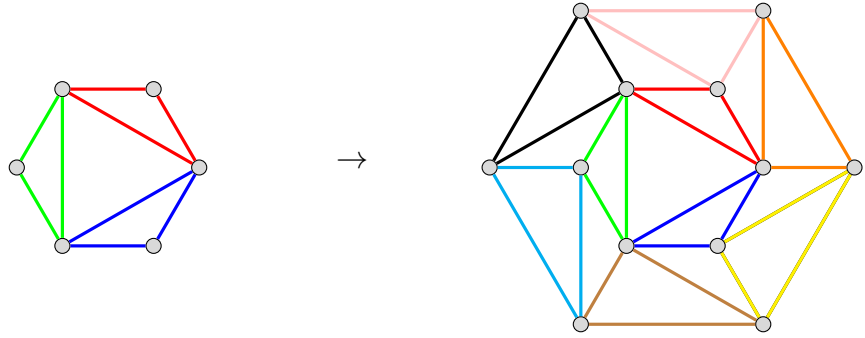
\subsection{Maximal Planar Hamiltonian Graphs}

Having considered different types of maximal outerplanar graphs, it seems natural to wonder whether we could extend our results to maximal planar Hamiltonian graphs as all maximal outerplanar graphs of order at least 3 are Hamiltonian, by definition. We begin with a simple lemma that handles the exceptions of our main theorem for this graph class. \\

\begin{lemma}{There does not exist a maximal planar Hamiltonian graph with order $n$ that is triangle decomposable if $n = 4, 5, 7$.}\end{lemma}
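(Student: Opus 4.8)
The plan is to use Corollary 2, which tells us that for maximal planar graphs, triangle decomposability is equivalent to being Eulerian. Thus it suffices to show that no maximal planar graph of order $n\in\{4,5,7\}$ is Eulerian; this is in fact stronger than the stated lemma, since we will not even need to invoke Hamiltonicity. Throughout I would use two standard structural facts: a maximal planar (triangulated) graph on $n$ vertices has exactly $3n-6$ edges, and for $n\geq 4$ every vertex has degree at least $3$. Since an Eulerian graph has all degrees even, every vertex of a hypothetical Eulerian triangulation of order $n\geq 4$ would have degree at least $4$. The whole argument then becomes a degree-counting argument supplemented, in the hardest case, by a structural observation.

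The cases $n=4$ and $n=5$ I expect to dispatch quickly. For $n=4$ the only maximal planar graph is $K_4$, whose vertices all have odd degree $3$, so it is not Eulerian. For $n=5$ the edge count gives degree sum $2(3\cdot 5-6)=18$; each degree lies between $3$ and $4$, so an Eulerian graph would force every degree to equal $4$, yielding degree sum $20\neq 18$, a contradiction. Hence no Eulerian maximal planar graph of order $4$ or $5$ exists.

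The case $n=7$ is the genuine obstacle, because the degree argument alone does not immediately close. Here $2(3\cdot 7-6)=30$, so an Eulerian triangulation would have seven even degrees, each at least $4$, summing to $30$; since the all-$4$ baseline sums to $28$ and increments come in steps of $2$, the degree sequence is forced to be the multiset $\{6,4,4,4,4,4,4\}$ (and $6$ is attainable as the maximum possible degree on $7$ vertices). I would then argue that this sequence is not realizable by a triangulation. Let $v$ be the vertex of degree $6$; in a triangulation the neighbourhood of $v$ induces its link cycle, here a $6$-cycle $u_1u_2\cdots u_6$, and deleting $v$ leaves a triangulation of the hexagonal disc bounded by this cycle, i.e.\ a maximal outerplanar graph on the six vertices $u_1,\dots,u_6$ (consistently, it has $2\cdot 6-3=9=15-6$ edges, matching Lemma 3). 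Now I would invoke the well-known fact that every triangulation of a polygon on at least four vertices has at least two ears, hence at least two vertices of degree $2$ in $G-v$. Each such vertex gains exactly one from its edge to $v$, so it has degree $3$ in $G$, contradicting the requirement that all six $u_i$ have degree $4$. Therefore no Eulerian maximal planar graph of order $7$ exists either, and the lemma follows. The main difficulty is precisely this last reduction from the forced degree sequence to the ear-counting contradiction; once the degree-$6$ vertex is peeled off and $G-v$ is recognized as a triangulated hexagon, the two-ears theorem finishes the argument.
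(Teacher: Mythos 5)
Your proposal is correct, and in its skeleton it matches the paper: both reduce via Corollary 2 to showing no Eulerian maximal planar graph of order $4$, $5$, or $7$ exists, both dispatch $n=4$ with $K_4$, and both force, for $n=7$, the degree sequence $6,4,4,4,4,4,4$ from the degree sum $2(3n-6)=30$. The divergence is in how the two harder cases are closed. For $n=5$ the paper identifies the unique maximal planar graph of that order, $K_5 - e$, and observes it is not Eulerian, whereas you argue purely by counting: even degrees lying in $\{3,4\}$ would force degree sum $20 \neq 18$. For $n=7$, both arguments note that deleting the degree-$6$ vertex $v$ leaves a cubic graph on six vertices; the paper then enumerates the two cubic graphs of order $6$, namely $K_{3,3}$ (non-planar, so impossible as an induced subgraph) and the prism $K_2 \square K_3$ (for which it asserts that no planar embedding admits an added universal vertex), while you bypass the enumeration entirely: since $v$ is adjacent to all six remaining vertices, its link is a spanning $6$-cycle and $G-v$ is a triangulated hexagon, i.e.\ a maximal outerplanar graph, which by the two-ears fact has at least two vertices of degree $2$, hence of degree $3$ in $G$, contradicting the forced degree $4$. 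Your route buys a cleaner finish: it avoids knowing the census of cubic graphs of order $6$ and replaces the paper's somewhat informal claim that ``no redrawing of this graph allows for the addition of a universal vertex while maintaining planarity'' with a standard structural fact (and it dovetails nicely with the paper's own Section 3.1, where maximal outerplanar graphs are the central objects). The cost is that you must justify that the neighbourhood of $v$ induces a spanning cycle, which rests on the $3$-connectivity of maximal planar graphs of order at least $4$, and invoke the two-ears theorem; both are classical, but the paper's enumeration is more self-contained at the price of an embedding argument it leaves at the level of inspection.
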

\begin{proof}{ By Euler's identity, a maximal planar graph of order $n \geq 3$ has size $3n -6$. \\

Suppose $n = 4$. Notice that the only maximal planar graph with order 4 is $K_4$. Recall from Mynhardt and Van Bommel \cite{kieka} that maximal planar graphs are triangle decomposable if and only if they are Eulerian. The graph $K_4$ is not Eulerian and so the result follows. \\

Suppose $n = 5$. Then, notice that that the only maximal planar graph with order 5 is $K_5 - e$ (where $e$ is any edge) which is not an Eulerian graph and so the result follows.\\

Suppose $n = 7$. Then, notice that the only possible Eulerian maximal planar graph $G$ of order 7, which has degree sum 30, would contain six vertices of degree 4 and one vertex of degree 6. This implies that there is a cubic graph of order 6 that is an induced subgraph of $G$. There exist only two cubic graphs of order 6, namely $K_{3,3}$ and the following graph, which is $K_2 \square K_3$.
$$\begin{tikzpicture}
[scale=.5,auto=right,every node/.style={circle,fill=gray!30},nodes={circle,draw, minimum size=.01cm, inner sep = 2pt}]
\foreach \lab/\ang in{1/90,2/210,3/330}
\node(\lab)at(\ang:1.5){};
\foreach \lab/\ang in{a/90,b/210,c/330}
\node(\lab)at(\ang:4){};
\draw (1) -- (2) -- (3) -- (1);
\draw (a) -- (b) -- (c) -- (a);
\draw(a) -- (1);
\draw (b) -- (2);
\draw (c) -- (3);
\end{tikzpicture}$$
Notice that no redrawing of this graph allows for the addition of a universal vertex while maintaining planarity. Thus, there is no Eulerian maximal planar graph with order 7. }
\end{proof}

We now prove the existence of a triangle decomposable Hamiltonian maximal planar graph of order $n \geq 6$, $n \neq 7$. \\

\begin{theorem}{For all $n \geq 6$, where $n \neq 7$, there exists a triangle decomposable Hamiltonian maximal planar graph with order $n$. }\end{theorem}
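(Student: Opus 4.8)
The plan is to apply Corollary~2, which collapses the problem to a purely structural one: since a maximal planar graph is triangle decomposable if and only if it is Eulerian, it suffices to construct, for each admissible $n$, a maximal planar graph that is simultaneously Hamiltonian and has all degrees even. Recall that a maximal planar graph on $n$ vertices has $3n-6$ edges, so its degree sum $6n-12$ is automatically even; the only real obstruction is forcing \emph{every} individual degree to be even, which is precisely what fails for $n=4,5,7$ by the preceding lemma. Thus $n=7$ is genuinely impossible, explaining the exception.

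First I would dispatch the even orders with an explicit symmetric family: the bipyramid (double wheel) over a cycle $C_{n-2}$, whose rim vertices have degree $4$ and whose two apexes $a,b$ have degree $n-2$. All degrees are even exactly when $n$ is even, the graph is a triangulation, and writing the rim as $u_1,\dots,u_{n-2}$ one obtains the explicit Hamiltonian cycle $a,u_1,u_2,\dots,u_{n-3},b,u_{n-2},a$. This settles every even $n\geq 6$ (the case $n=6$ being the octahedron).

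The odd orders carry the real content, since the doubly symmetric bipyramid yields only even orders. I would argue by induction with step~$2$, taking as base an explicit Eulerian Hamiltonian triangulation on $9$ vertices --- for instance, insert a small triangle $xyz$ into one face $Nab$ of the octahedron as an antiprism band by joining $x$ to $N,a$, then $y$ to $a,b$, then $z$ to $b,N$; one checks that all nine degrees become $4$ or $6$ and that $S,c,N,z,b,y,x,a,d$ is a Hamiltonian cycle. The inductive engine is a local \emph{$+2$ gadget}: given a triangulation $T$ with a Hamiltonian cycle and an edge $uv$ of that cycle, let $x,y$ be the apexes of the two triangles $uvx,uvy$ incident with $uv$; delete $uv$, place new vertices $p,q$ inside the resulting quadrilateral $x\,u\,y\,v$, and add the edges $up,xp,yp,vq,xq,yq,pq$. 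A direct check shows that (i) the region retriangulates into six triangles, so the graph stays maximal planar; (ii) $u$ and $v$ each lose one edge and gain one, while $x,y$ each gain two and $p,q$ have degree $4$, so all parities are preserved and the graph stays Eulerian; and (iii) replacing the cycle edge $uv$ by the path $u\,p\,q\,v$ (whose edges $up,pq,qv$ were just added) restores a Hamiltonian cycle through the two new vertices. Since every edge of a triangulation lies on exactly two triangles, the gadget can always be applied, so from the $9$-vertex base I reach every odd $n\geq 9$.

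The main obstacle, where I would concentrate the care, is twofold: producing a correct \emph{odd} base case, since no doubly symmetric construction yields an odd-order Eulerian triangulation, and verifying that the $+2$ gadget preserves all three properties at once. The delicate points are that no multi-edge is created (every added edge is incident to a new vertex, and $xy$ is never added) and that deleting $uv$ leaves a genuine quadrilateral to retriangulate. Hamiltonicity is the property most easily destroyed by local surgery, so the gadget is designed precisely so that the edge it removes is one the cycle can re-route through $p$ and $q$; confirming that this rerouting is always legal is the crux of the argument.
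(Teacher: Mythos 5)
Your proposal is correct, and while your even case coincides with the paper's, your odd case takes a genuinely different route. The paper also reduces via Corollary~2 to constructing Eulerian Hamiltonian triangulations and, for even $n$, uses exactly your double wheel over $C_{n-2}$. For odd $n$, however, the paper gives a single explicit construction verified in one shot: take $C_{n-2}$, add the chord $v_1v_3$, join an inner hub $v_{n-1}$ to the $(n-3)$-cycle $v_1,v_3,v_4,\dots,v_{n-2}$, add the outer edges $v_2v_{n-2}$, $v_2v_4$, $v_4v_{n-2}$, and join an outer hub $v_n$ to $v_4,\dots,v_{n-2}$; all degrees come out even and a Hamiltonian cycle is exhibited directly. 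You instead run an induction of step $2$ from a hand-built $9$-vertex base, using a local $+2$ vertex-splitting gadget that provably preserves maximal planarity, all-even degrees, and Hamiltonicity simultaneously. Your invariant checks are right: the region retriangulates into the six faces $xup$, $uyp$, $xpq$, $pyq$, $xvq$, $yvq$; the edge count moves from $3n-6$ to $3(n+2)-6$; the parities of $u,v$ are unchanged while $x,y$ gain $2$ and $p,q$ have degree $4$; and rerouting $uv$ through $u,p,q,v$ restores a Hamiltonian cycle whose new edges can seed the next step. Your base case also checks out ($N,a,b$ reach degree $6$, the rest degree $4$, and $S,c,N,z,b,y,x,a,d$ is Hamiltonian). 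Two small precision points: the apexes $x,y$ should be taken as the third vertices of the two triangular \emph{faces} incident with $uv$ (an edge of a triangulation can lie on more than two triangles, though always on exactly two triangular faces), and the needed fact $x\neq y$ holds because a simple maximal planar graph of order at least $4$ cannot have both faces at an edge bounded by the same triangle --- harmless here since your induction starts at order $9$. As for what each approach buys: the paper's one-shot construction is shorter and self-contained but must be checked globally and produces a single graph per order; your surgery argument carries the burden of verifying three invariants at once, but it is more flexible --- it can be applied at any Hamiltonian-cycle edge, so it generates many Eulerian Hamiltonian triangulations of each order, and starting it from the octahedron would in fact dispatch the even orders as well, making the double wheel optional.
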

\begin{proof}{

\textbf{Case 1:} $n \geq 6$ and $n \equiv 0 \text{ (mod } 2):$\\

A graph on $n$ vertices can be constructed as follows, see Figure 4:
\begin{enumerate}
\item Form a $C_{n-2}$ graph. Label the vertices around the cycle $v_1$, $v_2$, ... $v_{n-2}$ going around the cycle.  
\item Place an additional vertex $v_{n-1}$ on the inside of the cycle and draw edges between it and all vertices on the cycle.
\item Place an additional vertex $v_n$ outside the cycle and draw edges between it and all vertices on the cycle.
\end{enumerate}

\begin{figure}[h]
\hspace{-2cm}
\begin{tikzpicture}
\begin{scope}
[scale=.35,auto=right,every node/.style={circle,fill=gray!40},nodes={circle,draw, minimum size=.01cm, inner sep = 2pt},every path/.style={}]
\foreach \lab/\ang in {1/90,2/140,3/190,4/240,5/290,{n-2}/30}
\node(\lab)at(\ang:4){\small{$v_{\lab}$}};
\draw({n-2}) -- (1);
\draw[](1) -- (2)--(3)--(4);
\draw(4)--(5);
\draw[ thick,dashed](5) to[out=30,in=290] ({n-2});
\end{scope}
\hspace{3cm}
$\rightarrow$
\hspace{3cm}
\begin{scope}
[scale=.35,auto=right,every node/.style={circle,fill=gray!40},nodes={circle,draw, minimum size=.01cm, inner sep = 2pt},every path/.style={}]
\foreach \lab/\ang in {1/90,2/140,3/190,4/240,5/290,{n-2}/30}
\node(\lab)at(\ang:4){\small{$v_{\lab}$}};
\node(7)at(0,0){\small{$v_{n-1}$}};
\draw({n-2}) -- (1);
\draw(1) -- (2)--(3)--(4);
\draw(4)--(5);
\draw[ thick,dashed](5) to[out=30,in=290] ({n-2});
\draw(1) -- (7);
\draw(7) -- (2) ;
\draw(3) -- (7) -- (4);
\draw(5) -- (7);
\draw(7) -- ({n-2});
\end{scope}
\hspace{3cm}
$\rightarrow$
\hspace{4cm}
\begin{scope}
[scale=.35,auto=right,every node/.style={circle,fill=gray!40},nodes={circle,draw, minimum size=.01cm, inner sep = 2pt},every path/.style={}]
\foreach \lab/\ang in {1/90,2/140,3/190,4/240,5/290,{n-2}/30}
\node(\lab)at(\ang:4){\small{$v_{\lab}$}};
\node(7)at(0,0){\small{$v_{n-1}$}};
\node(8)at(-8,0){\small{$v_{n}$}};
\draw({n-2}) -- (1);
\draw(1) -- (2)--(3)--(4);
\draw(4)--(5);
\draw[ thick,dashed](5) to[out=30,in=290] ({n-2});
\draw(1) -- (7);
\draw(7) -- (2) ;
\draw(3) -- (7) -- (4);
\draw(5) -- (7);
\draw(7) -- ({n-2});
\draw(8) -- (3);
\draw(8) -- (2);
\draw(8) -- (4);
\draw(1) to[out = 170, in = 70] (8);
\draw({n-2}) to[out = 110, in = 85] (8);
\draw(5) to[out = 210, in = 290] (8);
\end{scope}
\end{tikzpicture}\\
\hspace{5cm}
\begin{tikzpicture}
\hspace{1cm}
$\rightarrow$
\hspace{5cm}
\begin{scope}
[scale=.4,auto=right,every node/.style={circle,fill=gray!40},nodes={circle,draw, minimum size=.01cm,, inner sep = 2pt},every path/.style={}]
\foreach \lab/\ang in {1/90,2/140,3/190,4/240,5/290,{n-2}/30}
\node(\lab)at(\ang:4){\scriptsize{$v_{\lab}$}};
\node(7)at(0,0){\scriptsize{$v_{n-1}$}};
\node(8)at(-8,0){\scriptsize{$v_{n}$}};
\draw({n-2}) -- (1);
\draw[red,very thick](1) -- (2)--(3)--(4);
\draw(4)--(5);
\draw[red,very thick,dashed](5) to[out=30,in=290] ({n-2});
\draw[red,very thick](1) -- (7);
\draw(7) -- (2) ;
\draw(3) -- (7) -- (4);
\draw(5) -- (7);
\draw[red,very thick](7) -- ({n-2});
\draw(8) -- (3);
\draw(8) -- (2);
\draw[red,very thick](8) -- (4);
\draw(1) to[out = 170, in = 70] (8);
\draw({n-2}) to[out = 110, in = 85] (8);
\draw[red,very thick](5) to[out = 210, in = 290] (8);
\end{scope}
\end{tikzpicture}
\caption{Construction of a maximal planar, Hamiltonian graph on $n$ vertices, $n$ even.}
\end{figure}
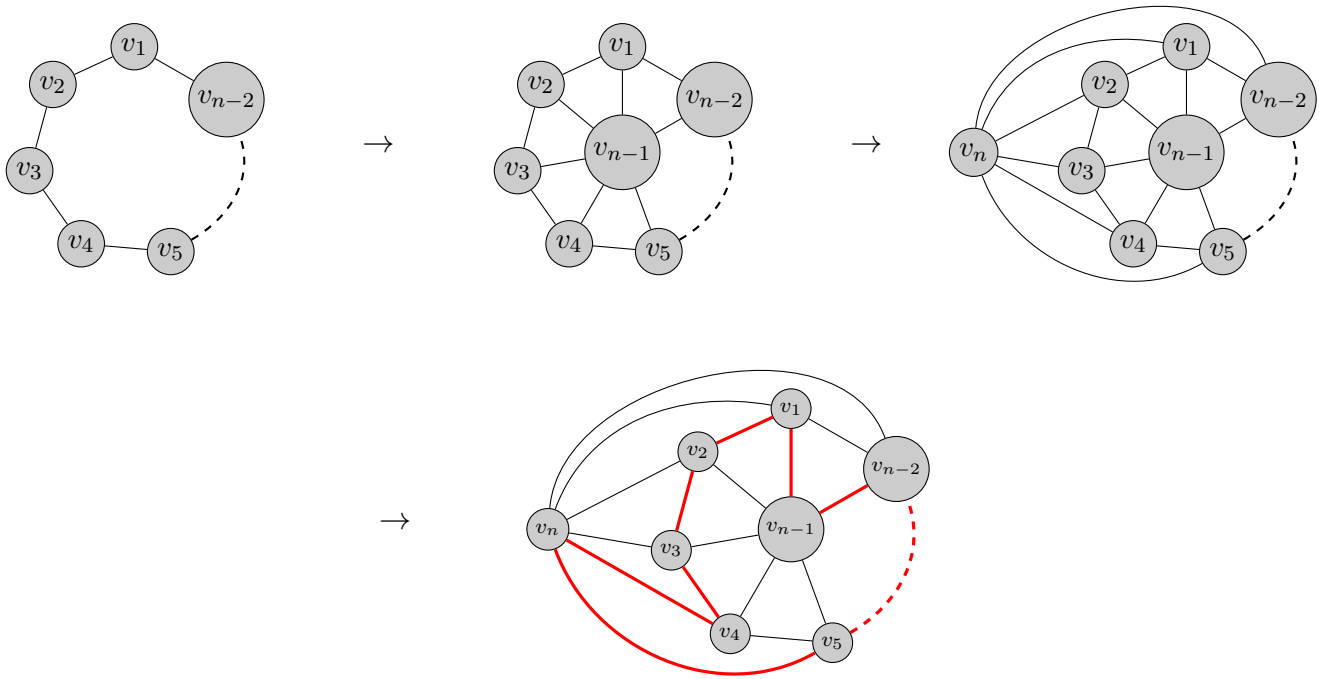

The resulting graph is Eulerian, as by construction all vertices have even degree. Further, it is Hamiltonian as can be seen in Figure 4. From Corollary 2 it follows that the graph is triangle decomposable. \\

\textbf{Case 2:} $n \geq 9$ and $n \equiv 1 \text{ (mod } 2)$:\\
}

A graph on $n$ vertices can be constructed as follows, see Figure 5: 
\begin{enumerate}
\item Form a $C_{n-2}$ graph. Label the vertices $v_1, v_2,...v_{n-2}$.
\item Connect $v_1$ and $v_3$ to form a triangle. 
\item Place an additional vertex $v_{n-1}$ on the inside of the cycle formed by $v_1$, $v_3$, $v_4$,...$v_{n-2}$ and draw edges between it and all vertices on the cycle.
\item Draw edges on the outside of the cycle connecting $v_{n-2}$ and $v_2$, $v_4$ and $v_2$, and $v_{n-2}$ and $v_4$. 
\item Place an additional vertex $v_n$ outside the outermost cycle of the graph. Connect it to all vertices on the outermost cycle, namely $v_4, v_5, v_6,...v_{n-2}$. 
\end{enumerate}

\begin{figure}[h]
\vspace{-2.75cm}
\hspace{-3cm}
\begin{tikzpicture}
\begin{scope}
[scale=.4,auto=left,every node/.style={circle,fill=gray!30},nodes={circle,draw, minimum size=.01cm, inner sep = 2pt}]
\foreach \lab/\ang in {1/90,2/135,3/180,4/225,5/270,6/315,{n-2}/35}
\node(\lab)at(\ang:4){\scriptsize{$v_{\lab}$}};
\draw({n-2})--(1) -- (2) -- (3) -- (4) -- (5) -- (6);
\draw[dashed, thick](6)to[out=60,in=290]({n-2});\end{scope}
\hspace{3cm}
$\rightarrow$
\hspace{3cm}
\begin{scope}
[scale=.4,auto=right,every node/.style={circle,fill=gray!40},nodes={circle,draw, minimum size=.01cm, inner sep = 2pt},every path/.style={}]
\foreach \lab/\ang in {1/90,2/135,3/180,4/225,5/270,6/315,{n-2}/35}
\node(\lab)at(\ang:4){\scriptsize{$v_{\lab}$}};
\draw({n-2})--(1) -- (2) -- (3) -- (4) -- (5) -- (6);
\draw(1) -- (3);
\draw[dashed, thick](6)to[out=60,in=290]({n-2});
\node(8)at(0,0){\scriptsize$v_{n-1}$};
\draw(1) -- (8) -- ({n-2});
\draw(3) -- (8) -- (4);
\draw(5) -- (8) -- (6);
\end{scope}
\hspace{2.5cm}
$\rightarrow$
\hspace{3cm}
\begin{scope}
[scale=.4,auto=right,every node/.style={circle,fill=gray!40},nodes={circle,draw, minimum size=.01cm, inner sep = 2pt},every path/.style={}]
\foreach \lab/\ang in {1/90,2/135,3/180,4/225,5/270,6/315,{n-2}/35}
\node(\lab)at(\ang:4){\scriptsize{$v_{\lab}$}};
\draw({n-2})--(1) -- (2) -- (3) -- (4) -- (5) -- (6);
\draw(1) -- (3);
\draw[dashed, thick](6)to[out=60,in=290]({n-2});
\node(8)at(0,0){\scriptsize$v_{n-1}$};
\draw(1) -- (8) -- ({n-2});
\draw(3) -- (8) -- (4);
\draw(5) -- (8) -- (6);
\draw({n-2}) to[out=110, in=70] (2);
\draw(4) to[in=200, out=150] (2);
\draw({n-2}) to[out=90, in=180,looseness=2.5] (4);
\end{scope}
\end{tikzpicture}
\end{figure}
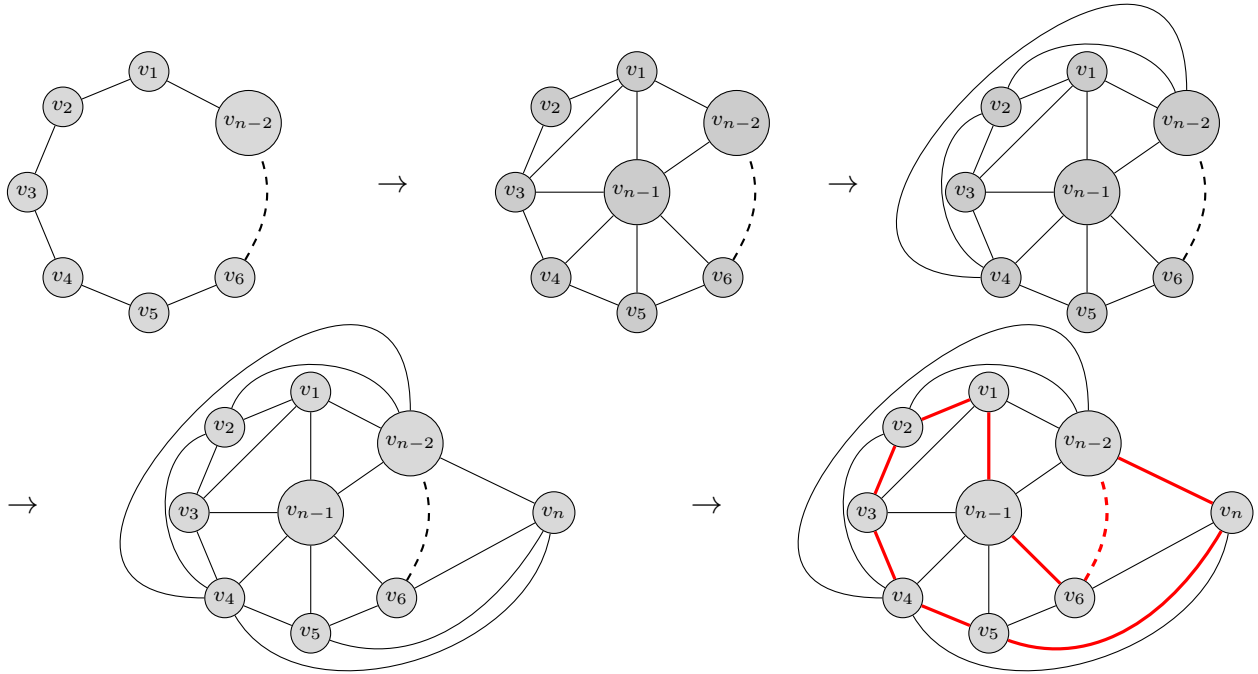
\begin{figure}[th!]
\centering
\vspace{-3cm}
\begin{tikzpicture}
\begin{scope}
\hspace{-9cm}
$\rightarrow$
\end{scope}
\hspace{-5cm}
\begin{scope}
[scale=.4,auto=left,every node/.style={circle,fill=gray!30},nodes={circle,draw, minimum size=.01cm, inner sep = 2pt}]
\foreach \lab/\ang in {1/90,2/135,3/180,4/225,5/270,6/315,{n-2}/35}
\node(\lab)at(\ang:4){\scriptsize{$v_{\lab}$}};
\node(8)at(0,0){\scriptsize$v_{n-1}$};
\node(9)at(8,0){\scriptsize$v_n$};
\draw({n-2})--(1);
\draw[](1)--(2)--(3)--(4)--(5);
\draw(5)--(6);
\draw[dashed, thick](6)to[out=60,in=290]({n-2});
\draw[](1)--(8);
\draw(1)--(3);
\draw(3)--(8);
\draw(4)--(8);
\draw(5)--(8);
\draw[](6)--(8);
\draw({n-2})--(8);
\draw({n-2}) to[out=110, in=70] (2);
\draw(4) to[in=200, out=150] (2);
\draw({n-2}) to[out=90, in=180,looseness=2.5] (4);
\draw[]({n-2})--(9);
\draw(6)--(9);
\draw[](5) to[out = 340, in =240](9);
\draw(4) to[out = 300, in =260](9);
\end{scope}
\hspace{5cm}
$\rightarrow$
\hspace{3.5cm}
\begin{scope}
[scale=.4,auto=left,every node/.style={circle,fill=gray!30},nodes={circle,draw, minimum size=.01cm, inner sep = 2pt}]
\foreach \lab/\ang in {1/90,2/135,3/180,4/225,5/270,6/315,{n-2}/35}
\node(\lab)at(\ang:4){\scriptsize{$v_{\lab}$}};
\node(8)at(0,0){\scriptsize$v_{n-1}$};
\node(9)at(8,0){\scriptsize$v_n$};
\draw({n-2})--(1);
\draw[very thick, red](1)--(2)--(3)--(4)--(5);
\draw(5)--(6);
\draw[dashed,very thick, red](6)to[out=60,in=290]({n-2});
\draw[very thick, red](1)--(8);
\draw(1)--(3);
\draw(3)--(8);
\draw(4)--(8);
\draw(5)--(8);
\draw[very thick, red](6)--(8);
\draw({n-2})--(8);
\draw({n-2}) to[out=110, in=70] (2);
\draw(4) to[in=200, out=150] (2);
\draw({n-2}) to[out=90, in=180,looseness=2.5] (4);
\draw[very thick, red]({n-2})--(9);
\draw(6)--(9);
\draw[very thick, red](5) to[out = 340, in =240](9);
\draw(4) to[out = 300, in =260](9);
\end{scope}
\end{tikzpicture}
\caption{Construction of a maximal planar, Hamiltonian graph on $n$ vertices, $n$ odd.}
\end{figure}

The resulting graph is Eulerian, as by construction all vertices have even degree. Further, it is Hamiltonian as can be seen in Figure 5. From Corollary 2 it follows that the graph is triangle decomposable. 
\end{proof}

Therefore we have proven the existence of triangle decomposable Hamiltonian maximal planar graphs with order $n \geq 3$ where $n \neq 4, 5, 7$. \\

\subsection{Simple-Clique 2-Trees}

Maximal planar graphs can be constructed recursively as simple-clique 2-trees. A \emph{k-tree} is a graph with $n \geq k$ vertices that is created by beginning with a $K_{k}$ and then adding additional vertices by connecting each new vertex to an existing $K_k$ in the graph. A \emph{simple-clique k-tree} is a $k$-tree such that when adding new vertices, each existing $K_k$ can be used at most once. We focus on simple-clique 2-trees, that is 2-trees with no edge on 3 or more triangles. \\

Simple-clique 2-trees and maximal outerplanar graphs are equivalent classes of graphs. Both maximal outerplanar graphs and simple-clique 2-trees have forbidden minors $K_4$ and $K_{2,3}$ \cite{sc3}. Therefore, our results in Section 3.1 follow for the class of simple-clique 2-trees. As simple-clique 2-trees are a subset of 2-trees, we have shown the existence of such graphs with minimal required multi-edges between already adjacent vertices for 2-trees. Let $2\mathcal{SC}_n$ be the set of all simple-clique 2-trees of order $n$. \\

Below, we provide an alternative constructive proof of Theorem 4 using the method of constructing simple-clique 2-trees. This proof method is simpler, as we need not consider congruence classes mod 4. \\

\textbf{Theorem 4. (V2)} \emph{For $n\geq3$, $\eps_\Delta(2\mathcal{SC}_n) = 0$ if and only if $n \equiv 0 \emph{\text{ (mod 3)}}$. }
\begin{proof}
First, suppose $\eps_\Delta(2\mathcal{SC}_n) = 0$. Then there exists $G \in 2\mathcal{SC}_n$ where $\eps_\Delta(G) = 0$. Therefore, $|E(G)| \equiv 0 \text{ (mod 3)}$. As $G$ is a simple-clique 2-tree, it follows that $|E(G)| = 2n - 3$. Therefore, $2n - 3 \equiv 0 \text{ (mod 3)}$. This implies that $n \equiv 0 \text{ (mod 3)}$, as desired. \\

Conversely, suppose $n \equiv 0 \text{ (mod 3)}$. A simple-clique 2-tree can be constructed on $n$ vertices as follows:
\begin{enumerate}
\item Begin by forming a $K_3$. This triangle is included in our triangle decomposition. 
\item Add a new vertex to the graph by connecting it to any allowable edge, that is any edge that has not already been used to add a vertex to the graph. This forms a new triangle with two edges that are not included in the triangle decomposition. 
\item Add two new vertices to the graph by connecting one to each of the edges constructed in step 2. This forms two new triangles, both of which are be included in the triangle decomposition. 
\item Repeat steps 2 and 3 until the desired number of vertices has been achieved.
\end{enumerate}

This construction method works as at each iteration, all newly added edges are included in the triangle decomposition. See Figure 6 for an example of order 9. \\
\end{proof}

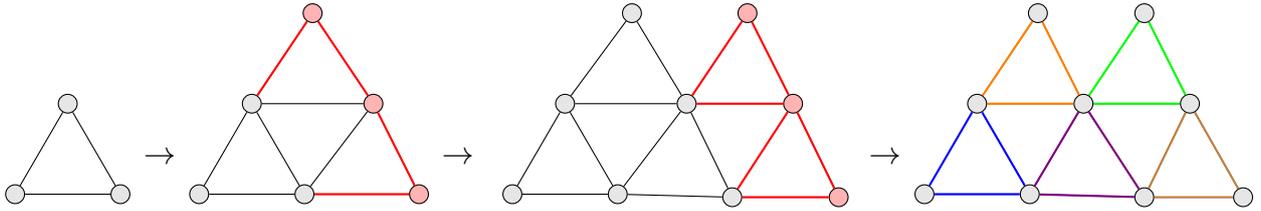
\begin{figure}[h]
\centering
\hspace{-12cm}
\begin{tikzpicture}\begin{scope}
[scale=.4,auto=right,every node/.style={circle,fill=gray!20},nodes={circle,draw,minimum size=0pt,inner sep=2.5pt}]
\foreach \lab/\ang in {a/90, b/210, c/330}
{\node(\lab)at(\ang:2){};}
\draw(a) -- (b) -- (c) -- (a);
\end{scope}
\hspace{1cm}
$\rightarrow$
\hspace{1cm}
\begin{scope}
[scale=.4,auto=right,every node/.style={circle,fill=gray!20},nodes={circle,draw,minimum size=0pt,inner sep=2.5pt}]
\foreach \lab/\ang in {a/90, b/210, c/330}
{\node(\lab)at(\ang:2){};}
\draw(a) -- (b) -- (c) -- (a);
\node[fill = red!30](d)at(4,2){};
\draw(a) -- (d) -- (c);
\node[fill = red!30](e)at(2,5){};
\node[fill = red!30](f)at(5.5,-1){};
\draw[red,thick](c)-- (f) -- (d);
\draw[red,thick](a)-- (e) -- (d);
\end{scope}
\hspace{2.5cm}
$\rightarrow$
\hspace{1.2cm}
\begin{scope}
[scale=.4,auto=right,every node/.style={circle,fill=gray!20},nodes={circle,draw,minimum size=0pt,inner sep=2.5pt}]
\foreach \lab/\ang in {a/90, b/210, c/330}
{\node(\lab)at(\ang:2){};}
\draw(a) -- (b) -- (c) -- (a);
\node[](d)at(4,2){};
\draw(a) -- (d) -- (c);
\node[](e)at(2.2,5){};
\node[](f)at(5.5,-1.1){};
\draw[](c)-- (f) -- (d);
\draw[](a)-- (e) -- (d);
\node[fill=red!30](g) at(6,5){};
\node[fill=red!30](h)at(9,-1.1){};
\node[fill=red!30](i)at(7.5,2){};
\draw[red,thick] (i)--(g)--(d)--(i)--(h)--(f)--(i);
\end{scope}
\hspace{4cm}
$\rightarrow$
\hspace{1cm}
\begin{scope}
[scale=.4,auto=right,every node/.style={circle,fill=gray!20},nodes={circle,draw,minimum size=0pt,inner sep=2.5pt}]
\foreach \lab/\ang in {a/90, b/210, c/330}
{\node(\lab)at(\ang:2){};}
\draw[blue,thick](a) -- (b) -- (c) -- (a);
\node[](d)at(3.5,2){};
\draw[](a) -- (d) -- (c);
\node[](e)at(2,5){};
\node[](f)at(5.5,-1.1){};
\draw[](c)-- (f) -- (d);
\draw[](a)-- (e) -- (d);
\node[](g) at(5.5,5){};
\node[](h)at(8.75,-1.1){};
\node[](i)at(7,2){};
\draw[] (i)--(g)--(d)--(i)--(h)--(f)--(i);
\draw[orange,thick](a) -- (e) -- (d) -- (a);
\draw[green,thick](d) -- (g) -- (i) -- (d);
\draw[violet,thick](d) -- (c) -- (f) -- (d);
\draw[brown,thick](i) -- (f) -- (h) -- (i);
\end{scope}
\end{tikzpicture}
\caption{A construction of an order 9 simple-clique 3-tree that is triangle decomposable, using the construction method described in Theorem 4 (V2).}
\end{figure}

This method provides alternate proofs of Theorem 5 and 6, beginning with the graphs in Figures 7 and 8. 

\begin{figure}[th!]
\centering
\begin{minipage}{.4\textwidth}
\centering
\begin{tikzpicture}
[scale=.7,auto=right,every node/.style={circle,fill=gray!20},nodes={circle,draw,minimum size=0pt,inner sep=2.5pt}]
\node(a)at(2,0){};
\node(b)at(0,-2){};
\node(c)at(-2,0){};
\node(d)at(0,2){};
\draw(d)--(a)--(b)--(c)--(d)--(b);
\draw[red, thick](b)to[bend right = 22](d);
  \end{tikzpicture}
  \captionof{figure}{An alternate proof for Theorem 5 begins with the above graph, where the red edge is a multi-edge. }
  \label{fig:test1}
  \end{minipage}
  \hspace{2cm}
  \begin{minipage}{.45\textwidth}
  \centering
  \begin{tikzpicture}
[scale=.7,auto=right,every node/.style={circle,fill=gray!20},nodes={circle,draw,minimum size=0pt,inner sep=2.5pt}]
\node(a)at(0,0){};
\node(b)at(0,3){};
\node(c)at(-3,0){};
\node(d)at(-1.5,-2.25){};
\node(e)at(2.25,1.5){};
\draw(c)--(a)--(b)--(c)--(d)--(a)--(e)--(b);
\draw[red, thick](c)to[bend right = 22](a);
\draw[red, thick](b)to[bend left = 22](a);
\end{tikzpicture}
\captionof{figure}{An alternate proof for Theorem 6 begins with the above graph.  }
\label{fig:test2}
\end{minipage}
  \end{figure}

\subsection{Simple-clique 3-trees}

After looking at simple-clique 2-trees, the natural progression is simple-clique 3-trees, as they both fit into the larger class of simple-clique $k$-trees.  The following lemma classifies all planar 3-trees. 
\begin{lemma} [Lemma 24 in \cite{sc3}] A \emph{3}-tree is maximal planar if and only if it is a simple-clique \emph{3}-tree. 
\end{lemma}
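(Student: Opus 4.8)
The plan is to prove both directions of the biconditional in Lemma~12, namely that a $3$-tree is maximal planar if and only if it is a simple-clique $3$-tree. Since this is cited as Lemma~24 from \cite{sc3}, I would reconstruct the natural inductive argument that underlies such a characterization. The key structural fact to exploit is that both classes are built recursively: a $3$-tree grows by attaching each new vertex to an existing triangle ($K_3$), while a maximal planar graph on $n\geq 4$ vertices is a triangulation of the plane in which every face is bounded by a triangle. My overall strategy is induction on the order $n$, tracking how a single vertex-addition step interacts with planarity and with the simple-clique restriction (that each triangle is used at most once as an attachment site).

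For the reverse direction (simple-clique $3$-tree $\Rightarrow$ maximal planar), I would first argue that every $3$-tree is planar by maintaining a plane embedding through the construction: when a new vertex $v$ is attached to a triangle $T=xyz$, the simple-clique condition guarantees $T$ still bounds a face (it has not been subdivided before), so $v$ can be placed inside that face and joined to $x,y,z$, subdividing the triangular face into three triangular faces. By induction every face remains a triangle, so the graph is a triangulation, i.e. maximal planar. The simple-clique hypothesis is exactly what keeps an available triangular face present at each step, so I would emphasize that it is this ``each $K_3$ used at most once'' condition that prevents the attachment from occurring across a non-facial triangle and thereby destroying planarity (an attachment to an already-subdivided triangle is what creates a $K_5$ or $K_{3,3}$ subdivision).

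For the forward direction (maximal planar $3$-tree $\Rightarrow$ simple-clique), I would use the classical fact that every maximal planar graph on $n\geq 4$ vertices has a vertex of degree at most $5$, and more specifically I would look for a degree-$3$ vertex $v$ whose neighborhood forms a triangle that can be the last-added vertex in a simple-clique construction. The main technical point is to show that in a maximal planar $3$-tree one can always peel off a degree-$3$ vertex so that deleting it yields a smaller maximal planar $3$-tree, and then invoke the induction hypothesis; I would also need to verify that reinserting $v$ respects the simple-clique restriction, i.e.\ that the triangle $v$ was attached to had not previously served as an attachment site. The hard part will be this forward direction: showing that the recursive $3$-tree decomposition of a maximal planar graph can always be organized so that no triangle is reused, which amounts to ruling out the ``fan'' configurations where a single triangle is repeatedly subdivided. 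I expect the cleanest route is to show that a $3$-tree in which some triangle is used twice must contain a $K_4$-minor or $K_{2,3}$-configuration incompatible with being a triangulation, thereby forcing the simple-clique property; alternatively one reduces via a degree-$3$ vertex and checks the inductive step preserves both the $3$-tree structure and maximality of the planar embedding.
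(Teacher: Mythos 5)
The paper itself does not prove this lemma; it is quoted directly from Markenzon, Justel and Paciornik \cite{sc3}, so your attempt can only be judged on its own merits. Your reverse direction (simple-clique $\Rightarrow$ maximal planar) is essentially sound: in the inductively maintained embedding every triangle of a $3$-tree is a face at the moment it is created, and it only stops bounding a face when it is used as an attachment site, so the simple-clique condition does guarantee a free face at each step; together with the fact that any $3$-tree has exactly $3n-6$ edges, planarity already forces maximality. One caveat: your parenthetical claim that attaching to an already-used triangle ``creates a $K_5$ or $K_{3,3}$ subdivision'' is false --- attach two vertices to the same starting triangle and you obtain $K_5 - e$, which is planar and is in fact a simple-clique $3$-tree under a different ordering. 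Since the simple-clique property is an \emph{existential} condition on construction orderings, reasoning from a single bad ordering is never enough; this slip is harmless in the reverse direction but fatal in the forward one.

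The genuine gap is the forward direction, which you correctly flag as the hard part and then only gesture at. Two concrete problems. First, your induction hypothesis (``$G-v$ is a simple-clique $3$-tree'') is too weak for the reinsertion step: you need an SC ordering of $G-v$ in which the particular triangle $T = N(v)$ was never used as an attachment site, and the $K_5-e$ example shows that an arbitrary SC ordering may well have used it. To close this you must strengthen the statement being inducted on, e.g.\ that for every plane embedding and every \emph{internal} face triangle $T$ there is an SC ordering that never attaches to $T$ (obtainable by always peeling an internal degree-$3$ vertex, which exists because a $3$-tree on at least $5$ vertices has two nonadjacent simplicial vertices while the outer triangle's vertices are pairwise adjacent); nothing in your sketch supplies this. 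Second, your proposed obstruction is the wrong one: every $3$-tree contains $K_4$ outright, and maximal planar graphs are full of $K_{2,3}$ subdivisions --- $K_4$ and $K_{2,3}$ are the forbidden minors for \emph{outerplanarity}, relevant to the simple-clique $2$-tree discussion in Section 3.4, and you appear to have transplanted them. The obstruction that actually works is $K_{3,3}$: a triangle with three common neighbours gives a $K_{3,3}$ subgraph, so planarity forces every triangle of the $3$-tree to lie in at most two $K_4$'s. But deducing from that static condition the existence of an ordering in which no triangle is reused is precisely the content of Lemma 24 in \cite{sc3}; without proving that implication your argument is incomplete.
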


From Corollary 2, we know that maximal planar graphs are triangle decomposable if and if they are Eulerian. Notice that by construction a simple-clique 3-tree always has at least 2 odd degree vertices and so will always require at least three multi-edges between already adjacent vertices to form a triangle decomposable multigraph. The following theorem shows that for all $n \geq 4$, there is always a simple-clique 3-tree of order $n$ that requires the addition of exactly 3 multi-edges to be made triangle decomposable. 

\begin{theorem}
For any $n \geq 3$, there exists a simple-clique \emph{3}-tree $G$ such that $\eps_\Delta(G) = 3$. This is the minimum $\eps_\Delta(G)$ value for a simple clique \emph{3}-tree. 
\end{theorem}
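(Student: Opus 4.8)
The plan is to prove the statement in two parts: a lower bound showing that \emph{every} simple-clique 3-tree needs at least three multi-edges, and a construction realizing exactly three for every order $n \geq 4$ (the smallest simple-clique 3-tree being $K_4$).

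For the lower bound I would argue purely by counting. Since a simple-clique 3-tree $G$ is maximal planar (Lemma 12), it has size $|E(G)| = 3n - 6 = 3(n-2) \equiv 0 \text{ (mod 3)}$. If $H$ is obtained from $G$ by adding $m$ multi-edges and $H$ is triangle decomposable, then $|E(H)| = 3n - 6 + m$ must be divisible by $3$, forcing $3 \mid m$. On the other hand, the last vertex inserted during the construction of $G$ has degree exactly $3$, so $G$ has an odd-degree vertex and, by the handshake lemma, at least two; hence $G$ is not Eulerian and therefore not triangle decomposable, so $m \geq 1$. Combining $3 \mid m$ with $m \geq 1$ gives $m \geq 3$, that is, $\eps_\Delta(G) \geq 3$ for every simple-clique 3-tree, which is the claimed minimum.

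For the upper bound I would give an explicit inductive construction producing, for each $n \geq 4$, a simple-clique 3-tree $G_n$ (a stacked triangulation) together with a set $M$ of exactly three multi-edges and a triangle decomposition of $G_n + M$. The base case is $K_4$ on $\{a,b,c,d\}$: doubling the three edges at $a$ yields the decomposition $\{abc, abd, acd\}$, each triangle using one doubled $a$-edge, so $\eps_\Delta(K_4) = 3$. The invariant I would maintain is that $G_n$ has exactly four odd-degree vertices, one of which --- say $a$ --- is adjacent to the other three, and that $M$ consists of the three doubled edges of the claw centred at $a$; doubling this star is precisely a size-three $T$-join flipping the parities of exactly the four odd vertices, so $G_n + M$ is Eulerian. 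The inductive step inserts a new vertex into a carefully chosen triangular face of $G_n$; a direct parity computation shows the face can always be picked so that the resulting $G_{n+1}$ again has four odd vertices in the required claw configuration (this is illustrated by the passage $K_4 \to$ bipyramid $\to n=6$, where the claw centre migrates but always exists).

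The main obstacle is the second half of the inductive step: verifying that the triangle decomposition actually \emph{extends}, not merely that the parities work out. Inserting a simplicial vertex destroys the three decomposition triangles meeting its image face and introduces three new star edges that cannot be covered locally, so the decomposition must be re-routed around the newly doubled star. I would handle this by pairing the doubled $a$-edges with the triangular faces through $a$ and absorbing the remaining faces into triangles of $G_{n+1}$, exactly as in the verified cases $n=5,6$; the crux is to phrase this re-routing as an operation that provably preserves a valid decomposition at every step. A cleaner alternative worth attempting is to exhibit a self-contained triangle-decomposable gadget (a strip of tetrahedra) glued along a face whose three edges remain available, which would let the three multi-edges stay confined to a fixed $K_4$ core and remove the need to track a moving claw.
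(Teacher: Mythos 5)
Your lower bound is correct and is exactly the paper's argument: $|E(G)|=3n-6$ forces $3\mid\eps_\Delta(G)$, and the degree-$3$ vertex left by the final stacking step shows $G$ itself is not triangle decomposable, so $\eps_\Delta(G)\geq 3$. The gap is in your upper bound, and it is twofold. First, your inductive invariant is false as stated. In $K_4$ \emph{all four} vertices are odd, so whatever face you insert the fifth vertex into contains three odd vertices; the insertion flips all three to even and adds one new odd vertex, so the $5$-vertex simple-clique $3$-tree has exactly \emph{two} odd vertices (degrees $3,4,4,4,3$), and moreover they are non-adjacent. A doubled claw flips four parities and therefore cannot make this graph Eulerian; you need a three-edge path join between the two odd vertices. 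The parity bookkeeping must alternate between a path-shaped multi-edge set (two odd vertices) and a claw-shaped one (four odd vertices), which is precisely how the paper handles it: its multi-edges are $v_1v_2,\ v_2v_3,\ v_3a_i$ (a path) when $i$ is even and $v_1v_2,\ v_2v_3,\ v_2a_i$ (a claw) when $i$ is odd. Second, even with a corrected invariant, you explicitly leave the decomposition-extension step as a plan rather than a proof, and that step is the actual content of the theorem: making the multigraph Eulerian does not by itself give a decomposition, since Corollary 2 is stated for simple maximal planar graphs, not for the multigraphs you create by doubling edges.

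It is worth noting that the ``cleaner alternative'' you float at the end is essentially the paper's construction: all new vertices $a_1,a_2,\dots,a_i$ are stacked over the single edge $v_2v_3$, producing a strip of tetrahedra whose decomposition can be written in closed form (the ladder triangles $a_{2j+1}a_{2j}v_2$ and $a_{2j}a_{2j-1}v_3$, together with $v_1v_2v_4$, $v_1v_2v_3$, $v_2v_3a_i$, $a_1vv_2$ and $vv_4v_3$), so no inductive re-routing is needed. However, the ``three multi-edges confined to a fixed $K_4$ core'' version cannot work as stated: the last-stacked vertex $a_i$ always has odd degree $3$, so some multi-edge must be incident to it; in the paper the third doubled edge accordingly migrates with $i$ ($v_3a_i$ or $v_2a_i$). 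If you pursue your route, adopt the alternating path/claw multi-edge sets and verify the explicit closed-form decomposition rather than tracking a moving claw through an induction.
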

\begin{proof}
The result is obvious if $n =3$. First, one can see easily that a simple-clique 3-tree cannot be Eulerian. This is because the final vertex added to the graph has degree 3, and so there must always be at least two odd degree vertices. Further, notice that the size of a simple clique 3-tree is $3n - 6$. Therefore, the minimum possible $\eps_\Delta$ value of a simple-clique 3-tree is 3. We will show that such a graph with order $n \geq 4$ exists for all $n$. \\

The following construction will produce a simple 3-clique tree $G$ such that $\eps_\Delta(G)$ = 3. See Figure 9. 
\begin{enumerate}
\item Consider an embedding of $K_4$ in the plance. Label the vertices $v_1,v_2,v_3,v_4$ such that $v_1$ is not on the outer face of the graph. Add an additional vertex $v$ on the outside of the graph so that it is adjacent to $v_2,v_3,v_4$. Set $i = 1$. 
\item Add a vertex within the face formed by $v, v_2, v_3$ and label it as $a_i$. 
\item Add a vertex within the face formed by $a_i, v_2, v_3$ and label it as $a_{i+1}$.
\item Repeat step 3 until the graph has order $n$, increasing $i$ by 1 each interation.
\item If $i$ is even (graph has odd order), add multi-edges parallel to $v_1v_2$, $v_2v_3$, $v_3a_i$. If $i$ is odd (graph has even order), add multi-edges parallel to $v_1v_2$, $v_2v_3$, $v_2a_i$. 
\end{enumerate}

This construction results in a triangle decomposable multigraph. \\

 If $i$ is even, we include all the triangles where one of the edges has multiplicity 2. These triangles are $v_1v_2v_4$, $v_1v_2v_3$, $v_2v_3a_i$, and $v_3a_ia_{i-1}$. Then, we include the triangles $a_{2j + 1}a_{2j}v_2$ and $a_{2j}a_{2j-1}v_3$ for all $1 \leq j \leq \frac{i-2}{2}$. Finally we include the triangles $a_1vv_2$ and $vv_4v_3$. \\
 
  If $i$ is odd, we include all the triangles where one of the edges has multiplicity 2. These triangles are $v_1v_2v_4$, $v_1v_2v_3$, $v_2v_3a_i$, and $v_2a_ia_{i-1}$. Then, we include the triangles $a_{2j}a_{2j - 1}v_3$ and $a_{2j-1}a_{2j-2}v_3$ for all $2 \leq j \leq \frac{i-1}{2}$. Finally we include the triangles $a_2a_1v_3$, $a_1v_2v$, and $vv_4v_3$. 

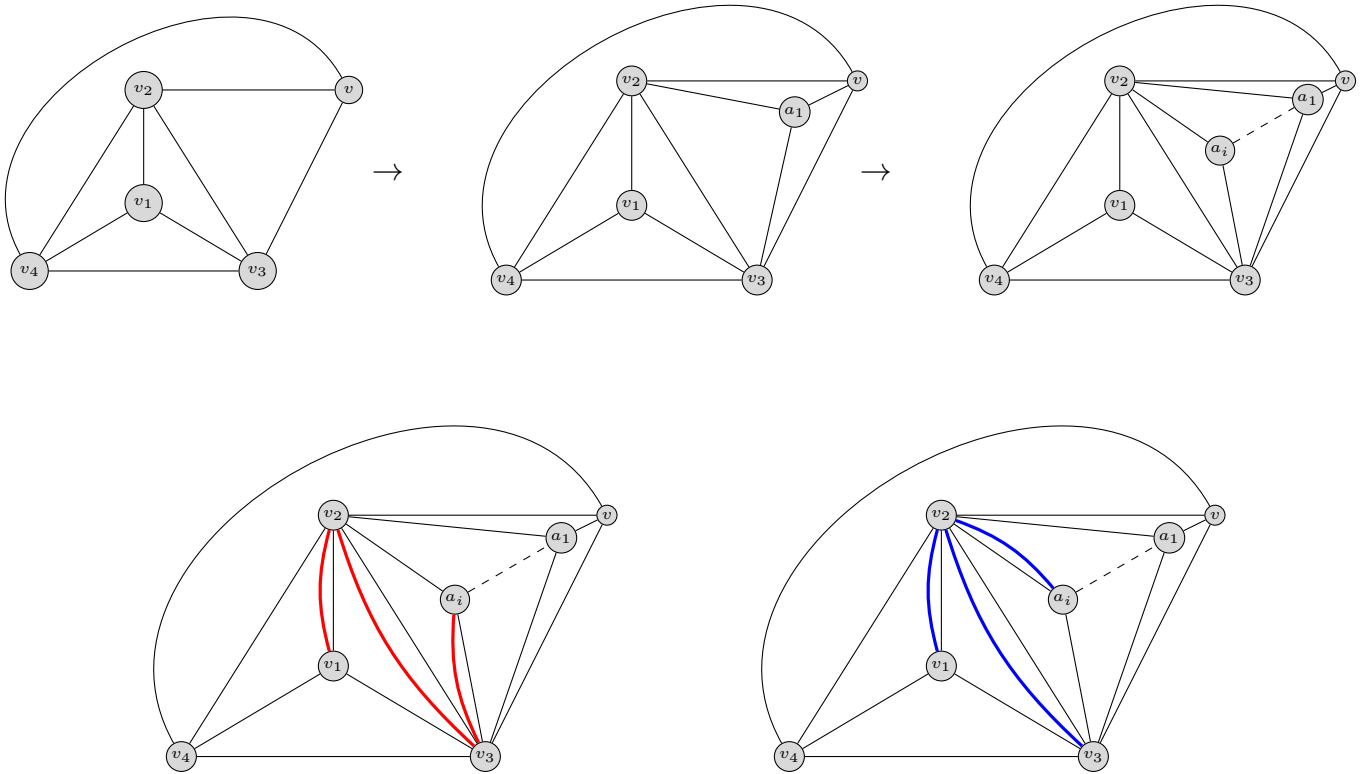
\begin{figure}[th!]
\centering
\begin{tikzpicture}
\hspace{-7cm}
\begin{scope}
[scale=.3,auto=right,every node/.style={circle,fill=gray!30, inner sep = 2pt,draw}]
\node(a) at(0,-1){\tiny$v_1$};
\node(b) at(0,4){\tiny$v_2$};
\node(c) at(5,-4){\tiny$v_3$};
\node(d) at(-5,-4){\tiny$v_4$};
\draw(a)--(b)--(c)--(a)--(d)--(c);
\draw(b)--(d);
\node(e) at(9,4){\tiny$v$};
\draw(b)--(e)--(c);
\draw(d) to[bend left = 90,looseness=1.25] (e);
\end{scope}
\hspace{3cm}
$\rightarrow$
\hspace{3cm}
\begin{scope}
[scale=.33,auto=right,every node/.style={circle,fill=gray!30, inner sep = 1pt,draw}]
\node(a) at(0,-1){\tiny$v_1$};
\node(b) at(0,4){\tiny$v_2$};
\node(c) at(5,-4){\tiny$v_3$};
\node(d) at(-5,-4){\tiny$v_4$};
\draw(a)--(b)--(c)--(a)--(d)--(c);
\draw(b)--(d);
\node(e) at(9,4){\tiny$v$};
\draw(b)--(e)--(c);
\draw(d) to[bend left = 90,looseness=1.25] (e);
\node(g)at(6.5,2.75){\tiny$a_1$};
\draw(b)--(g)--(c);
\draw(g)--(e);
\end{scope}
\hspace{3cm}
$\rightarrow$
\hspace{3cm}
\begin{scope}
[scale=.33,auto=right,every node/.style={circle,fill=gray!30, inner sep = 1pt,draw}]
\node(a) at(0,-1){\tiny$v_1$};
\node(b) at(0,4){\tiny$v_2$};
\node(c) at(5,-4){\tiny$v_3$};
\node(d) at(-5,-4){\tiny$v_4$};
\draw(a)--(b)--(c)--(a)--(d)--(c);
\draw(b)--(d);
\node(e) at(9,4){\tiny$v$};
\draw(b)--(e)--(c);
\draw(d) to[bend left = 90,looseness=1.25] (e);
\node(f)at(4,1.2){\tiny$a_i$};
\node(g)at(7.5,3.25){\tiny$a_1$};
\draw(b)--(f)--(c);
\draw[dashed](f)--(g);
\draw(b)--(g)--(c);
\draw(g)--(e);
\end{scope}
\end{tikzpicture}\\
\begin{tikzpicture}
\hspace{-7.5cm}
\hspace{3cm}
\begin{scope}
[scale=.4,auto=right,every node/.style={circle,fill=gray!30, inner sep = 1pt,draw}]
\node(a) at(0,-1){\tiny$v_1$};
\node(b) at(0,4){\tiny$v_2$};
\node(c) at(5,-4){\tiny$v_3$};
\node(d) at(-5,-4){\tiny$v_4$};
\draw(a)--(b)--(c)--(a)--(d)--(c);
\draw(b)--(d);
\node(e) at(9,4){\tiny$v$};
\draw(b)--(e)--(c);
\draw(d) to[bend left = 90,looseness=1.25] (e);
\node(f)at(4,1.2){\tiny$a_i$};
\node(g)at(7.5,3.25){\tiny$a_1$};
\draw(b)--(f)--(c);
\draw[dashed](f)--(g);
\draw(b)--(g)--(c);
\draw(g)--(e);
\draw[red,very thick](a) to[bend left =15] (b);
\draw[red,very thick](c) to[bend left =15] (b);
\draw[red,very thick](c) to[bend left =15] (f);
\end{scope}
\hspace{5cm}
\hspace{3cm}
\begin{scope}
[scale=.4,auto=right,every node/.style={circle,fill=gray!30, inner sep = 1pt,draw}]
\node(a) at(0,-1){\tiny$v_1$};
\node(b) at(0,4){\tiny$v_2$};
\node(c) at(5,-4){\tiny$v_3$};
\node(d) at(-5,-4){\tiny$v_4$};
\draw(a)--(b)--(c)--(a)--(d)--(c);
\draw(b)--(d);
\node(e) at(9,4){\tiny$v$};
\draw(b)--(e)--(c);
\draw(d) to[bend left = 90,looseness=1.25] (e);
\node(f)at(4,1.2){\tiny$a_i$};
\node(g)at(7.5,3.25){\tiny$a_1$};
\draw(b)--(f)--(c);
\draw[dashed](f)--(g);
\draw(b)--(g)--(c);
\draw(g)--(e);
\draw[blue,very thick](a) to[bend left =15] (b);
\draw[blue,very thick](c) to[bend left =15] (b);
\draw[blue,very thick](b) to[bend left =15] (f);
\end{scope}
\end{tikzpicture}
\caption{Construction of a simple-clique 3-tree with the minimum number of multi-edges required to form a triangle decomposable graph. For odd order, multi-edges are shown in red; for even order, multi-edges are shown in blue.}
\end{figure}

\end{proof}

\section{Toroidal Graphs}

After exploring multiple classes of planar graphs, it is natural to look at triangulations of the torus. There is a larger number of graphs embeddable on the torus than in the plane; for example $K_5$ is embeddable in the torus while it is famously non-planar. See Figure 10. \\

\begin{figure}[h]
\centering
\includegraphics[scale=.2]{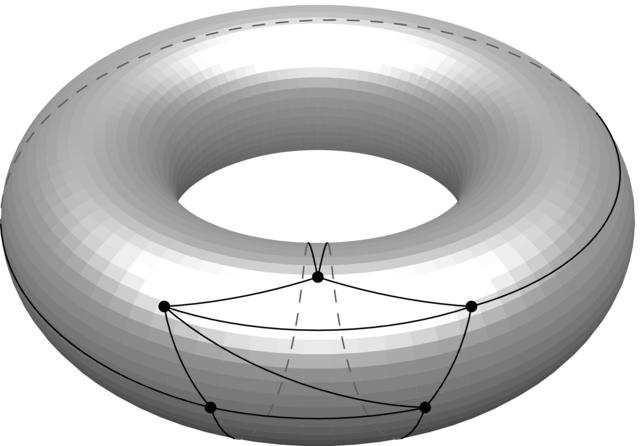}
\hspace{3cm}
\begin{tikzpicture}
[scale = 1.3]
\draw[black, thick] (0,1) -- (1,0.25) -- (0.6,-1) -- (-.6,-1) -- (-1,.25) -- (0,1);
\draw[black, thick] (1,0.25) -- (-1,0.25) -- (0.6,-1);
 \draw[black, very thick, postaction={decoration={markings,mark=at position 0.5 with {\arrow{>>}}},decorate}] (1.5,1.5) -- (-1.5,1.5); 
  \draw[black, very thick, postaction={decoration={markings,mark=at position 0.5 with {\arrow{<<}}},decorate}] (-1.5,-1.5) -- (1.5,-1.5); 
   \draw[black, very thick, postaction={decoration={markings,mark=at position 0.5 with {\arrow{>}}},decorate}] (-1.5,-1.5) -- (-1.5,1.5); 
  \draw[black, very thick, postaction={decoration={markings,mark=at position 0.5 with {\arrow{<}}},decorate}] (1.5,1.5) -- (1.5,-1.5); 
 \draw[red, thick](0,1) -- (.3, 1.5);
  \draw[red, thick](.6,-1) -- (.3, -1.5);
   \draw[blue, thick](0,1) -- (-.3, 1.5);
  \draw[blue, thick](-.6,-1) -- (-.3, -1.5);
  \draw[green, thick] (1,.25) -- (1.5,-.2);
    \draw[green, thick] (-.6,-1) -- (-1.5,-.2);
\def\r{.1} \draw[fill=red!40]  (0,1) circle(\r) (1,0.25) circle(\r) (-1,0.25) circle(\r) (0.6,-1) circle(\r) (-0.6,-1) circle(\r); \end{tikzpicture}
\caption{Two different representations of $K_5$ embedded on the torus (left is a rendering by Anthony Phan, see \cite{phan}).}
\end{figure}

Euler's characterization for polyhedra can be extended to planar graphs to give a relationship between the number of edges, faces, and vertices of a given graph. It states that for any graph embedded in the plane $V - E + F = 2,$ where $V$ is the order of the graph, $E$ is the size, and $F$ is the number of faces. This formula was generalized by Simon Lhuilier for polyhedra with holes and thus could be used for graphs embedded on torus-like polyhedra. The generalization states that $V - E + F = 2 - 2G,$ where $V$, $E$, and $F$ are as in Euler's characterization and $G$ is the `genus' of the polyhedra, which is the number of holes. Therefore, for the torus we know that $V - E + F = 0.$ \\

It is interesting to note that embedding a maximal planar graph in the plane results in a a triangulation of the plane but this does not extend to the torus. There exist graphs of maximal size on the torus that have faces which are not triangles. See Figure 11. \\
\begin{figure}[h]
\centering
\begin{tikzpicture}
[scale=1.3]

\draw[very thick] (2,1) -- (0,1) -- (-2,1) -- (-2,0) -- (-2,-1) -- (0,-1) -- (2, -1) -- (2,0) -- (2,1);
\draw [very thick](2,0) -- (0,-1) -- (-2,0) -- (0,1) -- (2,0);
\draw[very thick, red] (0,1) -- (0,2);
\draw[very thick, red] (0,-1) -- (0,-2);
\draw[very thick, green] (2,0) -- (3,0);
\draw[very thick, green] (-2,0) -- (-3,0);
\draw[very thick, blue] (-2,-1) -- (-3,-1);
\draw[very thick, blue] (2,-1) -- (3,-1);
\draw[very thick, violet] (-2,1) -- (-3,1);
\draw[very thick, violet] (2,1) -- (3,1);
\draw[very thick, cyan] (-2,1) -- (-2,2);
\draw[very thick, cyan] (-2,-1) -- (-2,-2);
\draw[very thick, pink] (2,1) -- (2,2);
\draw[very thick, pink] (2,-1) -- (2,-2);
\draw[very thick, orange] (-2,-1) -- (-1,-2);
\draw[very thick, orange] (0,1) -- (-1,2);
\draw[very thick, gray] (2,-1) -- (1,-2);
\draw[very thick, gray] (0,1) -- (1,2);
\draw[very thick, brown] (-2,1) -- (-3,2);
\draw[very thick, brown] (2,-1) -- (3,-2);
\draw[very thick, purple] (-2,0) -- (-3,0.5);
\draw[very thick, purple] (2,1) -- (3,0.5);
\draw[very thick, olive] (-2,0) -- (-3,-0.5);
\draw[very thick, olive] (2,-1) -- (3,-0.5);
\def\r{.075} \draw[fill=gray!40]   (2,1) circle(\r) (0,1) circle(\r) (-2,1) circle(\r) (2,0) circle(\r) (-2,0) circle(\r) (2,-1) circle(\r) (-2,-1) circle(\r) (0,-1) circle(\r);
 \draw[black,very thick] (3,2) rectangle (-3,-2); 
  \draw[black, very thick, postaction={decoration={markings,mark=at position 0.6 with {\arrow{>>}}},decorate}] (3,2) -- (-3,2); 
  \draw[black, very thick, postaction={decoration={markings,mark=at position 0.4 with {\arrow{<<}}},decorate}] (-3,-2) -- (3,-2); 
   \draw[black, very thick, postaction={decoration={markings,mark=at position 0.4 with {\arrow{<}}},decorate}] (-3,-2) -- (-3,2); 
  \draw[black, very thick, postaction={decoration={markings,mark=at position 0.6 with {\arrow{>}}},decorate}] (3,2) -- (3,-2); 
\end{tikzpicture}
\caption{A maximal toroidal graph that is not a triangulation of the torus \cite{white}. }
\end{figure}
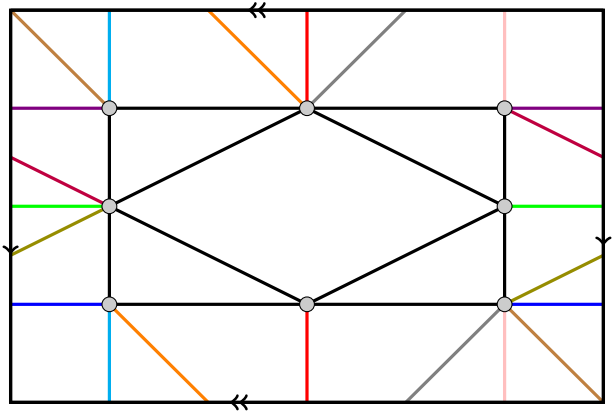

We define a \emph{maximal single-face toroidal graph} as a graph that is embedded on the torus, all vertices lie on one face that is a cycle, all other edges lie outside the cycle such that the addition of any edge between not already adjacent vertices would force edges to cross, and all edges lie on at least one triangle. We use $\mathcal{SF}_n$ to represent the set of all single-faced toroidal graphs with order $n$. \\
\begin{lemma}
A graph $G \in \mathcal{SF}_n$ has size $2n + 3$. 
\end{lemma}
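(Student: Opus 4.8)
The plan is to apply the Euler formula for the torus, $V - E + F = 0$, together with a double count of edge--face incidences. Here $V = n$, and I write $F$ for the number of faces of the given embedding. By hypothesis $G$ is embedded on the torus with one distinguished face bounded by a cycle through all $n$ vertices; call this the \emph{cycle face}. The first step is simply to record that this distinguished face is an $n$-gon, so its boundary walk contributes $n$ edges.

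The crucial structural claim is that \emph{every face other than the cycle face is a triangle}, and this is where I expect to spend the most effort. Since $G$ is a maximal single-face toroidal graph, no edge between two non-adjacent vertices can be added outside the cycle without forcing a crossing, and every edge lies on at least one triangle. I would argue that if some face other than the cycle face had a boundary walk of length at least $4$, then two suitable vertices on its boundary could be joined by a new non-crossing chord drawn inside that face, contradicting maximality. This is the main obstacle, for two reasons: one must rule out the degenerate possibility that the desired chord already occurs as an edge of $G$ elsewhere (so that it cannot be added as a simple edge), for which the ``every edge lies on a triangle'' hypothesis must be invoked; and one must confirm that the embedding is cellular, since $V - E + F = 0$ on the torus requires every face to be a disk.

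Granting the claim, let $t = F - 1$ denote the number of triangular faces. Each edge borders exactly two faces, so summing boundary lengths over all faces gives $2E = n + 3t = n + 3(F-1)$, whence $F = \tfrac{2E - n}{3} + 1$. Substituting into $n - E + F = 0$ yields $n - E + \tfrac{2E - n}{3} + 1 = 0$; clearing denominators gives $3n - 3E + 2E - n + 3 = 0$, that is $2n - E + 3 = 0$, so $E = 2n + 3$, as required. The Euler-formula substitution and the incidence count are routine; the entire weight of the argument rests on establishing the triangulation claim of the second paragraph.
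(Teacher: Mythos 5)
Your Euler-formula computation is exactly the paper's: the cycle face contributes $n$ to the sum of face boundary lengths, each of the remaining $F-1$ faces contributes $3$, so $2E = n + 3(F-1)$, and substituting into $V - E + F = 0$ for the torus gives $E = 2n+3$. The difference lies in where the claim \emph{every non-cycle face is a triangle} comes from. The paper treats it as definitional (Section 4 introduces these graphs as triangulations of the torus with all vertices on a single face, and the proof of the lemma simply reads the triangle property off the definition of $\mathcal{SF}_n$), whereas you propose to derive it from maximality --- and that derivation, which you yourself flag as the crux and leave as a plan, has a genuine gap that cannot be closed along the route you sketch. On the torus, a face of boundary length at least $4$ need not admit an addable chord: every pair of vertices on its boundary may already be adjacent via edges drawn elsewhere in the embedding, in which case the maximality clause of the definition (which only forbids adding edges ``between not already adjacent vertices'') is not violated. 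This is not a hypothetical worry: the paper's own Figure 11 exhibits the Harary--Kainen--Schwenk--White graph, a maximal toroidal graph that is not a triangulation; moreover, in that example every edge does lie on a triangle, so the ``every edge lies on at least one triangle'' hypothesis you hoped to invoke against the degenerate case fails to rule it out. Maximality plus the triangle-per-edge condition simply do not force triangular faces on the torus.

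Consequently, any honest proof of the triangulation claim would have to exploit the specific structure of $\mathcal{SF}_n$ (all vertices on the one cycle face, all other edges outside it), or else the property must be absorbed into the definition of the class, which is what the paper in effect does. Once the claim is granted, your incidence count and Euler substitution are correct and match the paper's line for line. Your observation that $V - E + F = 0$ requires the embedding to be cellular is likewise a legitimate hypothesis that the paper leaves implicit, and it is best handled the same way: as part of what it means to be a maximal single-face toroidal graph, rather than as something to be proved within this lemma.
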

\begin{proof}
Let $G \in \mathcal{SF}_n$. Then, $G$ has $n$ edges on one face that forms a cycle. All edges on the cycle lie on one triangle and no edges lie inside the triangle, from the definition of maximal single-face toroidal graphs. Therefore, $|E(G)| = \frac{n + 3(f-1)}{2}$. It follows from Euler's formula for toroidal graphs that $|E(G)| = 2n + 3$. 
\end{proof}

As the size of a graph in $\mathcal{SF}_n$ is $2n+3$, the minimum possible number of edges required to form a strongly $K_3$-divisible multigraph is the residue of $n \text{ (mod 3)}$. We found a construction of a graph in $\mathcal{SF}_8$ that requires two multi-edges, and so it follows that $\eps_\Delta(\mathcal{SF}_8) = 2$. See Figure 12. \\

Similarly, the number of multi-edges required of a graph in $\mathcal{SF}_7$ must be in the congruence class $1 \text{ (mod 3)}$. We found a construction of a graph that requires six multi-edges, and so $\eps_\Delta(\mathcal{SF}_7) \leq 4$. See Figure 13. \\

Finally, the number of multi-edges required of a graph in $\mathcal{SF}_9$ must be in the congruence class $0 \text{ (mod 3)}$. We found a construction of a graph that requires four multi-edges, and so $\eps_\Delta(\mathcal{SF}_9) \leq 6$. See Figure 14. \\

\begin{figure}[th!]
\centering
\begin{minipage}{.4\textwidth}
\centering
\begin{tikzpicture}
[scale=1.8]
\draw[black, thick](1,0)--(0.5,1)--(-.5,1) -- (-1,0)--(-1.5,-1)--(-.5,-1)--(.5,-1) -- (1,0) -- (0,0)--(.5,-1);
\draw[violet, thick](-1.5,-1) -- (-2,-1.3);
\draw[violet, thick](1.5,-1.3) -- (1.2,-1.5);
\draw[violet, thick](1.2,1.5) -- (0.5,1);
\draw[blue, thick](-1.5,-1) -- (-2, -1);
\draw[blue, thick](1.5,-1) -- (.5, -1);
\draw[cyan, thick](-1.5,-1)--(-1,-1.5);
\draw[cyan, thick](-1,1.5)--(-.5,1);
\draw[orange, thick](-1.5,-1) -- (-1.7,-1.5);
\draw[orange, thick] (-1.7,1.5) -- (-2,1);
\draw[orange, thick] (1.5,1) -- (1,0);
\draw[pink, double, very thick] (-.5,-1) -- (-.5,-1.5);
\draw[pink, double, very thick] (-.5,1.5) -- (-.5,1);
\draw[red, double, very thick](-.5,-1) -- (0,-1.5);
\draw[red, double, very thick](0,1.5) -- (.5,1);
\draw[yellow, thick](.5,1) -- (.5,1.5);
\draw[yellow, thick](.5,-1.5) -- (.5,-1);
\draw[brown, thick](.5,-1) -- (1.5,-.5);
\draw[brown, thick](-2,-.5) -- (-1,0);
\draw[green, thick](-1,0) -- (-2,0);
\draw[green, thick](1.5,0) -- (1,0);
\draw[purple, thick](-.5,1) -- (-2,.25);
\draw[purple, thick](1.5,.25) -- (1,0);
\def\r{.05} \draw[fill=gray!40]   (-1.5,-1) circle(\r) (.5,-1) circle(\r) (-1,0) circle(\r) (-0.5,1) circle(\r) (0.5,1) circle(\r) (-.5,-1) circle(\r) (0,0) circle(\r) (1,0) circle(\r); 
  \draw[black, very thick, postaction={decoration={markings,mark=at position 0.5 with {\arrow{>>}}},decorate}] (1.5,1.5) -- (-2,1.5); 
  \draw[black, very thick, postaction={decoration={markings,mark=at position 0.5 with {\arrow{<<}}},decorate}] (-2,-1.5) -- (1.5,-1.5); 
   \draw[black, very thick, postaction={decoration={markings,mark=at position 0.45 with {\arrow{>}}},decorate}] (-2,-1.5) -- (-2,1.5); 
  \draw[black, very thick, postaction={decoration={markings,mark=at position 0.55 with {\arrow{<}}},decorate}] (1.5,1.5) -- (1.5,-1.5); 
  \end{tikzpicture}
  \captionof{figure}{An example that shows $\eps_\Delta(\mathcal{SF}_8) = 2$. }
  \label{fig:test1}
  \end{minipage}
  \hspace{2cm}
  \begin{minipage}{.45\textwidth}
  \centering
  \begin{tikzpicture}
[scale=1.5]
\draw[black,thick] (0,1.3) -- (1,.7) -- (0,0) -- (0,1.3) -- (-1,.7) -- (-1,-.7) -- (0,-1.3) -- (1,-.7) -- (1,.7);
\draw[violet, very thick, double](0,1.3) -- (0,2);
\draw[violet, very thick, double](0,-2) -- (0,-1.3);
\draw[magenta, thick](0,-1.3) -- (-.4,-2);
\draw[magenta, thick](-.4,2) -- (-1,.7);
\draw[cyan, thick](0,-1.3) -- (-1,-2);
\draw[cyan, thick](-1,2) -- (-2,1.4);
\draw[cyan, thick](2,1.4) -- (1,.7);
\draw[orange,very thick, double] (0,1.3) -- (.5,2);
\draw[orange, very thick, double] (.5,-2) -- (1,-.7);
\draw[red,very thick, double] (1,-.7) -- (2,-.7);
\draw[red, very thick, double] (-2,-.7) -- (-1,-.7);
\draw[brown, thick] (1,.7) -- (2,.7);
\draw[brown, thick] (-1,.7) -- (-2,.7);
\draw[blue, very thick, double] (-1,.7) -- (-2,.1);
\draw[blue, very thick, double] (2,.1) -- (1,-.7);
\draw[green, thick](1,.7) -- (2,2);
\draw[green, thick](-1,-.7) -- (-2,-2);
\draw[olive, thick](-1,-.7) -- (-2,-1.3);
\draw[olive, thick](2,-1.3) -- (1.2,-2);
\draw[olive, thick](1.2,2) -- (0,1.3);
\def\r{.05} \draw[fill=gray!40]   (0,0) circle(\r) (0,1.3) circle(\r) (-1,.7) circle(\r) (1,.7) circle(\r) (-1,-.7) circle(\r) (1,-.7) circle(\r) (0,-1.3) circle(\r);
  \draw[black, very thick, postaction={decoration={markings,mark=at position 0.5 with {\arrow{>>}}},decorate}] (2,2) -- (-2,2); 
  \draw[black, very thick, postaction={decoration={markings,mark=at position 0.5 with {\arrow{<<}}},decorate}] (-2,-2) -- (2,-2); 
   \draw[black, very thick, postaction={decoration={markings,mark=at position 0.5 with {\arrow{>}}},decorate}] (-2,-2) -- (-2,2); 
  \draw[black, very thick, postaction={decoration={markings,mark=at position 0.5 with {\arrow{<}}},decorate}] (2,2) -- (2,-2); 
\end{tikzpicture}
\captionof{figure}{An example that shows $\eps_\Delta(\mathcal{SF}_7) \leq 4$. }
\label{fig:test2}
\end{minipage}
  \end{figure}
\begin{figure}[th!]
\centering
\begin{tikzpicture}
[scale=1.3]
\draw[black, thick](-2,1) -- (0,1) -- (2,1) -- (3,0) -- (2,-1) -- (0,-1) -- (-2,-1) -- (-3,0) --(-2,1);
\draw[black, thick,double](-1,0) -- (-3,0) --(-2,1);
\draw[black,thick](-2,1) -- (-1,0) -- (-2,-1);
\draw[black,thick](2,-1) -- (1,0) -- (3,0);
\draw[green,thick](2,-1) -- (3,-1);
\draw[green,thick](-2,-1) -- (-3,-1);
\draw[magenta,thick](2,1) -- (3,1);
\draw[magenta,thick](-2,1) -- (-3,1);
\draw[orange,thick](-2,1) -- (-3,2);
\draw[orange,thick](2,-1) -- (3,-2);
\draw[cyan,thick](2,1) -- (2,2);
\draw[cyan,thick](2,-1) -- (2,-2);
\draw[red,thick,double](-2,1) -- (-2,2);
\draw[red,thick,double](-2,-1) -- (-2,-2);
\draw[blue,thick,double](0,1) -- (0,2);
\draw[blue,thick,double](0,-1) -- (0,-2);
\draw[violet,thick,double](-2,-1) -- (-1,-2);
\draw[violet,thick,double](0,1) -- (-1,2);
\draw[olive,thick,double](0,-1) -- (1,-2);
\draw[olive,thick,double](2,1) -- (1,2);
\def\r{.075} \draw[fill=gray!40]   (1,0) circle(\r) (-1,0) circle(\r) (3,0) circle(\r) (-3,0) circle(\r)
 (-2,1) circle(\r)  (2,1) circle(\r)  (0,1) circle(\r)  (-2,-1) circle(\r)  (2,-1) circle(\r)  (0,-1) circle(\r);
  \draw[fill=white,white,very thick] (3,-1) rectangle (4,2); 
    \draw[fill=white,white,very thick] (-3,-1) rectangle (-4,2); 
 \draw[black,very thick] (3,2) rectangle (-3,-2); 
  \draw[black, very thick, postaction={decoration={markings,mark=at position 0.6 with {\arrow{>>}}},decorate}] (3,2) -- (-3,2); 
  \draw[black, very thick, postaction={decoration={markings,mark=at position 0.4 with {\arrow{<<}}},decorate}] (-3,-2) -- (3,-2); 
   \draw[black, very thick, postaction={decoration={markings,mark=at position 0.4 with {\arrow{<}}},decorate}] (-3,-2) -- (-3,2); 
  \draw[black, very thick, postaction={decoration={markings,mark=at position 0.6 with {\arrow{>}}},decorate}] (3,2) -- (3,-2); 
\end{tikzpicture}
\caption{An example that shows $\eps_\Delta(\mathcal{SF}_9) \leq 6$. }
\end{figure}
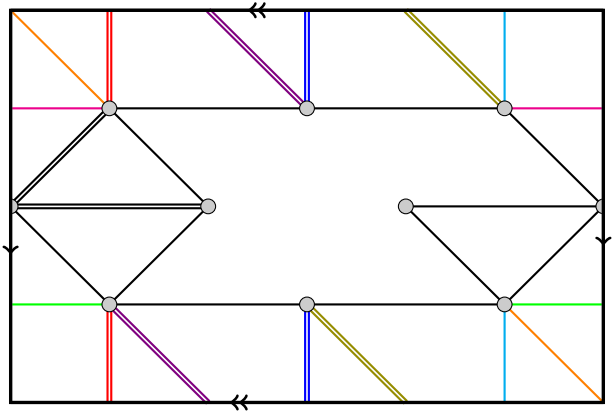

The problem of determining $\eps_\Delta(\mathcal{SF}_n)$ for general $n$ remains open. \\

\section{Concluding remarks and open problems}

In this paper, we have shown, using constructive methods, the existence of triangle decomposable multigraphs with a minimum number of multi-edges. Our focus has mainly been on graphs with the minimum number of multi-edges, but the existence of graphs with non-minimum $\eps_\Delta$ values remains open for most of the classes we explored. \\

Further, we did not prove the existence of single-face toroidal graphs with minimum number of multi-edges for $n \neq 8$. The existence of such graphs remains as an open problem. \\

Finally, the existence of triangle decomposable graphs with minimum number of multi-edges in other classes of graphs, especially non-planar classes, remains open. We explored simple-clique $k$-trees when $k = 2, 3$; the case for general $k$ remains open. To explore this problem, it may be necessary to understand the structure of adding multi-edges to $K_n$ to form a strongly $K_3$-divisible graph that is triangle decomposable. When $n \equiv 1, 3 \mod 6$, by existence of Steiner Triple systems, $K_n$ is triangle decomposable. For all other $n$, the problem was solved by Fort and Hedlund in 1958 \cite{fort}. They proved that adding the minimum possible number of edges required to form a strongly $K_3$-divisible graph to $K_n$ results in a triangle decomposable multigraph. \\

The cases for simple-clique $k$-trees may also require the step number to be considered, as with the addition of each new vertex there is a minimum number of odd vertices required. For example, a simple-clique 4-tree with five vertices has no odd degree vertices, a simple-clique 4-tree with six vertices contains exactly 4 odd vertices, and a simple-clique 4-tree with seven vertices contains at least two odd degree vertices. This gives a lower bound on the required number of multi-edges. \\ 

\textbf{Acknowledgements.} We would like to thank Gary MacGillivary, Peter Dukes, and Therese Biedl for their assistance. \\

We acknowledge the support of the Natural Sciences and Engineering Research Council of Canada (NSERC), PIN 253271. \\

Cette recherche a \'{e}t\'{e} financ\'{e}e par le Conseil de recherches en sciences naturelles et en g\'{e}nie du Canada (CRSNG), PIN 253271.

\begin{figure}[th!]
\begin{center}
\includegraphics[scale=.25]{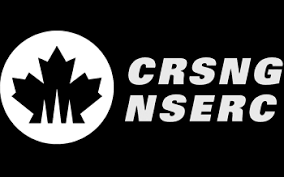}
\end{center}
\end{figure}

\newpage

\newpage

\section{Appendix}
\subsection{Additional proofs}
\textbf{Theorem 6.} \emph{ For $n\geq 3$, $n \equiv 1 \mod 3$ if and only if $\eps_{\Delta}(\mathcal{OP}_{n}) = 1$.}
\begin{proof}

Suppose first that $\eps_{\Delta}(\mathcal{OP}_{n}) = 1$. Then, there exists $G \in \mathcal{OP}_n$ such that $\eps_{\Delta}(G) = 1$. Let us fix such a $G$. Then $E(G) + 1 \equiv 0 \text{ (mod 3)}$. Therefore, $2n -3 + 1 \equiv 0 \text{ (mod 3)}$. This implies $n \equiv 1 \text{ (mod 3)}$, as desired.  \\

Conversely, suppose $n \equiv 1 \text{ (mod 3)}$. We will show there exists some graph $G \in \mathcal{OP}_n$ where $\eps_\Delta(G) = 1$. It is convenient to look at four cases, one for each congruence class mod 4. We proceed by induction on $n$. \\

First, consider the following graphs on $n$ vertices, where $n = 4,7, 10, 13$. The order of each of these graphs represents a different congruence class mod 4. 

$$n = 4: \\
\begin{tikzpicture}
[scale=.75,auto=right,every node/.style={circle,fill=gray!30, draw}]
\foreach \lab/\ang in {a/45,b/135,c/-135,d/-45}
{\node(\lab) at(\ang:2){};}
\draw[very thick,red]  (a) -- (b) -- (c) ;
\draw[very thick,blue] (c)-- (d) -- (a);
\draw[very thick,red] (a) to[in = 75, out= 195] (c);
\draw[very thick,blue] (a) to[in = 15, out= 255] (c);
\end{tikzpicture} \hspace{5mm}
n = 7: \\
\begin{tikzpicture}
[scale=.75,auto=right,every node/.style={circle,fill=gray!30, draw}]
\foreach \lab/\ang in {a/90,b/141.43,c/192.86,d/244.29,e/295.72,f/347.15,g/398.58}
{\node(\lab) at(\ang:2){};}
\draw[very thick,red](a)--(b)--(c)--(a);
\draw[very thick,blue](c)--(d)--(e)--(c);
\draw[very thick, green](e)--(f)--(g)--(e);
\draw[very thick, orange](g)--(a)--(e);
\draw[very thick, orange](g)to[in=95,out=240](e);
\end{tikzpicture}$$

$$n = 10: \\
\begin{tikzpicture}
[scale=.75,auto=right,every node/.style={circle,fill=gray!30, draw}]
\foreach \lab/\ang in {a/90,b/126,c/162,d/198,e/234,f/270,g/304,h/342,i/378,j/414,k/450}
{\node(\lab) at(\ang:2){};}
\draw[very thick,red](a)--(b)--(c)--(a);
\draw[very thick,blue](c)--(d)--(e)--(c);
\draw[very thick, green](e)--(f)--(g)--(e);
\draw[very thick, orange](g)--(h)--(i)--(g);
\draw[very thick, cyan](i)--(j)--(k)--(i);
\draw[very thick, pink](a) -- (e) -- (i);
\draw[very thick, pink](a) to[out = 300, in=180] (i);
\end{tikzpicture}\hspace{5mm}
n = 13: \\
\begin{tikzpicture}
[scale=1,auto=right,every node/.style={circle,fill=gray!30, draw}]
\foreach \lab/\ang in {a/0,b/27.7,c/55.4,d/83.1,e/110.8,f/138.5,g/166.2,h/193.9,i/221.6,j/249.3,k/277,l/304.7,m/332.4}
{\node(\lab) at(\ang:3){};}
\draw[very thick,red](a)--(b)--(c)--(a);
\draw[very thick,blue](c)--(d)--(e)--(c);
\draw[very thick, green](e)--(f)--(g)--(e);
\draw[very thick, orange](g)--(h)--(i)--(g);
\draw[very thick, cyan](i)--(j)--(k)--(i);
\draw[very thick, brown](k)--(l)--(m)--(k);
\draw[very thick, pink](a)--(m)--(e)--(a);
\draw[very thick, purple](m)--(g)--(k);
\draw[very thick, purple](k) to[out=90,in=180] (m);
\end{tikzpicture}$$ \\

Next, assume we there exists a maximal outerplanar graph $G$ of order $3(k-1) + 1$ where $\eps_\Delta(G) = 1$, for some $k \geq 2$. Recall that all maximal outerplanar graphs are Hamiltonian, so this assumption is equivalent to saying the inside of any cycle of length $3(k-1) + 1$ can be triangulated to form a graph $G$ where $\eps_\Delta(G) = 1$. \\

For the induction step, we will consider four cases, one for each congruence class mod 4. \\

\emph{Case 1:} Suppose $k = 4r$ for for $r \in \N$ where $r < k$. Then $n = 12r + 1$. \\

Begin by forming a cycle $C_{12r+1}$. Connect every other vertex in the graph beginning at $v_1$ until $v_{12r+1}$. Every edge other than $v_1v_{12r+1}$ lies in a triangle. Form a triangle between $v_{1}$, $v_{12r+9}$ and $v_{12r-3}$ by adding an edge between $v_1$ and $v_{12r-3}$ and an edge between $v_{12r+1}$ and $v_{12r-3}$. The inner-cycle of the graph has order $6r-3$. 

$$\begin{tikzpicture}
[scale=.6,auto=right,every node/.style={circle,fill=gray!30, inner sep = 1pt,draw}]
\foreach \labb/\lab /\ang in {1/$v_1$/90,2/$v_2$/75,3/$v_3$/60,4/$v_4$/45,5/$v_5$/30,6/$v_6$/15,
7/$v_{12r+1}$/105}
\node[](\labb)at(\ang:7){\tiny\lab};
\foreach \labb/\lab /\ang in {11/$v_{12r-3}$/165,12/$v_{12r-4}$/180,13/$v_{12r-5}$/195,14/$v_{12r-6}$/210,15/$v_{12r-7}$/225,10/$v_{12r-2}$/150,9/$v_{12r-1}$/135,8/$v_{12r}$/120}
\node[](\labb)at(\ang:7){\tiny\lab};
\foreach \labb/\lab /\ang in {16/$v_{12r-8}$/240,17/$v_{12r-9}$/255}
\node[](\labb)at(\ang:7){\tiny\lab};
\node[](19)at(0:7){\tiny$v_7$};
\draw(17)--(16)--(15)--(14)--(13)--(12)--(11)--(10)--(9)--(8)--(7)--(1)--(2)--(3)--(4)--(5)--(6)--(19);
\draw[red,very thick](1)to[bend right = 22](3);
\draw[red,very thick](3)to[bend right = 22](5);
\draw[red,very thick](5)to[bend right = 22](19);
\draw[red,very thick](17)to[bend right = 22](15);
\draw[red,very thick](15)to[bend right = 22](13);
\draw[red,very thick](13)to[bend right = 22](11);
\draw[red,very thick](11)to[bend right = 22](9);
\draw[red,very thick](9)to[bend right = 22](7);
\draw[red,very thick](7)to[bend left = 22](11);
\draw[red,very thick](11)to[bend right = 22](1);
\node[fill=none,draw=none,draw = none](a)at(-10:7){};
\node[fill=none,draw=none,draw = none](b)at(-10:6.5){};
\draw[dashed](19) -- (a);
\draw[dashed,red,very thick](19) -- (b);
\node[fill=none,draw=none,draw = none](a')at(265:7){};
\node[fill=none,draw=none,draw = none](b')at(265:6.5){};
\draw[dashed](17) -- (a');
\draw[dashed,red,very thick](17) -- (b');
\end{tikzpicture}$$

Form a triangle with vertices $v_1$,  $v_{12r-5}$ and $v_{12r-9}$. This new triangle is included in our triangle decomposition. The innermost cycle is of order $6r - 4$. \\

$$\begin{tikzpicture}
[scale=.6,auto=right,every node/.style={circle,fill=gray!30, inner sep = 1pt,draw}]
\foreach \labb/\lab /\ang in {1/$v_1$/90,2/$v_2$/75,3/$v_3$/60,4/$v_4$/45,5/$v_5$/30,6/$v_6$/15,
7/$v_{12r+1}$/105}
\node[](\labb)at(\ang:7){\tiny\lab};
\foreach \labb/\lab /\ang in {11/$v_{12r-3}$/165,12/$v_{12r-4}$/180,13/$v_{12r-5}$/195,14/$v_{12r-6}$/210,15/$v_{12r-7}$/225,10/$v_{12r-2}$/150,9/$v_{12r-1}$/135,8/$v_{12r}$/120}
\node[](\labb)at(\ang:7){\tiny\lab};
\foreach \labb/\lab /\ang in {16/$v_{12r-8}$/240,17/$v_{12r-9}$/255}
\node[](\labb)at(\ang:7){\tiny\lab};
\node[](19)at(0:7){\tiny$v_7$};
\draw(17)--(16)--(15)--(14)--(13)--(12)--(11)--(10)--(9)--(8)--(7)--(1)--(2)--(3)--(4)--(5)--(6)--(19);
\draw[red,very thick](1)to[bend right = 22](3);
\draw[red,very thick](3)to[bend right = 22](5);
\draw[red,very thick](5)to[bend right = 22](19);
\draw[red,very thick](17)to[bend right = 22](15);
\draw[red,very thick](15)to[bend right = 22](13);
\draw[red,very thick](13)to[bend right = 22](11);
\draw[red,very thick](11)to[bend right = 22](9);
\draw[red,very thick](9)to[bend right = 22](7);
\draw[red,very thick](7)to[bend left = 22](11);
\draw[red,very thick](11)to[bend right = 22](1);
\node[fill=none,draw=none,draw = none](a)at(-10:7){};
\node[fill=none,draw=none,draw = none](b)at(-10:6.5){};
\draw[dashed](19) -- (a);
\draw[dashed,red,very thick](19) -- (b);
\node[fill=none,draw=none,draw = none](a')at(265:7){};
\node[fill=none,draw=none,draw = none](b')at(265:6.5){};
\draw[dashed](17) -- (a');
\draw[dashed,red,very thick](17) -- (b');
\draw[very thick, blue] (1) to[bend left = 22] (13);
\draw[very thick, blue] (13) to[bend left =22] (17);
\draw[very thick, blue] (17) -- (1);
\end{tikzpicture}$$

Connect every other vertex in the innermost cycle, beginning at $v_1$. This will form a cycle of length $3r - 1$ where no edges have been used in the triangle decomposition of the graph. Therefore, by the induction hypothesis, additional edges can be added within this cycle to form a graph $G$ where $\eps_\Delta(G) = 1$. \\

$$\begin{tikzpicture}
[scale=.6,auto=right,every node/.style={circle,fill=gray!30, inner sep = 1pt,draw}]
\foreach \labb/\lab /\ang in {1/$v_1$/90,2/$v_2$/75,3/$v_3$/60,4/$v_4$/45,5/$v_5$/30,6/$v_6$/15,
7/$v_{12r+1}$/105}
\node[](\labb)at(\ang:7){\tiny\lab};
\foreach \labb/\lab /\ang in {11/$v_{12r-3}$/165,12/$v_{12r-4}$/180,13/$v_{12r-5}$/195,14/$v_{12r-6}$/210,15/$v_{12r-7}$/225,10/$v_{12r-2}$/150,9/$v_{12r-1}$/135,8/$v_{12r}$/120}
\node[](\labb)at(\ang:7){\tiny\lab};
\foreach \labb/\lab /\ang in {16/$v_{12r-8}$/240,17/$v_{12r-9}$/255}
\node[](\labb)at(\ang:7){\tiny\lab};
\node[](19)at(0:7){\tiny$v_7$};
\draw(17)--(16)--(15)--(14)--(13)--(12)--(11)--(10)--(9)--(8)--(7)--(1)--(2)--(3)--(4)--(5)--(6)--(19);
\draw[red,very thick](1)to[bend right = 22](3);
\draw[red,very thick](3)to[bend right = 22](5);
\draw[red,very thick](5)to[bend right = 22](19);
\draw[red,very thick](17)to[bend right = 22](15);
\draw[red,very thick](15)to[bend right = 22](13);
\draw[red,very thick](13)to[bend right = 22](11);
\draw[red,very thick](11)to[bend right = 22](9);
\draw[red,very thick](9)to[bend right = 22](7);
\draw[red,very thick](7)to[bend left = 22](11);
\draw[red,very thick](11)to[bend right = 22](1);
\node[fill=none,draw=none,draw = none](a)at(-10:7){};
\node[fill=none,draw=none,draw = none](b)at(-10:6.5){};
\draw[dashed](19) -- (a);
\draw[dashed,red,very thick](19) -- (b);
\node[fill=none,draw=none,draw = none](a')at(265:7){};
\node[fill=none,draw=none,draw = none](b')at(265:6.5){};
\draw[dashed](17) -- (a');
\draw[dashed,red,very thick](17) -- (b');
\draw[very thick, blue] (1) to[bend left = 22] (13);
\draw[very thick, blue] (13) to[bend left =22] (17);
\draw[very thick, blue] (17) -- (1);
\node[fill=none,draw=none,draw=none](c')at(275:6){};
\draw[dashed](17) -- (a');
\node[fill=none,draw=none,draw=none](c)at(-10:6){};
\draw[dashed,red,very thick](17) -- (b');
\draw[dashed,cyan,very thick](1) -- (c');
\draw[dashed,cyan,very thick](5) to[bend right = 15](c);
\draw[very thick, cyan] (1)to[bend right = 17](5);
\end{tikzpicture}$$

\emph{Case 2:} Suppose $k = 4r + 1$ then $n = 12r + 4$.\\

Begin by forming a cycle $C_{12r+4}$. Label the vertices going around the cycle $v_1,v_2,...v_{12r+4}$. Form triangles around the inner edge of the cycle by connecting every second vertex with an edge, starting at $v_1$. This will add edges between $v_1$ and $v_3$, $v_3$ and $v_5$,... and, $v_{12r-1}$ and $v_1$. These triangles are included in the triangle decomposition of the graph. Notice that the innermost cycle of this graph has length $6r+2$.

$$\begin{tikzpicture}
[scale=.4,auto=right,every node/.style={circle,fill=gray!30, draw,inner sep = 2pt}]
\foreach \labb/\lab /\ang in {1/$v_1$/90, 2/$v_2$/60, 3/$v_3$/30,4/$v_4$/0, 5/$v_{12r+4}$/120,6/$v_{12r+3}$/150,7/$v_{12r+2}$/180}
{\node(\labb)at(\ang:10){\tiny\lab};}
\node(8)at(210:10){\tiny$v_{12r-3}$};
\node(9)at(-30:10){\tiny$v_5$};
\node[fill=none,draw=none,draw=none](10)at(230:9){};
\node[fill=none,draw=none,draw=none](11)at(230:10){};
\node[fill=none,draw=none,draw=none](12)at(-50:9){};
\node[fill=none,draw=none,draw=none](13)at(-50:10){};
\draw[dashed,red,very thick](10) -- (8);
\draw[dashed](9) -- (13);
\draw[dashed,red,very thick](12) -- (9);
\draw[dashed](8) -- (11);
\draw(1)--(2)--(3)--(4);
\draw(7)--(6)--(5)--(1);
\draw(7) -- (8);
\draw(4)--(9);
\draw[very thick, red] (1) -- (3);
\draw[very thick, red] (9) -- (3);
\draw[very thick, red] (8) -- (6);
\draw[very thick, red] (1) -- (6);
\end{tikzpicture}$$

Again, connect every other vertex on the inner cycle, starting at $v_1$. This forms a cycle of length $3r+1$, where no edges have been used in the triangle decomposition yet. Therefore, by the induction hypothesis, additional edges can be added within this cycle to form a graph $G$ where $\eps_\Delta(G) = 1$.

$$\begin{tikzpicture}
[scale=.4,auto=right,every node/.style={circle,fill=gray!30, draw,inner sep = 2pt}]
\foreach \labb/\lab /\ang in {1/$v_1$/90, 2/$v_2$/60, 3/$v_3$/30,4/$v_4$/0, 5/$v_{12r+4}$/120,6/$v_{12r+3}$/150,7/$v_{12r+2}$/180}
{\node(\labb)at(\ang:10){\tiny\lab};}
\node(8)at(210:10){\tiny$v_{12r-3}$};
\node(9)at(-30:10){\tiny$v_5$};
\node[fill=none,draw=none](10)at(230:9){};
\node[fill=none,draw=none](11)at(230:10){};
\node[fill=none,draw=none](15)at(230:8){};
\node[fill=none,draw=none](12)at(-50:9){};
\node[fill=none,draw=none](13)at(-50:10){};
\node[fill=none,draw=none](14)at(-50:8){};
\draw[dashed,red,very thick](10) -- (8);
\draw[dashed](9) -- (13);
\draw[dashed,red,very thick](12) -- (9);
\draw[dashed](8) -- (11);
\draw(1)--(2)--(3)--(4);
\draw(7)--(6)--(5)--(1);
\draw(7) -- (8);
\draw(4)--(9);
\draw[very thick, red] (1) -- (3);
\draw[very thick, red] (9) -- (3);
\draw[very thick, red] (8) -- (6);
\draw[very thick, red] (1) -- (6);
\draw[very thick, blue] (1) -- (9);
\draw[very thick, blue] (1) -- (8);
\draw[very thick, blue,dashed] (9) -- (14);
\draw[very thick, blue,dashed] (8) -- (15);
\end{tikzpicture}$$

\emph{Case 3:} Suppose $k = 4r+2$ then $n = 12r + 7$. \\

Begin by forming a cycle $C_{12r+7}$. Label the vertices going around the cycle $v_1,v_2,...v_{12r+7}$. Starting at $v_1$, connect every other vertex with an edge, stopping at $v_{12r+7}$. All edges in the graph, other than $v_{1}v_{12r+7}$ now lie on a triangle. To form a triangle with the edge $v_{1}v_{12r+7}$, connect $v_1$ and $v_{12r+7}$ to the vertex $v_{12r+3}$. All edges in the graph now lie on a triangle. These triangles are included in the triangle decomposition of the graph. Notice the innermost cycle of the graph now has order $6r+2$. 

$$\begin{tikzpicture}
[scale=.67,auto=right,every node/.style={circle,fill=gray!30, inner sep= 2pt,draw}]
\foreach \labb/\lab /\ang in {1/$v_1$/90,2/$v_2$/70,3/$v_3$/50,4/$v_4$/30,5/$v_5$/10,6/$v_6$/-10,7/$v_{12r+7}$/110,8/$v_{12r+6}$/130,9/$v_{12r+5}$/150,10/$v_{12r+4}$/170,11/$v_{12r+3}$/190}
\node(\labb)at(\ang:7){\tiny\lab};
\node[fill=none,draw=none](12)at(-20:7){};
\node[fill=none,draw=none](13)at(-20:6.5){};
\node[fill=none,draw=none](14)at(200:7){};
\node[fill=none,draw=none](15)at(200:6.5){};
\draw[dashed](12)--(6);
\draw[dashed,very thick, red](5)to[bend right=15](13);
\draw[dashed](11)--(14);
\draw[dashed,very thick, red](11)--(15);
\draw (11) -- (10) -- (9)--(8)--(7)--(1)--(2)--(3)--(4)--(5)--(6);
\draw[very thick, red] (1) to[bend right=15](3) ;
\draw[very thick, red] (3) to[bend right=15] (5);
\draw[very thick, red] (11) to[bend right=15] (9);
\draw[very thick, red] (9)to[bend right=15] (7);
\draw[very thick, red] (1) to[bend left=15] (11) to[bend right=15] (7);
\end{tikzpicture}$$

Again, connect every other vertex on the inner cycle, starting at $v_1$. This forms a cycle of length $3r+1$, where no edges have been used in the triangle decomposition yet. Therefore, by the induction hypothesis, additional edges can be added within this cycle to form a graph $G$ where $\eps_\Delta(G) = 1$.

$$\begin{tikzpicture}
[scale=.67,auto=right,every node/.style={circle,fill=gray!30, inner sep= 2pt,draw}]
\foreach \labb/\lab /\ang in {1/$v_1$/90,2/$v_2$/70,3/$v_3$/50,4/$v_4$/30,5/$v_5$/10,6/$v_6$/-10,7/$v_{12r+7}$/110,8/$v_{12r+6}$/130,9/$v_{12r+5}$/150,10/$v_{12r+4}$/170,11/$v_{12r+3}$/190}
\node(\labb)at(\ang:7){\tiny\lab};
\node[fill=none,draw = none](12)at(-20:7){};
\node[fill=none,draw = none](13)at(-20:6.5){};
\node[fill=none,draw = none](14)at(200:7){};
\node[fill=none,draw = none](15)at(200:6.5){};
\draw[dashed](12)--(6);
\draw[dashed,very thick, red](5)to[bend right=15](13);
\draw[dashed](11)--(14);
\draw[dashed,very thick, red](11)--(15);
\draw (11) -- (10) -- (9)--(8)--(7)--(1)--(2)--(3)--(4)--(5)--(6);
\draw[very thick, red] (1) to[bend right=15](3) ;
\draw[very thick, red] (3) to[bend right=15] (5);
\draw[very thick, red] (11) to[bend right=15] (9);
\draw[very thick, red] (9)to[bend right=15] (7);
\draw[very thick, red] (1) to[bend left=15] (11) to[bend right=15] (7);
\draw[very thick, blue](1)to[bend right =15](5);
\node[fill=none,draw=none](16)at(200:5.5){};
\draw[very thick, blue,dashed](1)to[bend left =15](16);
\node[fill=none,draw=none](17)at(-20:5.5){};
\draw[dashed,very thick, blue](5)to[bend right =15](17);
\end{tikzpicture}$$ \\~\\

\emph{Case 4:} Suppose $k = 4r+3$ then $n = 12r + 10$. \\

Begin by forming a cycle of order $C_{12r+10}$. Label the vertices going around the cycle $v_1,v_2,...v_{12r+10}$. Starting at $v_1$, connect every other vertex. Every edge in the graph now lies on a triangle. We include all of these triangles in our triangle decomposition. Notice that the innermost cycle of the graph has length $6r+5$. 

$$\begin{tikzpicture}
[scale=.6,auto=right,every node/.style={circle,fill=gray!30, inner sep = 1pt,draw}]
\foreach \labb/\lab /\ang in {1/$v_1$/90,2/$v_2$/70,3/$v_3$/50,4/$v_4$/30,5/$v_5$/10,6/$v_6$/-10,7/$v_{12r+10}$/110,8/$v_{12r+9}$/130,9/$v_{12r+8}$/150,10/$v_{12r+7}$/170,11/$v_{12r+6}$/190,12/$v_{12r+5}$/210,15/$v_{7}$/-30,13/$v_{12r+4}$/230,14/$v_{12r+3}$/250}
\node(\labb)at(\ang:7){\tiny\lab};
\draw(14)--(13)--(12)--(11)--(10)--(9)--(8)--(7)--(1)--(2)--(3)--(4)--(5)--(6)--(15);
\draw[red,very thick](1)to[bend right =15] (3);
\draw[red,very thick](3)to[bend right =15](5);
\draw[red,very thick](5)to[bend right =15](15);
\draw[red,very thick](14)to[bend right =15](12);
\draw[red,very thick](12)to[bend right =15](10);
\draw[red,very thick](10)to[bend right =15](8);
\draw[red,very thick](8)to[bend right =15](1);
\node[fill=none, draw = none](16)at(265:7){};
\draw[dashed](14)--(16);
\node[fill=none, draw = none](17)at(265:6){};
\draw[dashed,red,very thick](14)--(17);
\node[fill=none, draw = none](18)at(-40:7){};
\draw[dashed](15)--(18);
\node[fill=none, draw = none](19)at(-40:6){};
\draw[dashed,red,very thick](15)--(19);
\end{tikzpicture}$$
Next, form a triangle with vertices $v_1$, $v_{12r+7}$, and $v_{12r+5}$. This new triangle is also part of the triangle decomposition. Notice the innermost cycle is now of order $6r + 2$. 

$$\begin{tikzpicture}
[scale=.6,auto=right,every node/.style={circle,fill=gray!30, inner sep = 1pt,draw}]
\foreach \labb/\lab /\ang in {1/$v_1$/90,2/$v_2$/70,3/$v_3$/50,4/$v_4$/30,5/$v_5$/10,6/$v_6$/-10,7/$v_{12r+10}$/110,8/$v_{12r+9}$/130,9/$v_{12r+8}$/150,10/$v_{12r+7}$/170,11/$v_{12r+6}$/190,12/$v_{12r+5}$/210,15/$v_{7}$/-30,13/$v_{12r+4}$/230,14/$v_{12r+3}$/250}
\node(\labb)at(\ang:7){\tiny\lab};
\draw(14)--(13)--(12)--(11)--(10)--(9)--(8)--(7)--(1)--(2)--(3)--(4)--(5)--(6)--(15);
\draw[red,very thick](1)to[bend right =15] (3);
\draw[red,very thick](3)to[bend right =15](5);
\draw[red,very thick](5)to[bend right =15](15);
\draw[red,very thick](14)to[bend right =15](12);
\draw[red,very thick](12)to[bend right =15](10);
\draw[red,very thick](10)to[bend right =15](8);
\draw[red,very thick](8)to[bend right =15](1);
\node[fill=none, draw = none](16)at(265:7){};
\draw[dashed](14)--(16);
\node[fill=none, draw = none](17)at(265:6){};
\draw[dashed,red,very thick](14)--(17);
\node[fill=none, draw = none](18)at(-40:7){};
\draw[dashed](15)--(18);
\node[fill=none, draw = none](19)at(-40:6){};
\draw[dashed,red,very thick](15)--(19);
\node[fill=none, draw = none](18)at(-40:7){};
\draw[dashed](15)--(18);
\node[fill=none, draw = none](19)at(-40:6){};
\draw[very thick, blue](14)to[bend left =0](1);
\draw[very thick, blue](1)to[bend left =5](10);
\draw[very thick, blue](10)to[bend left =5](14);
\end{tikzpicture}$$
Again, connect every other vertex of the inner cycle starting at $v_1$. This forms a cycle of length $3r + 1$ where no edges have been used in the triangle decomposition yet. As $r < k$, the result follows from the induction hypothesis. 
$$\begin{tikzpicture}
[scale=.6,auto=right,every node/.style={circle,fill=gray!30, inner sep = 1pt,draw}]
\foreach \labb/\lab /\ang in {1/$v_1$/90,2/$v_2$/70,3/$v_3$/50,4/$v_4$/30,5/$v_5$/10,6/$v_6$/-10,7/$v_{12r+10}$/110,8/$v_{12r+9}$/130,9/$v_{12r+8}$/150,10/$v_{12r+7}$/170,11/$v_{12r+6}$/190,12/$v_{12r+5}$/210,15/$v_{7}$/-30,13/$v_{12r+4}$/230,14/$v_{12r+3}$/250}
\node(\labb)at(\ang:7){\tiny\lab};
\draw(14)--(13)--(12)--(11)--(10)--(9)--(8)--(7)--(1)--(2)--(3)--(4)--(5)--(6)--(15);
\draw[red,very thick](1)to[bend right =15] (3);
\draw[red,very thick](3)to[bend right =15](5);
\draw[red,very thick](5)to[bend right =15](15);
\draw[red,very thick](14)to[bend right =15](12);
\draw[red,very thick](12)to[bend right =15](10);
\draw[red,very thick](10)to[bend right =15](8);
\draw[red,very thick](8)to[bend right =15](1);
\node[fill=none, draw = none](16)at(265:7){};
\draw[dashed](14)--(16);
\node[fill=none, draw = none](17)at(265:6){};
\draw[dashed,red,very thick](14)--(17);
\node[fill=none, draw = none](18)at(-40:7){};
\draw[dashed](15)--(18);
\node[fill=none, draw = none](19)at(-40:6){};
\draw[dashed,red,very thick](15)--(19);
\node[fill=none, draw = none](18)at(-40:7){};
\draw[dashed](15)--(18);
\node[fill=none, draw = none](19)at(-40:6){};
\draw[very thick, blue](14)to[bend left =0](1);
\draw[very thick, blue](1)to[bend left =5](10);
\draw[very thick, blue](10)to[bend left =5](14);
\draw[very thick, cyan] (1) to[bend right =10](5);
\draw[very thick, cyan, dashed](5)to[bend right =10](19);
\draw[very thick, cyan, dashed](1)to[bend left =10](17);
\end{tikzpicture}$$

\end{proof}

\textbf{Theorem 7.} \emph{ For $n \geq 3$, $n \equiv 2 \mod 3$ if and only if $\eps_{\Delta}(\mathcal{OP}_{n}) = 2$.}
\begin{proof}

Suppose first that $\eps_{\Delta}(\mathcal{OP}_{n}) = 2$. Then, there exists $G \in \mathcal{OP}_n$ such that $\eps_{\Delta}(G) = 2$. Let us fix such a $G$. Then $E(G) + 2 \equiv 0 \text{ (mod 3)}$. Therefore, $2n -3 + 2 \equiv 0 \text{ (mod 3)}$. This implies $n \equiv 2 \text{ (mod 3)}$, as desired.  \\

Conversely, suppose $n \equiv 2 \text{ (mod 3)}$. We will show there exists some graph $G \in \mathcal{OP}_n$ where $\eps_\Delta(G) = 2$. It is convenient to look at four cases, one for each congruence class mod 4. We proceed by induction on $n$. \\

First, consider the following graphs on $n$ vertices, where $n = 5, 8, 11, 14$. The order these graphs represents the congruence classes mod 4. We include an example of a graph $G$ of order 17 where $\eps_\Delta(G) = 2$, as the construction in our induction step does not work for order 17. \\

$$n = 5: \\
\begin{tikzpicture}
[scale=.75,auto=right,every node/.style={circle,fill=gray!30, draw}]
\foreach \lab/\ang in {a/90,b/162,c/234,d/306,e/378}
{\node(\lab) at(\ang:2){};}
\draw[very thick, red](a) -- (b) -- (c) -- (a);
\draw[very thick, blue](c) -- (d) -- (e) -- (c);
\draw[very thick, green](a) -- (e);
\draw[very thick, green](a) to[out = 280, in = 60] (c); 
\draw[very thick, green](e) to[in = 45, out = 190] (c); 
\end{tikzpicture}\hspace{.5cm}
n = 8: \\
\begin{tikzpicture}
[scale=1,auto=right,every node/.style={circle,fill=gray!30, draw}]
\foreach \lab/\ang in {a/0,b/45,c/90,d/135,e/180,f/225,g/270,h/315}
{\node(\lab) at(\ang:2){};}
\draw[very thick, red](a) -- (b) --(c)--(a);
\draw[very thick, green](e) -- (d) --(c)--(e);
\draw[very thick, blue](e) -- (f) --(g)--(e);
\draw[very thick, orange](a) -- (h) --(g)--(a);
\draw[very thick, pink](a) -- (e);
\draw[very thick, pink](a) to[out = 155, in = 285] (c);
\draw[very thick, pink](c) to[out = 255, in = 25] (e);
\end{tikzpicture}$$

$$n = 11:\\
\begin{tikzpicture}
[scale=1,auto=right,every node/.style={circle,fill=gray!30, draw}]
\foreach \lab/\ang in {a/0,b/32.72,c/65.44,d/98.17,e/130.89,f/163.61,g/196.33,h/229.05,i/261.77,j/294.49,k/327.21}
{\node(\lab) at(\ang:3){};}
\draw[very thick, red](a) -- (b) --(c)--(a);
\draw[very thick, green](e) -- (d) --(c)--(e);
\draw[very thick, blue](e) -- (f) --(g)--(e);
\draw[very thick, orange](g) -- (h) -- (i) -- (g);
\draw[very thick, pink](k) -- (j) -- (i) -- (k);
\draw[very thick, cyan](a) -- (k) -- (e) -- (a);
\draw[very thick, brown](k) -- (g);
\draw[very thick, brown](k) to[out=160,in=300] (e);
\draw[very thick, brown](g) to[in=280, out=45] (e);
\end{tikzpicture} \hspace{.5cm}
n = 14:\\
\begin{tikzpicture}
[scale=1,auto=right,every node/.style={circle,fill=gray!30, draw}]
\foreach \lab/\ang in {a/0,b/25.714,c/51.43,d/77.1,e/102.86,f/128.57,g/154.28,h/180,i/205.71,j/231.42,k/257.14,l/282.85,m/308.57,n/334.28}
{\node(\lab) at(\ang:4){};}
\draw[very thick, red](a) -- (b) --(c)--(a);
\draw[very thick, green](e) -- (d) --(c)--(e);
\draw[very thick, blue](e) -- (f) --(g)--(e);
\draw[very thick, orange](g) -- (h) -- (i) -- (g);
\draw[very thick, pink](k) -- (j) -- (i) -- (k);
\draw[very thick, brown](k) -- (l) -- (m) -- (k);
\draw[very thick, cyan](m) -- (n) -- (a) -- (m);
\draw[very thick, purple](a) -- (e) -- (i) -- (a);
\draw[very thick, black](a) -- (k);
\draw[very thick, black] (k) to[out= 130, in= 340] (i);
\draw[very thick, black] (i) to[out= 0, in= 205] (a);
\end{tikzpicture}$$

$$n = 17: \\\begin{tikzpicture}
[scale=.7,auto=right,every node/.style={circle,fill=gray!30, draw}]
\foreach \lab/\ang in {a/0,b/21.18,c/42.36,d/63.54,e/84.72,f/105.9,g/127.08,h/148.26,i/169.44,j/190.62,k/211.8,l/232.98,m/254.156,n/275.53,o/296.52,p/317.7,q/338.876}
{\node(\lab) at(\ang:4.5){};}
\draw[very thick, red](a)--(b)--(c)--(a);
\draw[very thick, blue](e)--(d)--(c)--(e);
\draw[very thick, green](e)--(f)--(g)--(e);
\draw[very thick, black](g)--(h)--(i)--(g);
\draw[very thick, pink](i)--(j)--(k)--(i);
\draw[very thick, brown](k)--(l)--(m)--(k);
\draw[very thick, yellow](m)--(n)--(o)--(m);
\draw[very thick, orange](o)--(p)--(q)--(o);
\draw[very thick, cyan](q)--(a)--(e)--(q);
\draw[very thick, purple](q)--(g)--(k)--(q);
\draw[very thick, violet](q)--(m);
\draw[very thick, violet](m)to[in=340,out=120](k);
\draw[very thick, violet](q)to[in=355,out=200](k);
\end{tikzpicture}$$

Next, assume we there exists a maximal outerplanar graph $G$ of order $3(k-1) + 2$ where $\eps_\Delta(G) = 2$, for some $k \geq 2$. Recall that all maximal outerplanar graphs are Hamiltonian, so this assumption is equivalent to saying the inside of any cycle of length $3(k-1) + 2$ can be triangulated to form a graph $G$ where $\eps_\Delta(G) = 2$. \\

\emph{Case 1:} Suppose $k = 4r$ then $n = 12r + 2$ \\

Begin by forming a cycle of order $C_{12r+2}$. Label the vertices going around the cycle $v_1,v_2,...v_{12r+2}$. Starting at $v_1$, connect every other vertex. Every edge in the graph now lies on a triangle. We include all of these triangles in our triangle decomposition. Notice that the innermost cycle of the graph has length $6r+1$. 

$$\begin{tikzpicture}
[scale=.6,auto=right,every node/.style={circle,fill=gray!30, inner sep = 1pt,draw}]
\foreach \labb/\lab /\ang in {1/$v_1$/90,2/$v_2$/70,3/$v_3$/50,4/$v_4$/30,5/$v_5$/10,6/$v_6$/-10,7/$v_{12r+2}$/110,8/$v_{12r+1}$/130,9/$v_{12r}$/150,10/$v_{12r-1}$/170,11/$v_{12r-2}$/190,12/$v_{12r-3}$/210,15/$v_{7}$/-30,13/$v_{12r-4}$/230,14/$v_{12r-5}$/250}
\node(\labb)at(\ang:7){\tiny\lab};
\draw(14)--(13)--(12)--(11)--(10)--(9)--(8)--(7)--(1)--(2)--(3)--(4)--(5)--(6)--(15);
\draw[red,very thick](1)to[bend right =15] (3);
\draw[red,very thick](3)to[bend right =15](5);
\draw[red,very thick](5)to[bend right =15](15);
\draw[red,very thick](14)to[bend right =15](12);
\draw[red,very thick](12)to[bend right =15](10);
\draw[red,very thick](10)to[bend right =15](8);
\draw[red,very thick](8)to[bend right =15](1);
\node[fill=none, draw = none](16)at(265:7){};
\draw[dashed](14)--(16);
\node[fill=none, draw = none](17)at(265:6){};
\draw[dashed,red,very thick](14)--(17);
\node[fill=none, draw = none](18)at(-40:7){};
\draw[dashed](15)--(18);
\node[fill=none, draw = none](19)at(-40:6){};
\draw[dashed,red,very thick](15)--(19);
\end{tikzpicture}$$

Next, form a triangle with vertices $v_1$, $v_{12r-1}$, and $v_{12r-3}$. This new triangle is also part of the triangle decomposition. Notice the innermost cycle is now of order $6r - 2$. 

$$\begin{tikzpicture}
[scale=.6,auto=right,every node/.style={circle,fill=gray!30, inner sep = 1pt,draw}]
\foreach \labb/\lab /\ang in {1/$v_1$/90,2/$v_2$/70,3/$v_3$/50,4/$v_4$/30,5/$v_5$/10,6/$v_6$/-10,7/$v_{12r+2}$/110,8/$v_{12r+1}$/130,9/$v_{12r}$/150,10/$v_{12r-1}$/170,11/$v_{12r-2}$/190,12/$v_{12r-3}$/210,15/$v_{7}$/-30,13/$v_{12r-4}$/230,14/$v_{12r-5}$/250}
\node(\labb)at(\ang:7){\tiny\lab};
\draw(14)--(13)--(12)--(11)--(10)--(9)--(8)--(7)--(1)--(2)--(3)--(4)--(5)--(6)--(15);
\draw[red,very thick](1)to[bend right =15] (3);
\draw[red,very thick](3)to[bend right =15](5);
\draw[red,very thick](5)to[bend right =15](15);
\draw[red,very thick](14)to[bend right =15](12);
\draw[red,very thick](12)to[bend right =15](10);
\draw[red,very thick](10)to[bend right =15](8);
\draw[red,very thick](8)to[bend right =15](1);
\node[fill=none, draw = none](16)at(265:7){};
\draw[dashed](14)--(16);
\node[fill=none, draw = none](17)at(265:6){};
\draw[dashed,red,very thick](14)--(17);
\node[fill=none, draw = none](18)at(-40:7){};
\draw[dashed](15)--(18);
\node[fill=none, draw = none](19)at(-40:6){};
\draw[dashed,red,very thick](15)--(19);
\node[fill=none, draw = none](18)at(-40:7){};
\draw[dashed](15)--(18);
\node[fill=none, draw = none](19)at(-40:6){};
\draw[very thick, blue](14)to[bend left =0](1);
\draw[very thick, blue](1)to[bend left =5](10);
\draw[very thick, blue](10)to[bend left =5](14);
\end{tikzpicture}$$
 
 Again, connect every other vertex of the inner cycle starting at $v_1$. This forms a cycle of length $3r - 1$ where no edges have been used in the triangle decomposition yet. As $r < k$, the result follows from the induction hypothesis. 
 
 $$\begin{tikzpicture}
[scale=.6,auto=right,every node/.style={circle,fill=gray!30, inner sep = 1pt,draw}]
\foreach \labb/\lab /\ang in {1/$v_1$/90,2/$v_2$/70,3/$v_3$/50,4/$v_4$/30,5/$v_5$/10,6/$v_6$/-10,7/$v_{12r+2}$/110,8/$v_{12r+1}$/130,9/$v_{12r}$/150,10/$v_{12r-1}$/170,11/$v_{12r-2}$/190,12/$v_{12r-3}$/210,15/$v_{7}$/-30,13/$v_{12r-4}$/230,14/$v_{12r-5}$/250}
\node(\labb)at(\ang:7){\tiny\lab};
\draw(14)--(13)--(12)--(11)--(10)--(9)--(8)--(7)--(1)--(2)--(3)--(4)--(5)--(6)--(15);
\draw[red,very thick](1)to[bend right =15] (3);
\draw[red,very thick](3)to[bend right =15](5);
\draw[red,very thick](5)to[bend right =15](15);
\draw[red,very thick](14)to[bend right =15](12);
\draw[red,very thick](12)to[bend right =15](10);
\draw[red,very thick](10)to[bend right =15](8);
\draw[red,very thick](8)to[bend right =15](1);
\node[fill=none, draw = none](16)at(265:7){};
\draw[dashed](14)--(16);
\node[fill=none, draw = none](17)at(265:6){};
\draw[dashed,red,very thick](14)--(17);
\node[fill=none, draw = none](18)at(-40:7){};
\draw[dashed](15)--(18);
\node[fill=none, draw = none](19)at(-40:6){};
\draw[dashed,red,very thick](15)--(19);
\node[fill=none, draw = none](18)at(-40:7){};
\draw[dashed](15)--(18);
\node[fill=none, draw = none](19)at(-40:6){};
\draw[very thick, blue](14)to[bend left =0](1);
\draw[very thick, blue](1)to[bend left =5](10);
\draw[very thick, blue](10)to[bend left =5](14);
\draw[very thick, cyan] (1) to[bend right =10](5);
\draw[very thick, cyan, dashed](5)to[bend right =10](19);
\draw[very thick, cyan, dashed](1)to[bend left =10](17);
\end{tikzpicture}$$

\emph{Case 2:} Suppose $k = 4r+1$ then $n = 12r+ 5$ (from our base case $r \geq 2$) \\

Begin by forming a cycle $C_{12r+5}$. Connect every other vertex in the graph beginning at $v_1$ until $v_{12r+5}$. Every edge other than $v_1v_{12r+5}$ lies in a triangle. Form a triangle between $v_{1}$, $v_{12r+5}$ and $v_{12r+1}$ by adding an edge between $v_1$ and $v_{12r-3}$ and an edge between $v_{12r+5}$ and $v_{12r+1}$. The inner-cycle of the graph has order $6r+1$. 

$$\begin{tikzpicture}
[scale=.7,auto=right,every node/.style={circle,fill=gray!30, inner sep = 1pt,draw}]
\foreach \labb/\lab /\ang in {1/$v_1$/90,2/$v_2$/75,3/$v_3$/60,4/$v_4$/45,5/$v_5$/30,6/$v_6$/15,
7/$v_{12r+5}$/105,8/$v_{12r+4}$/120,9/$v_{12r+3}$/135,10/$v_{12r+2}$/150}
\node(\labb)at(\ang:7){\tiny\lab};
\foreach \labb/\lab /\ang in {11/$v_{12r-1}$/165,12/$v_{12r-2}$/180,13/$v_{12r-3}$/195,14/$v_{12r-4}$/210,15/$v_{12r-5}$/225}
\node(\labb)at(\ang:7){\tiny\lab};
\foreach \labb/\lab /\ang in {16/$v_{12r-6}$/240,17/$v_{12r-7}$/255}
\node(\labb)at(\ang:7){\tiny\lab};
\node(19)at(0:7){\tiny$v_7$};
\draw(17)--(16)--(15)--(14)--(13)--(12)--(11)--(10)--(9)--(8)--(7)--(1)--(2)--(3)--(4)--(5)--(6)--(19);
\draw[red,very thick](1)to[bend right = 22](3);
\draw[red,very thick](3)to[bend right = 22](5);
\draw[red,very thick](5)to[bend right = 22](19);
\draw[red,very thick](17)to[bend right = 22](15);
\draw[red,very thick](15)to[bend right = 22](13);
\draw[red,very thick](13)to[bend right = 22](11);
\draw[red,very thick](11)to[bend right = 22](9);
\draw[red,very thick](9)to[bend right = 22](7);
\draw[red,very thick](7)to[bend left = 22](11);
\draw[red,very thick](11)to[bend right = 22](1);
\node[fill=none,draw=none,draw = none](a)at(-10:7){};
\node[fill=none,draw=none,draw = none](b)at(-10:6.5){};
\draw[dashed](19) -- (a);
\draw[dashed,red,very thick](19) -- (b);
\node[fill=none,draw=none,draw = none](a')at(265:7){};
\node[fill=none,draw=none,draw = none](b')at(265:6.5){};
\draw[dashed](17) -- (a');
\draw[dashed,red,very thick](17) -- (b');
\end{tikzpicture}$$

Form a triangle with vertices $v_1$, $v_{12r-3}$ and $v_{12r-7}$. This new triangle is included in our triangle decomposition. The innermost cycle is of order $6r - 2$. \\

$$\begin{tikzpicture}
[scale=.7,auto=right,every node/.style={circle,fill=gray!30, inner sep = 1pt,draw}]
\foreach \labb/\lab /\ang in {1/$v_1$/90,2/$v_2$/75,3/$v_3$/60,4/$v_4$/45,5/$v_5$/30,6/$v_6$/15,
7/$v_{12r+5}$/105,8/$v_{12r+4}$/120,9/$v_{12r+3}$/135,10/$v_{12r+2}$/150}
\node(\labb)at(\ang:7){\tiny\lab};
\foreach \labb/\lab /\ang in {11/$v_{12r-1}$/165,12/$v_{12r-2}$/180,13/$v_{12r-3}$/195,14/$v_{12r-4}$/210,15/$v_{12r-5}$/225}
\node(\labb)at(\ang:7){\tiny\lab};
\foreach \labb/\lab /\ang in {16/$v_{12r-6}$/240,17/$v_{12r-7}$/255}
\node(\labb)at(\ang:7){\tiny\lab};
\node(19)at(0:7){\tiny$v_7$};
\draw(17)--(16)--(15)--(14)--(13)--(12)--(11)--(10)--(9)--(8)--(7)--(1)--(2)--(3)--(4)--(5)--(6)--(19);
\draw[red,very thick](1)to[bend right = 22](3);
\draw[red,very thick](3)to[bend right = 22](5);
\draw[red,very thick](5)to[bend right = 22](19);
\draw[red,very thick](17)to[bend right = 22](15);
\draw[red,very thick](15)to[bend right = 22](13);
\draw[red,very thick](13)to[bend right = 22](11);
\draw[red,very thick](11)to[bend right = 22](9);
\draw[red,very thick](9)to[bend right = 22](7);
\draw[red,very thick](7)to[bend left = 22](11);
\draw[red,very thick](11)to[bend right = 22](1);
\node[fill=none,draw = none](a)at(-10:7){};
\node[fill=none,draw = none](b)at(-10:6.5){};
\draw[dashed](19) -- (a);
\draw[dashed,red,very thick](19) -- (b);
\node[fill=none,draw = none](a')at(265:7){};
\node[fill=none,draw = none](b')at(265:6.5){};
\draw[dashed](17) -- (a');
\draw[dashed,red,very thick](17) -- (b');
\draw[very thick, blue] (1) to[bend left = 22] (13);
\draw[very thick, blue] (13) to[bend left =22] (17);
\draw[very thick, blue] (17) -- (1);
\end{tikzpicture}$$

Connect every other vertex in the innermost cycle, beginning at $v_1$. This will form a cycle of length $3r - 1$ where no edges have been used in the triangle decomposition of the graph. As $r < k$, the result follows from the induction hypothesis. 

$$\begin{tikzpicture}
[scale=.7,auto=right,every node/.style={circle,fill=gray!30, inner sep = 1pt,draw}]
\foreach \labb/\lab /\ang in {1/$v_1$/90,2/$v_2$/75,3/$v_3$/60,4/$v_4$/45,5/$v_5$/30,6/$v_6$/15,
7/$v_{12r+5}$/105,8/$v_{12r+4}$/120,9/$v_{12r+3}$/135,10/$v_{12r+2}$/150}
\node(\labb)at(\ang:7){\tiny\lab};
\foreach \labb/\lab /\ang in {11/$v_{12r-1}$/165,12/$v_{12r-2}$/180,13/$v_{12r-3}$/195,14/$v_{12r-4}$/210,15/$v_{12r-5}$/225}
\node(\labb)at(\ang:7){\tiny\lab};
\foreach \labb/\lab /\ang in {16/$v_{12r-6}$/240,17/$v_{12r-7}$/255}
\node(\labb)at(\ang:7){\tiny\lab};
\node(19)at(0:7){\tiny$v_7$};
\draw(17)--(16)--(15)--(14)--(13)--(12)--(11)--(10)--(9)--(8)--(7)--(1)--(2)--(3)--(4)--(5)--(6)--(19);
\draw[red,very thick](1)to[bend right = 22](3);
\draw[red,very thick](3)to[bend right = 22](5);
\draw[red,very thick](5)to[bend right = 22](19);
\draw[red,very thick](17)to[bend right = 22](15);
\draw[red,very thick](15)to[bend right = 22](13);
\draw[red,very thick](13)to[bend right = 22](11);
\draw[red,very thick](11)to[bend right = 22](9);
\draw[red,very thick](9)to[bend right = 22](7);
\draw[red,very thick](7)to[bend left = 22](11);
\draw[red,very thick](11)to[bend right = 22](1);
\node[fill=none,draw = none](a)at(-10:7){};
\node[fill=none,draw = none](b)at(-10:6.5){};
\draw[dashed](19) -- (a);
\draw[dashed,red,very thick](19) -- (b);
\node[fill=none,draw = none](a')at(265:7){};
\node[fill=none,draw = none](b')at(265:6.5){};
\draw[dashed](17) -- (a');
\draw[dashed,red,very thick](17) -- (b');
\draw[very thick, blue] (1) to[bend left = 22] (13);
\draw[very thick, blue] (13) to[bend left =22] (17);
\draw[very thick, blue] (17) -- (1);
\node[fill=none,draw=none](c')at(275:6){};
\draw[dashed](17) -- (a');
\node[fill=none,draw=none](c)at(-10:6){};
\draw[dashed,red,very thick](17) -- (b');
\draw[dashed,cyan,very thick](1) -- (c');
\draw[dashed,cyan,very thick](5) to[bend right = 15](c);
\draw[very thick, cyan] (1)to[bend right = 17](5);
\end{tikzpicture}$$

\emph{Case 3:} Suppose $k = 4r + 2$ then $n = 12r + 8$. \\

Begin by forming a cycle $C_{12r + 8}$. Label the vertices going around the cycle $v_1,v_2,...v_{12r + 8}$. Form triangles around the inner edge of the cycle by connecting every second vertex with an edge, starting at $v_1$. This will add edges between $v_1$ and $v_3$, $v_3$ and $v_5$,... and, $v_{12r-1}$ and $v_1$. These triangles are included in the triangle decomposition of the graph. Notice that the innermost cycle of this graph has length $6r + 4$. 

$$\begin{tikzpicture}
[scale=.4,auto=right,every node/.style={circle,fill=gray!30, draw,inner sep = 2pt}]
\foreach \labb/\lab /\ang in {1/$v_1$/90, 2/$v_2$/60, 3/$v_3$/30,4/$v_4$/0, 5/$v_{12r+8}$/120,6/$v_{12r+7}$/150,7/$v_{12r+6}$/180}
{\node(\labb)at(\ang:10){\tiny\lab};}
\node(8)at(210:10){\tiny$v_{12r-3}$};
\node(9)at(-30:10){\tiny$v_5$};
\node[fill=none,draw=none](10)at(230:9){};
\node[fill=none,draw=none](11)at(230:10){};
\node[fill=none,draw=none](12)at(-50:9){};
\node[fill=none,draw=none](13)at(-50:10){};
\draw[dashed,red,very thick](10) -- (8);
\draw[dashed](9) -- (13);
\draw[dashed,red,very thick](12) -- (9);
\draw[dashed](8) -- (11);
\draw(1)--(2)--(3)--(4);
\draw(7)--(6)--(5)--(1);
\draw(7) -- (8);
\draw(4)--(9);
\draw[very thick, red] (1) -- (3);
\draw[very thick, red] (9) -- (3);
\draw[very thick, red] (8) -- (6);
\draw[very thick, red] (1) -- (6);
\end{tikzpicture}$$

Again, connect every other vertex on the inner cycle, starting at $v_1$. This forms a cycle of length $3r + 2$, where no edges have been used in the triangle decomposition yet. Therefore, the result follows by the induction hypothesis.  

$$\begin{tikzpicture}
[scale=.4,auto=right,every node/.style={circle,fill=gray!30, draw,inner sep = 2pt}]
\foreach \labb/\lab /\ang in {1/$v_1$/90, 2/$v_2$/60, 3/$v_3$/30,4/$v_4$/0, 5/$v_{12r+8}$/120,6/$v_{12r+7}$/150,7/$v_{12r+6}$/180}
{\node(\labb)at(\ang:10){\tiny\lab};}
\node(8)at(210:10){\tiny$v_{12r-3}$};
\node(9)at(-30:10){\tiny$v_5$};
\node[fill=none,draw=none](10)at(230:9){};
\node[fill=none,draw=none](11)at(230:10){};
\node[fill=none,draw=none](15)at(230:8){};
\node[fill=none,draw=none](12)at(-50:9){};
\node[fill=none,draw=none](13)at(-50:10){};
\node[fill=none,draw=none](14)at(-50:8){};
\draw[dashed,red,very thick](10) -- (8);
\draw[dashed](9) -- (13);
\draw[dashed,red,very thick](12) -- (9);
\draw[dashed](8) -- (11);
\draw(1)--(2)--(3)--(4);
\draw(7)--(6)--(5)--(1);
\draw(7) -- (8);
\draw(4)--(9);
\draw[very thick, red] (1) -- (3);
\draw[very thick, red] (9) -- (3);
\draw[very thick, red] (8) -- (6);
\draw[very thick, red] (1) -- (6);
\draw[very thick, blue] (1) -- (9);
\draw[very thick, blue] (1) -- (8);
\draw[very thick, blue,dashed] (9) -- (14);
\draw[very thick, blue,dashed] (8) -- (15);
\end{tikzpicture}$$

\emph{Case 4:} Suppose $k = 4r + 3$ then $n = 12r + 11$. \\

Begin by forming a cycle $C_{12r+11}$. Label the vertices going around the cycle $v_1,v_2,...v_{12r+11}$. Starting at $v_1$, connect every other vertex with an edge, stopping at $v_{12r+11}$. All edges in the graph, other than $v_{1}v_{12r+11}$ now lie on a triangle. To form a triangle with the edge $v_{1}v_{12r+11}$, connect $v_1$ and $v_{12r+11}$ to the vertex $v_{12r_7}$. All edges in the graph now lie on a triangle. These triangles are included in the triangle decomposition of the graph. Notice the innermost cycle of the graph now has order $6r + 4$. 

$$\begin{tikzpicture}
[scale=.7,auto=right,every node/.style={circle,fill=gray!30, inner sep= 2pt,draw}]
\foreach \labb/\lab /\ang in {1/$v_1$/90,2/$v_2$/70,3/$v_3$/50,4/$v_4$/30,5/$v_5$/10,6/$v_6$/-10,7/$v_{12r+11}$/110,8/$v_{12r+10}$/130,9/$v_{12r+9}$/150,10/$v_{12r+8}$/170,11/$v_{12r+7}$/190}
\node(\labb)at(\ang:7){\tiny\lab};
\node[fill=none,draw = none](12)at(-20:7){};
\node[fill=none,draw = none](13)at(-20:6.5){};
\node[fill=none,draw = none](14)at(200:7){};
\node[fill=none,draw = none](15)at(200:6.5){};
\draw[dashed](12)--(6);
\draw[dashed,very thick, red](5)to[bend right=15](13);
\draw[dashed](11)--(14);
\draw[dashed,very thick, red](11)--(15);
\draw (11) -- (10) -- (9)--(8)--(7)--(1)--(2)--(3)--(4)--(5)--(6);
\draw[very thick, red] (1) to[bend right=15](3) ;
\draw[very thick, red] (3) to[bend right=15] (5);
\draw[very thick, red] (11) to[bend right=15] (9);
\draw[very thick, red] (9)to[bend right=15] (7);
\draw[very thick, red] (1) to[bend left=15] (11) to[bend right=15] (7);
\end{tikzpicture}$$

Again, connect every other vertex on the inner cycle, starting at $v_1$. This forms a cycle of length $3r + 2$, where no edges have been used in the triangle decomposition yet. Therefore, the result follows by the induction hypothesis. \\

$$\begin{tikzpicture}
[scale=.7,auto=right,every node/.style={circle,fill=gray!30, inner sep= 2pt,draw}]
\foreach \labb/\lab /\ang in {1/$v_1$/90,2/$v_2$/70,3/$v_3$/50,4/$v_4$/30,5/$v_5$/10,6/$v_6$/-10,7/$v_{12r+11}$/110,8/$v_{12r+10}$/130,9/$v_{12r+9}$/150,10/$v_{12r+8}$/170,11/$v_{12r+7}$/190}
\node(\labb)at(\ang:7){\tiny\lab};
\node[fill=none,draw = none](12)at(-20:7){};
\node[fill=none,draw = none](13)at(-20:6.5){};
\node[fill=none,draw = none](14)at(200:7){};
\node[fill=none,draw = none](15)at(200:6.5){};
\draw[dashed](12)--(6);
\draw[dashed,very thick, red](5)to[bend right=15](13);
\draw[dashed](11)--(14);
\draw[dashed,very thick, red](11)--(15);
\draw (11) -- (10) -- (9)--(8)--(7)--(1)--(2)--(3)--(4)--(5)--(6);
\draw[very thick, red] (1) to[bend right=15](3) ;
\draw[very thick, red] (3) to[bend right=15] (5);
\draw[very thick, red] (11) to[bend right=15] (9);
\draw[very thick, red] (9)to[bend right=15] (7);
\draw[very thick, red] (1) to[bend left=15] (11) to[bend right=15] (7);
\draw[very thick, blue](1)to[bend right =15](5);
\node[fill=none,draw=none](16)at(200:5.5){};
\draw[very thick, blue,dashed](1)to[bend left =15](16);
\node[fill=none,draw=none](17)at(-20:5.5){};
\draw[dashed,very thick, blue](5)to[bend right =15](17);
\end{tikzpicture}$$ \\~\\

\end{proof}

\textbf{Theorem 8.} \emph{Suppose $\Xi_{\Delta}(\mathcal{OP}_{n})=\varepsilon_{\Delta}(\mathcal{OP}%
_{n})+3k$ for some $k \in \Z$. Then, there exists a graph $G_r$ such that $\eps_\Delta(G_r) = \eps_\Delta(\mathcal{OP}) + 3r$ where $r = 1, 2, ... k - 1$.} 

\begin{proof}

Let us work with each conjugacy class for $n \mod 3$. \\

\textbf{Case 1:} $n \equiv 0 \mod 3$. \\

We form a graph $G_r$ where $\eps_\Delta(G_r) = n - 3r$ for some $r$. We begin by forming a $C_n$. Then, label vertices in the graph $v_1, v_2,...v_n$ going around the cycle. We connect $v_1$ to $v_2,v_3,...v_{3r+2}$. This induced subgraph $H = v_1,v_2,...v_{3r+2}$ requires $3r+2-3 = 3r - 1$ edges to form a triangle decomposition, from the proof of Theorem 7.  Next, we add one multi-edge between $v_1$ and $v_{3r+2}$. The induced subgraph of of $v_1, v_{3r+2}, v_{3r+3},..v_n$ ignoring the edge between $v_1$ and $v_{3r+2}$ included in the triangle decomposition of $H$ has order divisible by 3. Thus, a triangle decomposition can be formed without any multi-edges required by the proof of Theorem 4. Thus, $\eps_\Delta(G_r) = 3r - 1 + 1 = 3r$, as desired. \\

\textbf{Case 2:} $n \equiv 2 \mod 3$. \\

We form a graph $G_r$ where $\eps_\Delta(G_r) = 3r + 2$ for some $r$. We begin by forming a $C_n$. Label vertices in the graph $v_1, v_2,...v_n$ going around the cycle. We connect $v_1$ to $v_2,v_3,...v_{3r+2}$. This induced subgraph $H = v_1,v_2,...v_{3r+2}$ will requires $3r+2-3 = 3r - 1$ edges to form a triangle decomposition.  Next, if we add one multi-edge between $v_1$ and $v_{3r+2}$ then the induced subgraph of $v_1, v_{3r+2}, v_{3r+3},..v_n$, ignoring the edge between $v_1$ and $v_{3r+2}$ included in the triangle decomposition of $H$, will have order congruent to $1 \mod 3$. Thus, a triangle decomposition can be formed with the addition of 2 multi-edges, by Corollary 6. Thus, $\eps_\Delta(G_r) = 3r - 1 + 1 +2 = 3r + 2$, as desired. \\

\textbf{Case 3:} $n \equiv 1 \mod 3$. \\

We form a graph $G_r$ where $\eps_\Delta(G_r) = 3r + 1$ for some $r$. Form a $C_n$. Then, label vertices in the graph $v_1, v_2,...v_n$ going around the cycle. We connect $v_1$ to $v_2,v_3,...v_{3r+2}$. This induced subgraph $H = v_1,v_2,...v_{3r+2}$ will require $3r+2-3 = 3r - 1$ edges to form a triangle decomposition, by the proof of Theorem 7.  Next, we add one multi-edge between $v_1$ and $v_{3r+2}$. The induced subgraph of of $v_1, v_{3r+2}, v_{3r+3},..v_n$ ignoring the edge between $v_1$ and $v_{3r+2}$ included in the triangle decomposition of $H$, has order congruent to $2 \mod 3$. Thus, applying Corollary 7 we can add edges to this cycle to form a graph with $\eps_\Delta$ value 2. Thus, $\eps_\Delta(G_r) = 3r - 1 + 1 +1 = 3r + 1$, as desired. 
\end{proof}

\end{document}